\documentclass[final,3p,times]{elsarticle}
\usepackage{amssymb,amsmath,amsthm,epsfig,verbatim,url}
\usepackage{tikz,pgfplots,tikz-3dplot}
\usetikzlibrary{fit}
\newcommand\addvmargin[1]{
\node[fit=(current bounding box),inner ysep=#1,inner xsep=0]{};
}
\usepackage[notref,notcite]{showkeys}
\newtheorem{thm}{Theorem}[section]
\newtheorem{lem}[thm]{Lemma}
\newtheorem{rem}[thm]{Remark}
\renewcommand{\theequation}{\arabic{section}.\arabic{equation}}
\newcommand{\bRplus}{{\mathbb R}_{>0}}
\newcommand{\bRgeq}{{\mathbb R}_{\geq 0}}
\newcommand{\RZ}{{\mathbb R} \slash {\mathbb Z}}
\newcommand{\bR}{{\mathbb R}}

\newcommand{\spa}{\operatorname{span}}
\newcommand{\Gauss}{{\mathcal{K}}}
\newcommand{\doctorkappa}{\mathfrak{K}}
\newcommand{\ratio}{{\mathfrak r}}
\newcommand{\dH}[1]{\;{\rm d}{\mathcal{H}}^{#1}} % Hausdorff measure
\newcommand{\dL}[1]{\;{\rm d}{\mathcal{L}}^{#1}} % Lebesgue measure

\newcommand{\drho}{\;{\rm d}\rho}
\newcommand{\spont}{{\overline\varkappa}}
\newcommand{\Vh}{\underline{V}^h}

\newcommand{\Vhpartial}{\underline{V}^h_\partial}
\newcommand{\Vhpartialzero}{\underline{V}^h_{\partial_0}}
\newcommand{\Vpartial}{\underline{V}_\partial}
\newcommand{\Vpartialzero}{\underline{V}_{\partial_0}}
\newcommand{\nabS}{\nabla_{\!\mathcal{S}}}

\newcommand{\id}{\rm id}
\newcommand{\deldel}[1]{\frac{\delta}{{\delta}#1}}
\newcommand{\dd}[1]{\frac{\rm d}{{\rm d}#1}}
\newcommand{\ddt}{\dd{t}}

\newcommand{\ek}{e}
\newcommand{\ttau}{\Delta t}
\newcommand{\sliprho}{\widehat\varrho_{\partial\mathcal{S}}}
\newcommand{\BGNsd}{\mathcal{E}}
\newcommand{\BGNsdstab}{\mathcal{F}}
\newcommand{\BGNintstab}{\mathcal{I}}
\newcommand{\BGNwf}{\mathcal{W}}
\newcommand{\normal}{{\rm n}}
\def\epsilon{\varepsilon} 

\newcommand{\errorXx}{\|\Gamma - \Gamma^h\|_{L^\infty}}
 % REMOVE IF POSSIBLE
\newcommand{\pol}{\vec}  %%% COMMENT THIS LINE IF \pol DEFINES VECTOR ARROW

\begin{document}
\begin{frontmatter}
\title{%\texttt{\jobname} \textbf{REVISION} \quad {\rm \mydate} \\
Finite element methods for fourth order
axisymmetric geometric evolution equations
}
\author[1]{John W. Barrett}
\address[1]{Department of Mathematics, Imperial College London, 
London, SW7 2AZ, UK}
\ead{j.barrett@imperial.ac.uk}
\author[2]{Harald Garcke}
\address[2]{Fakult{\"a}t f{\"u}r Mathematik, Universit{\"a}t Regensburg, 
93040 Regensburg, Germany}
\ead{harald.garcke@ur.de}
\author[1]{Robert N\"urnberg\corref{cor1}}
\ead{robert.nurnberg@imperial.ac.uk}
\cortext[cor1]{Corresponding author, Telephone +44 207594857}

\begin{abstract}
Fourth order curvature driven interface evolution equations frequently
appear in the natural sciences. Often axisymmetric geometries are of
interest, and in this situation numerical computations are much more
efficient. We will introduce and analyze several new finite element
schemes for fourth order geometric evolution equations in
an axisymmetric setting,
and for selected schemes 
we will show existence, uniqueness and stability results. 
The presented schemes have very good mesh and stability properties, as will be
demonstrated by several numerical examples.
\end{abstract} 

\begin{keyword} surface diffusion, Willmore flow, Helfrich flow,
finite elements, axisymmetry, tangential movement. 
\end{keyword}

\end{frontmatter}

\setcounter{equation}{0}
\section{Introduction} \label{sec:intro}

The motion of interfaces driven by a law for the normal velocity, 
which involves the surface Laplacian of curvature quantities, 
plays an important role in many applications. 
The resulting differential equations are parabolic and of fourth order.
Prominent examples are the surface diffusion flow, which models phase changes 
due to diffusion along an interface, see \cite{Mullins57,CahnT94}. 
In this evolution
law the normal velocity of the interface is given by the surface Laplacian 
of the mean curvature.

Typical membrane energies involve the curvature of the membrane. 
In the simplest models the Willmore functional, which is just the 
integrated squared mean curvature, is an appropriate energy,
see \cite{Willmore65}.
Recently, in particular, biomembranes have been the focus of research and in this case more complex energies, like the Canham-Helfrich energy, are of interest, see \cite{Canham70,Helfrich73,Seifert97} for details.
Taking the $L^2$--gradient flow of such an energy also leads to a fourth order 
geometric evolution equation involving the surface Laplacian of the mean 
curvature and cubic nonlinearities in the curvature, 
see \cite{Simonett01,KuwertS02}.
In the case of biological membranes also more complex laws, taking volume and
surface constraints or a coupling to fluid flow into account, are of relevance,
see \cite{willmore,nsns} and the references therein. 

In this paper
we introduce new numerical schemes for axisymmetric versions of these flows. 
This is a very relevant issue as in many situations axisymmetric shapes appear
and reducing the computations to a spatially one-dimensional problem greatly
reduces the computational complexity. Schemes for the axisymmetric problem
also have the benefit that mesh degeneracies, which for other schemes
frequently happen during
the evolution, can be avoided. We will also introduce schemes which make use of the tangential degrees of freedom in order to obtain good mesh properties. Some
of these schemes even have the property that mesh points equidistribute during the evolution.

We now specify the interface evolution laws studied in this paper in more 
detail. Let $(\mathcal{S}(t))_{t \geq 0} \subset \bR^3$ 
be a family of smooth, oriented  hypersurfaces, which we
later assume to be axisymmetric.
The mean curvature flow for $\mathcal{S}(t)$ is given by the evolution law
\begin{equation} \label{eq:mcfS}
\mathcal{V}_{\mathcal{S}} = k_m\qquad\text{ on } \mathcal{S}(t)\,,
\end{equation}
and it is the $L^2$--gradient flow for 
for the surface area.
Here $\mathcal{V}_{\mathcal{S}}$ denotes the normal velocity of 
$\mathcal{S}(t)$ in the direction of the normal $\pol\normal_{\mathcal{S}}$.
Moreover, $k_m$ is the mean curvature of $\mathcal{S}(t)$, i.e.\ the sum of the
principal curvatures of $\mathcal{S}(t)$. For the methods derived in this paper the identity
\begin{equation} \label{eq:LBid} 
\Delta_{\mathcal{S}}\,\pol\id = k_m \,
\pol\normal_{\mathcal{S}}  \qquad \text{ on } \mathcal{S}(t)
\end{equation}
will be  crucial, 
where $\Delta_{\mathcal{S}}$ is the Laplace--Beltrami operator on
$\mathcal{S}(t)$ and $\pol\id$ denotes the identity function in $\bR^3$.
A derivation of the identity (\ref{eq:LBid}) can be found in 
e.g.\ \cite{DeckelnickDE05}. 
In this paper we will consider fourth order analogues of the second order
geometric evolution equation (\ref{eq:mcfS}).

The surface diffusion flow for $\mathcal{S}(t)$ is given by the evolution law
\begin{equation} \label{eq:sdS}
\mathcal{V}_{\mathcal{S}} =- \Delta_{\mathcal{S}}\,k_m \qquad\text{on }
\mathcal{S}(t)\,.
\end{equation}
This law was introduced by Mullins, \cite{Mullins57}, 
in order to describe thermal grooving and this evolution law also has 
important applications in epitaxial growth, see e.g.\ 
\cite{GurtinJ02,BanschMN05}.

A flow combining surface diffusion and surface attachment limited
kinetics introduced in \cite{CahnT94}, and analyzed in
\cite{ElliottG97a}, is given by
\begin{equation} \label{eq:SALK}
\mathcal{V}_{\mathcal{S}} = 
- \Delta_{\mathcal{S}}\,\left(\frac1\alpha - \frac1\xi\,\Delta_{\mathcal{S}}
\right)^{-1} k_m
\quad\text{on }\ \mathcal{S}(t)\,,
\end{equation}
where $\alpha,\xi \in \bRplus$ are given parameters. This flow can
be written as
\begin{equation} \label{eq:salknew}
\mathcal{V}_{\mathcal{S}} =-\Delta_{\mathcal{S}}\, y\,, \qquad \left(
-\frac1\xi\, \Delta_{\mathcal{S}}+ \frac1\alpha \right) y =k_m 
\quad\text{on }\ \mathcal{S}(t) \,,
\end{equation}
and in the limit of fast attachment kinetics $\xi\to \infty$
and $\alpha=1$, we recover surface diffusion, (\ref{eq:sdS}). In the
limit of fast  surface diffusion  $\alpha\to\infty$
and $\xi=1$ we recover conserved mean curvature flow,
\begin{equation*} % \label{eq:nlmcf2}
\mathcal{V}_{\mathcal{S}} = k_m
- \frac{\int_{\mathcal{S}} k_m \dH{2}}{\int_{\mathcal{S}} 1 \dH{2}}
\quad\text{on }\ \mathcal{S}(t)\,,
\end{equation*}
with $\mathcal{H}^{2}$ being the surface measure. 
A discussion of these limits can be found in \cite{TaylorC94}.
Hence, for general values $\alpha$, $\xi \in \bRplus$, the intermediate flow
(\ref{eq:SALK}) interpolates between surface diffusion and 
conserved mean curvature flow,
see e.g.\ \cite{ElliottG97a} and \cite[p.\ 4282]{gflows3d} for more details.

We now define the generalized Willmore energy of the surface $\mathcal{S}(t)$ as
\begin{equation} \label{eq:W}
\tfrac12\,\int_{\mathcal{S}(t)} (k_m - \spont)^2 \dH{2} \,,
\end{equation}
where $\spont\in\bR$ is a given constant, the so-called spontaneous curvature.
On $\mathcal{S}(t)$, Willmore flow, i.e.\ the 
$L^2$--gradient flow for (\ref{eq:W}), is given by
\begin{equation}\label{eq:Willmore_flow}
\mathcal{V}_{\mathcal{S}} = -\Delta_{\mathcal{S}} \,k_m
- (k_m - \spont)\,|\nabS\,\pol\normal_{\mathcal{S}}|^2
+\tfrac{1}{2}\,(k_m - \spont)^2\,k_m % \nonumber \\ &
= -\Delta_{\mathcal{S}} \,k_m + 2\, (k_m- \spont)\,k_g
-\tfrac{1}{2}\,(k_m^2 - \spont^2)\,k_m \qquad\text{on }\ \mathcal{S}(t)\,.
\end{equation}
Here $\nabS\,\pol\normal_{\mathcal{S}}$ is the Weingarten map
and $k_g$ is the Gaussian curvature of $\mathcal{S}(t)$, i.e.\ it is the 
product of the two principal curvatures. We also consider Helfrich flow,
which is the volume and surface area preserving variant of
(\ref{eq:Willmore_flow}).

In this paper, we consider the case that $\mathcal{S}(t)$ is 
an axisymmetric surface, that is rotationally symmetric with respect to the
$x_2$--axis. We further assume that $\mathcal{S}(t)$ is made up of a single 
connected component, with or without boundary. Clearly, in the latter case 
the boundary $\partial\mathcal{S}(t)$ of $\mathcal{S}(t)$ consists of either
one or two circles that each lie within a hyperplane that is parallel to the
$x_1-x_3$--plane. For the evolving family of surfaces we allow for the
following types of boundary conditions. A boundary circle may assumed to be
fixed, it may be allowed to move vertically along the boundary of a fixed
infinite cylinder that is aligned with the axis of rotation, 
or it may be allowed to expand and shrink within a 
hyperplane that is parallel to the $x_1-x_3$--plane. 
Depending on the postulated free energy, certain
angle conditions will arise where $\mathcal{S}(t)$ meets the external 
boundary. If the free energy is just surface area,
$\mathcal{H}^2(\mathcal{S}(t))$,
then a $90^\circ$ degree contact angle
condition arises. We refer to Section~\ref{sec:1} below for further details,
in particular with regard to more general contact angles.

Numerical analysis of geometric evolution equations has been an
active field in the last thirty years and we refer to 
\cite{DeckelnickDE05} for an overview. 
Approaches using parametric finite element methods
have heavily relied on ideas of Gerd Dziuk, who first used a weak formulation
of (\ref{eq:LBid}) in order to compute the mean curvature, see 
\cite{Dziuk88,Dziuk91}. The present authors have used the tangential degrees 
of freedom to improve the mesh quality during the evolution of discretized 
curvature flows, see \cite{triplej,triplejMC,gflows3d,willmore}. 
There has been interest in numerical schemes for axisymmetric
schemes for geometric evolution equations both for second and for 
fourth order flows, see 
\cite{NicholsM65b,Nichols76,BernoffBW98,ColemanFM96,%
CoxL15,DeckelnickDE03,DeckelnickS10,SudohHK13,Zhao17preprint}.
However, the literature on
numerical analysis of such schemes is sparse. For exceptions we refer to
\cite{DeckelnickDE03,DeckelnickS10} in the context of graph formulations for
surface diffusion and Willmore flow, respectively. Axisymmetric versions
of %fourth order 
geometric flows have also been treated analytically and questions
regarding stability and singularity formation have been studied, see
\cite{Huisken90,DziukK91,BasaSS94,ColemanFM95,Kohsaka16}. 
We also refer to \cite{DallAcquaS17preprint,DallAcquaS18},
who discuss the relation between the axisymmetric Willmore flow and the elastic
flow in hyperbolic space.

The structure of this work is as follows. In Section~\ref{sec:1} we introduce
weak formulations for fourth order axisymmetric geometric flows,
which all involve a splitting
into two second order equations. The weak formulations are essential for
the discretization with the help of piecewise linear, continuous 
finite elements. Spatially discretized semidiscrete schemes, 
based on these weak formulations, are introduced in Section~\ref{sec:sd}. 
Fully discrete schemes are introduced
in Section~\ref{sec:fd} and for some of the schemes existence, uniqueness 
and stability results are shown. 
Finally, in Section~\ref{sec:nr} numerical results for surface diffusion, 
for the intermediate law (\ref{eq:SALK}), for Willmore flow and for 
Helfrich flow are presented. 
The results demonstrate the stability and good mesh properties discussed 
in the preceding sections and the ideas presented in this paper hence 
have the potential to work also for more complex dynamics like
the evolution of biomembranes in flows, see e.g.\ the setting in \cite{nsns}.

\setcounter{equation}{0}
\section{Weak formulations} \label{sec:1}
\begin{figure}
\center
\newcommand{\AxisRotator}[1][rotate=0]{%
    \tikz [x=0.25cm,y=0.60cm,line width=.2ex,-stealth,#1] \draw (0,0) arc (-150:150:1 and 1);%
}
\begin{tikzpicture}[every plot/.append style={very thick}, scale = 1]
\begin{axis}[axis equal,axis line style=thick,axis lines=center, xtick style ={draw=none}, 
ytick style ={draw=none}, xticklabels = {}, 
yticklabels = {}, 
xmin=-0.2, xmax = 0.8, ymin = -0.4, ymax = 2.55]
after end axis/.code={  
   \node at (axis cs:0.0,1.5) {\AxisRotator[rotate=-90]};
   \draw[blue,->,line width=2pt] (axis cs:0,0) -- (axis cs:0.5,0);
   \draw[blue,->,line width=2pt] (axis cs:0,0) -- (axis cs:0,0.5);
   \node[blue] at (axis cs:0.5,-0.2){$\pol\ek_1$};
   \node[blue] at (axis cs:-0.2,0.5){$\pol\ek_2$};
   \draw[red,very thick] (axis cs: 0,0.7) arc[radius = 70, start angle= -90, end angle= 90];
   \node[red] at (axis cs:0.7,1.9){$\Gamma$};
}
\end{axis}
\end{tikzpicture} \qquad \qquad
\tdplotsetmaincoords{120}{50}
\begin{tikzpicture}[scale=2, tdplot_main_coords,axis/.style={->},thick]
\draw[axis] (-1, 0, 0) -- (1, 0, 0);
\draw[axis] (0, -1, 0) -- (0, 1, 0);
\draw[axis] (0, 0, -0.2) -- (0, 0, 2.7);
\draw[blue,->,line width=2pt] (0,0,0) -- (0,0.5,0) node [below] {$\pol\ek_1$};
\draw[blue,->,line width=2pt] (0,0,0) -- (0,0.0,0.5);
\draw[blue,->,line width=2pt] (0,0,0) -- (0.5,0.0,0);
\node[blue] at (0.2,0.4,0.1){$\pol\ek_3$};
\node[blue] at (0,-0.2,0.3){$\pol\ek_2$};
\node[red] at (0.7,0,1.9){$\mathcal{S}$};
\node at (0.0,0.0,2.4) {\AxisRotator[rotate=-90]};

\tdplottransformmainscreen{0}{0}{1.4}
\shade[tdplot_screen_coords, ball color = red] (\tdplotresx,\tdplotresy) circle (0.7);
\end{tikzpicture}
\caption{Sketch of $\Gamma$ and $\mathcal{S}$, as well as 
the unit vectors $\pol\ek_1$, $\pol\ek_2$ and $\pol\ek_3$.}
\label{fig:sketch}
\end{figure}

Let $\RZ$ be the periodic interval $[0,1]$, and set
\[
I = \RZ\,, \text{ with } \partial I = \emptyset\,,\quad \text{or}\quad
I = (0,1)\,, \text{ with } \partial I = \{0,1\}\,.
\]
We consider the axisymmetric situation, where 
$\pol x(t) : \overline I \to \bR^2$ 
is a parameterization of $\Gamma(t)$. 
Throughout $\Gamma(t)$ represents the generating curve of a
surface $\mathcal{S}(t)$ 
that is axisymmetric with respect to the $x_2$--axis, see
Figure~\ref{fig:sketch}. In particular, on defining
\begin{equation*} % \label{eq:Pi33}
\pol\Pi_3^3(r, z, \theta) = 
(r\,\cos\theta, z, r\,\sin\theta)^T 
\quad\text{for}\quad r\in \bRgeq\,,\ z \in \bR\,,\ \theta \in [0,2\,\pi]
\end{equation*}
and
\begin{equation*} % \label{eq:Pi23}
\Pi_2^3(r, z) = \{\pol\Pi_3^3(r, z, \theta) : \theta \in [0,2\,\pi)\}\,,
\end{equation*}
we have that
\begin{equation} \label{eq:SGamma}
\mathcal{S}(t) = 
\bigcup_{(r,z)^T \in \Gamma(t)} \Pi_2^3(r, z)
= \bigcup_{\rho \in \overline I} \Pi_2^3(\pol x(\rho,t))\,.
\end{equation}
Here we allow $\Gamma(t)$ to be either a closed curve, parameterized over
$\RZ$, which corresponds to $\mathcal{S}(t)$ being a genus-1 surface
without boundary.
Or $\Gamma(t)$ may be an open curve, parameterized over $[0,1]$.
Then $\Gamma(t)$ has two endpoints, and each endpoint can either correspond to
an interior point of $\mathcal{S}(t)$, or to a boundary circle of
$\mathcal{S}(t)$. Endpoints of $\Gamma(t)$ that correspond to an interior point
of the surface $\mathcal{S}(t)$ are attached to the $x_2$--axis, 
on which they can freely move up and down. For example, if both endpoints
of $\Gamma(t)$ are attached to the $x_2$--axis,
then $\mathcal{S}(t)$ is a genus-0 surface without boundary.
If only one end of $\Gamma(t)$ is attached to the $x_2$--axis, 
then $\mathcal{S}(t)$ is an open surface with boundary, where the boundary
consists of a single connected component.
If no endpoint of $\Gamma(t)$ is attached to the $x_2$--axis, 
then $\mathcal{S}(t)$ is an open surface with boundary, where the boundary
consists of two connected components. 

In particular, we always assume that, for all $t \in [0,T]$,
\begin{subequations}
\begin{align} 
\pol x(\rho,t) \,.\,\pol\ek_1 & > 0 \quad 
\forall\ \rho \in \overline I\setminus \partial_0 I\,,\label{eq:xpos} \\
\pol x(\rho,t) \,.\,\pol\ek_1 &= 0 \quad 
\forall\ \rho \in \partial_0 I\,,\label{eq:axibc} \\
\pol x_t(\rho,t) \,.\,\pol\ek_i &= 0 \quad 
\forall\ \rho \in \partial_i I \,, \ i =1,2\,,  \label{eq:freeslipbc} \\
\pol x_t(\rho,t) &= \pol 0 \quad 
\forall\ \rho \in \partial_D I \,, \label{eq:noslipbc}
\end{align}
\end{subequations}
where $\partial_D I \cup \bigcup_{i=0}^2 \partial_i I = \partial I$ 
is a disjoint partitioning of $\partial I$, with $\partial_0 I$
denoting the subset of boundary points 
of $I$ that correspond to endpoints of $\Gamma(t)$ attached to the 
$x_2$--axis. Moreover, $\partial_D I \cup \bigcup_{i=1}^2 \partial_i I$ 
denotes the subset of boundary points of $I$ that model components of the
boundary of $\mathcal{S}(t)$. Here endpoints in $\partial_D I$ correspond
to fixed boundary circles of $\mathcal{S}(t)$, 
that lie within a hyperplane parallel to
the $x_1-x_3$--plane $\bR \times \{0\} \times \bR$.
Endpoints in $\partial_1 I$ correspond to boundary circles of $\mathcal{S}(t)$
that can move freely along the boundary of an infinite cylinder
that is aligned with the axis of rotation.
Endpoints in $\partial_2 I$ correspond to boundary circles of $\mathcal{S}(t)$
that can expand/shrink freely within a hyperplane parallel to
the $x_1-x_3$--plane $\bR \times \{0\} \times \bR$.
See Table~\ref{tab:diagram} for a visualization of the different types of 
boundary nodes.
\begin{table}
\center
\caption{The different types of boundary nodes enforced by 
(\ref{eq:axibc})--(\ref{eq:noslipbc}).}
\begin{tabular}{ccc}
\hline
$\partial I$ & $\partial \Gamma$ & $\partial\mathcal{S}$ \\ \hline
$\partial_0 I$ &
\begin{tikzpicture}[scale=0.5,baseline=40]
\begin{axis}[axis equal,axis line style=thick,axis lines=center, 
xtick style ={draw=none}, ytick style ={draw=none}, xticklabels = {}, 
yticklabels = {}, xmin=-0.1, xmax = 2, ymin = -2, ymax = 2]
\addplot[mark=*,color=blue,mark size=6pt] coordinates {(0,1)};
\draw[<->,line width=3pt,color=red] (axis cs:0.3,0.5) -- (axis cs:0.3,1.5);
\node at (axis cs:0.5,-0.3){\Large$\pol\ek_1$};
\node at (axis cs:-0.3,0.5){\Large$\pol\ek_2$};
\end{axis}
\addvmargin{1mm}
\end{tikzpicture} 
& N/A \\ 
$\partial_D I$ &
\begin{tikzpicture}[scale=0.5,baseline=40]
\begin{axis}[axis equal,axis line style=thick,axis lines=center, 
xtick style ={draw=none}, ytick style ={draw=none}, xticklabels = {}, 
yticklabels = {}, xmin=-0.1, xmax = 2, ymin = -2, ymax = 2]
\addplot[mark=*,color=blue,mark size=6pt] coordinates {(2,1)};
\node at (axis cs:0.5,-0.3){\Large$\pol\ek_1$};
\node at (axis cs:-0.3,0.5){\Large$\pol\ek_2$};
\end{axis}
\addvmargin{1mm}
\end{tikzpicture} 
& 
\begin{tikzpicture}[baseline=0]
\draw[color=blue,thick] (0,0) circle [x radius=2cm, y radius=1cm];
\addvmargin{1mm}
\end{tikzpicture} 
\\ 
$\partial_1 I$ &
\begin{tikzpicture}[scale=0.5,baseline=40]
\begin{axis}[axis equal,axis line style=thick,axis lines=center, 
xtick style ={draw=none}, ytick style ={draw=none}, xticklabels = {}, 
yticklabels = {}, xmin=-0.1, xmax = 2, ymin = -2, ymax = 2]
\addplot[mark=*,color=blue,mark size=6pt] coordinates {(2,1)};
\draw[<->,line width=3pt,color=red] (axis cs:2.3,0.5) -- (axis cs:2.3,1.5);
\draw[thick,color=blue] (axis cs:2,-2) -- (axis cs:2,2);
\node at (axis cs:0.5,-0.3){\Large$\pol\ek_1$};
\node at (axis cs:-0.3,0.5){\Large$\pol\ek_2$};
\end{axis}
\addvmargin{1mm}
\end{tikzpicture} 
& 
\begin{tikzpicture}[baseline=0]
\draw[color=blue,thick] (0,0) circle [x radius=2cm, y radius=1cm];
\draw[<->,color=red,line width=2pt] (2.2,-0.5) -- (2.2,0.5);
\draw[color=blue,thin] (2,-1) -- (2,1);
\draw[color=blue,thin] (-2,-1) -- (-2,1);
\addvmargin{1mm}
\end{tikzpicture} 
\\ 
$\partial_2 I$ &
\begin{tikzpicture}[scale=0.5,baseline=40]
\begin{axis}[axis equal,axis line style=thick,axis lines=center, 
xtick style ={draw=none}, ytick style ={draw=none}, xticklabels = {}, 
yticklabels = {}, xmin=-0.1, xmax = 2, ymin = -2, ymax = 2]
\addplot[mark=*,color=blue,mark size=6pt] coordinates {(2,1)};
\draw[<->,line width=3pt,color=red] (axis cs:1.5,0.7) -- (axis cs:2.5,0.7);
\draw[thick,color=blue] (axis cs:0,1) -- (axis cs:4,1);
\node at (axis cs:0.5,-0.3){\Large$\pol\ek_1$};
\node at (axis cs:-0.3,0.5){\Large$\pol\ek_2$};
\end{axis}
\addvmargin{1mm}
\end{tikzpicture} 
& 
\begin{tikzpicture}[baseline=0]
\draw[color=blue,thick] (0,0) circle [x radius=2cm, y radius=1cm];
\draw[color=blue,thin] (0,0) circle [x radius=1.5cm, y radius=0.75cm];
\draw[color=blue,thin] (0,0) circle [x radius=2.5cm, y radius=1.25cm];
\draw[<->,color=red,line width=2pt] (1.5,0) -- (2.5,0);
\draw[<->,color=red,line width=2pt] (-2.5,0) -- (-1.5,0);
\addvmargin{1mm}
\end{tikzpicture} \\ \hline
\end{tabular}
\label{tab:diagram}
\end{table}%

On assuming that
\begin{equation} \label{eq:xrho}
|\pol x_\rho| \geq c_0 > 0 \qquad \forall\ \rho \in \overline I\,,
\end{equation}
we introduce the arclength $s$ of the curve, i.e.\ $\partial_s =
|\pol{x}_\rho|^{-1}\,\partial_\rho$, and set
\begin{equation} \label{eq:tau}
\pol\tau(\rho,t) = \pol x_s(\rho,t) = 
\frac{\pol x_\rho(\rho,t)}{|\pol x_\rho(\rho,t)|} \qquad \mbox{and}
\qquad \pol\nu(\rho,t) = -[\pol\tau(\rho,t)]^\perp,
\end{equation}
where $(\cdot)^\perp$ denotes a clockwise rotation by $\frac{\pi}{2}$.

On recalling (\ref{eq:SGamma}), we observe that the normal
$\pol\normal_{\mathcal{S}}$ on $\mathcal{S}(t)$ is given by
\begin{equation} \label{eq:nuS}
\pol\normal_{\mathcal{S}}(\pol\Pi_3^3(\pol x(\rho,t),\theta)) = 
\pol\nu_{\mathcal{S}}(\rho,\theta,t) = 
\begin{pmatrix}
(\pol\nu(\rho,t)\,.\,\pol\ek_1)\,\cos\theta \\
\pol\nu(\rho,t)\,.\,\pol\ek_2 \\
(\pol\nu(\rho,t)\,.\,\pol\ek_1)\,\sin\theta 
\end{pmatrix}
 \quad\text{for}\quad
\rho \in \overline I\,,\ %t \in [0,T]\,,\ 
\theta \in [0,2\,\pi)
\end{equation}
and $t\in[0,T]$.
Similarly, the normal velocity $\mathcal{V}_{\mathcal{S}}$ of $\mathcal{S}(t)$ 
in the direction $\pol\normal_{\mathcal{S}}$ is given by
\begin{equation*} % \label{eq:calVS}
\mathcal{V}_{\mathcal{S}} = \pol x_t(\rho,t)\,.\,\pol\nu(\rho,t) \quad\text{on }
\Pi_2^3(\pol x(\rho,t)) \subset \mathcal{S}(t)\,,
\quad \forall\ \rho \in \overline I\,,\ t \in [0,T]\,.
\end{equation*}

For the curvature $\varkappa$ of $\Gamma(t)$ it holds that
\begin{equation} \label{eq:varkappa}
\varkappa\,\pol\nu = \pol\varkappa = \pol\tau_s =
\frac1{|\pol x_\rho|} \left[ \frac{\pol x_\rho}{|\pol x_\rho|} \right]_\rho.
\end{equation}

An important role in this paper is played by the surface area of the 
surface $\mathcal{S}(t)$, which is equal to
\begin{equation} \label{eq:A}
\mathcal{H}^2(\mathcal{S}(t)) = A(\pol x(t)) = 
2\,\pi\,\int_I \pol x(\rho,t)\,.\,\pol\ek_1\,|\pol x_\rho(\rho,t)|
\drho\,.
\end{equation}
Often the surface area, $A(\pol x(t))$, 
will play the role of the free energy in our paper. 
But for an open surface $\mathcal{S}(t)$, with boundary 
$\partial\mathcal{S}(t)$, 
we consider contact energy contributions which are discussed in
\cite{Finn86}, see also \cite[(2.21)]{ejam3d}. 
In the axisymmetric setting the relevant energy is given by
\begin{equation} \label{eq:E}
E(\pol x(t)) = A(\pol x(t))
+ 2\,\pi\,\sum_{p\in\partial_1 I} 
\sliprho^{(p)}\,(\pol x(p,t)\,.\,\pol\ek_1)\,\pol x(p,t)\,.\,\pol\ek_2
+ \pi\,\sum_{p\in\partial_2 I} 
\sliprho^{(p)}\,(\pol x(p,t)\,.\,\pol\ek_1)^2\,,
\end{equation}
where we recall from (\ref{eq:freeslipbc}) that, for $i=1,2$, either 
$\partial_i I = \emptyset$, $\{0\}$, $\{1\}$ or $\{0,1\}$.
In the above $\sliprho^{(p)} \in \bR$, for $p\in \{0,1\}$, are given constants.
Here $\sliprho^{(p)}$, for $p \in \partial_1 I$, 
denotes the change in contact energy
density in the direction of $-\pol\ek_2$, that the two phases separated by
the interface $\mathcal{S}(t)$ have with the infinite cylinder at
the boundary circle of $\mathcal{S}(t)$ represented by $\pol x(p,t)$. 
Similarly, $\sliprho^{(p)}$, for $p \in \partial_2 I$,
denotes the change in contact energy
density in the direction of $-\pol\ek_1$, that the two phases separated by
the interface $\mathcal{S}(t)$ have with the hyperplane $\bR\times\{0\}\bR$ 
at the boundary circle of $\mathcal{S}(t)$ represented by
$\pol x(p,t)$. 
These changes in contact energy lead to the contact angle conditions
\begin{subequations} \label{eq:mcbc}
\begin{align} 
(-1)^p\,\pol\tau(p,t)\,.\,\pol\ek_2 &= \sliprho^{(p)} 
\qquad p \in \partial_1 I\,,\label{eq:mcbc1} \\
(-1)^p\,\pol\tau(p,t)\,.\,\pol\ek_1 &= \sliprho^{(p)} 
\qquad p \in \partial_2 I\,,\label{eq:mcbc2}
\end{align}
\end{subequations}
for all $t \in (0,T]$.
In most cases, the contact energies are assumed to be
the same, so that $\sliprho^{(0)}=\sliprho^{(1)}=0$, which leads to
$90^\circ$ contact angle conditions in (\ref{eq:mcbc}), and means that
(\ref{eq:E}) collapses to (\ref{eq:A}).
See \cite{ejam3d} for more details on contact angles and contact energies.
We note that a necessary condition to
admit a solution to (\ref{eq:mcbc1}) or to (\ref{eq:mcbc2}) is that 
$|\sliprho^{(p)}| \leq 1$, but we do allow for more general values in
(\ref{eq:E}). 
In addition, we observe that the energy (\ref{eq:E}) is not bounded from below
if $\sliprho^{(p)} \not=0 $ for $p\in\partial_1 I$ or if
$\sliprho^{(p)}<0$ for $p\in\partial_2 I$.

For later use we note that
\begin{align}
\ddt\, E(\pol x(t)) & = 2\,\pi\,\int_I \left[\pol x_t\,.\,\pol\ek_1
+ \pol x\,.\,\pol\ek_1\,\frac{(\pol x_t)_\rho\,.\,\pol x_\rho}{|\pol x_\rho|^2}
\right] |\pol x_\rho| \drho % \nonumber \\ & \qquad
+ 2\,\pi\,\sum_{p\in\partial_1 I} 
\sliprho^{(p)}\left[
(\pol x_t(p,t)\,.\,\pol\ek_1)\,\pol x(p,t)\,.\,\pol\ek_2
+ (\pol x(p,t)\,.\,\pol\ek_1)\,\pol x_t(p,t)\,.\,\pol\ek_2
\right]
\nonumber \\ & \qquad
+ 2\,\pi\,\sum_{p\in\partial_2 I} 
\sliprho^{(p)}\,(\pol x(p,t)\,.\,\pol\ek_1)\,\pol x_t(p,t)\,.\,\pol\ek_1
\,.
\label{eq:dEdt}
\end{align}
Moreover, we recall that expressions for the mean curvature and the 
Gaussian curvature of $\mathcal{S}(t)$ are given by
\begin{equation} \label{eq:meanGaussS}
\varkappa_{\mathcal{S}} = 
\varkappa - 
\frac{\pol\nu\,.\,\pol\ek_1}{\pol x\,.\,\pol\ek_1}
\quad\text{and}\quad
\Gauss_{\mathcal{S}} = -
\varkappa\,\frac{\pol\nu\,.\,\pol\ek_1}{\pol x\,.\,\pol\ek_1}
\quad\text{on }\ \overline{I}\,,
\end{equation}
respectively; see e.g.\ \cite[(6)]{CoxL15}. More precisely, if $k_m$ and $k_g$
denote the mean and Gaussian curvatures of $\mathcal{S}(t)$, then
\begin{equation} \label{eq:kmkg}
k_m = \varkappa_{\mathcal{S}}(\rho,t) 
\ \text{ and }\
k_g = \Gauss_{\mathcal{S}}(\rho,t) 
\quad\text{on }
\Pi_2^3(\pol x(\rho,t)) \subset \mathcal{S}(t)\,,
\quad \forall\ \rho \in \overline I\,,\ t \in [0,T]\,.
\end{equation}
In the literature, the two terms making up $\varkappa_{\mathcal{S}}$
in (\ref{eq:meanGaussS}) 
are often referred to as in-plane and azimuthal curvatures,
respectively, with their sum being equal to the mean curvature.
We note that combining (\ref{eq:meanGaussS}) and (\ref{eq:varkappa}) yields 
that
\begin{equation} \label{eq:kappaS}
\varkappa_{\mathcal{S}}\,\pol\nu = 
\varkappa\,\pol\nu 
- \frac{\pol\nu\,.\,\pol\ek_1}{\pol x\,.\,\pol\ek_1}\,\pol\nu
=
\frac1{|\pol x_\rho|} \left[ \frac{\pol x_\rho}{|\pol x_\rho|} \right]_\rho
- \frac{\pol\nu\,.\,\pol\ek_1}{\pol x\,.\,\pol\ek_1}\,\pol\nu\,,
\end{equation}
see also (\ref{eq:LBSvecy}) in Appendix~\ref{sec:B}.
It follows from (\ref{eq:kappaS}) that
\begin{equation} \label{eq:kappaSnew}
(\pol x \,.\,\pol\ek_1)\,\varkappa_{\mathcal{S}}\,\pol\nu
 = (\pol x \,.\,\pol\ek_1)\,\pol\tau_s
+ (\pol\tau\,.\,\pol\ek_1)\,\pol\tau - \pol\ek_1
= [(\pol x \,.\,\pol\ek_1)\,\pol\tau]_s  - \pol\ek_1 
= [(\pol x \,.\,\pol\ek_1)\,\pol x_s]_s  - \pol\ek_1\,.
\end{equation}
A weak formulation of (\ref{eq:kappaSnew}) will form the basis of our stable
approximations for surface diffusion, (\ref{eq:sdS}), and the intermediate flow
(\ref{eq:SALK}). 
Clearly, for a smooth surface with bounded mean curvature it follows from
(\ref{eq:kappaS}) that 
\begin{equation} \label{eq:bcnu}
\pol\nu(\rho,t) \,.\,\pol\ek_1 = 0
\qquad \forall\ \rho \in \partial_0 I\,,\quad \forall\ t\in[0,T]\,,
\end{equation}
which is clearly equivalent to
\begin{equation} \label{eq:bc}
\pol x_\rho(\rho,t) \,.\,\pol\ek_2 = 0
\qquad \forall\ \rho \in \partial_0 I\,,\quad \forall\ t\in[0,T]\,.
\end{equation}
A precise derivation of (\ref{eq:bc}) in the context of a weak formulation
of (\ref{eq:kappaS}) can be found in \cite[Appendix~A]{aximcf}.

We observe that it follows from (\ref{eq:bcnu}) and (\ref{eq:varkappa}) 
that
\begin{equation} 
\lim_{\rho\to \rho_0}
\frac{\pol\nu(\rho,t)\,.\,\pol\ek_1}{\pol x(\rho,t)\,.\,\pol\ek_1} 
= \lim_{\rho\to \rho_0} 
\frac{\pol\nu_\rho(\rho,t)\,.\,\pol\ek_1}{\pol x_\rho(\rho,t)\,.\,\pol\ek_1}
= \pol\nu_s(\rho_0,t)\,.\,\pol\tau(\rho_0,t) 
= -\varkappa(\rho_0,t)
\qquad \forall\ \rho_0\in\partial_0 I\,,\
\forall\ t \in [0,T]\,.
\label{eq:bclimit}
\end{equation}

\subsection{Surface diffusion}
On recalling (\ref{eq:LBSrad}) from Appendix~\ref{sec:B}, we note that
in the axisymmetric parameterization of $\mathcal{S}(t)$,
the flow (\ref{eq:sdS}) can be written as
\begin{equation} \label{eq:xtsd}
(\pol x\,.\,\pol\ek_1)\,\pol x_t\,.\,\pol\nu = 
- \left[\pol x\,.\,\pol\ek_1\,[\varkappa_{\mathcal{S}} ]_s \right]_s
\qquad\text{on } I\,,
\end{equation}
with, on recalling (\ref{eq:axibc})--(\ref{eq:noslipbc}),
\begin{equation} \label{eq:xtbc}
\pol x_t(\rho,t)\,.\,\pol\ek_1 = 0 \quad
\forall\ \rho\in\partial_0 I\,, \quad 
\pol x_t(\rho,t)\,.\,\pol\ek_i = 0 \quad
\forall\ \rho\in\partial_i I\,,\ i = 1,2\,, \quad %\nonumber \\  
\pol x_t(\rho,t) = \pol 0 \quad \forall\ \rho\in\partial_D I\,, 
\qquad \forall\ t\in[0,T]\,,
\end{equation}
as well as (\ref{eq:bc}), (\ref{eq:mcbc}) and
\begin{equation} \label{eq:sdbca}
(\varkappa_{\mathcal{S}})_\rho  (\rho,t) = 
\left(\varkappa- \frac{\pol\nu\,.\,\pol\ek_1}
{\pol x\,.\,\pol\ek_1} \right)_\rho (\rho,t) = 0 \qquad 
\forall\ \rho \in \partial I\,, \qquad\forall\ t\in(0,T]\,.
\end{equation}
Here (\ref{eq:sdbca}) for $\rho \in \partial_0 I$ ensures that the radially
symmetric function $k_m$, recall (\ref{eq:kmkg}), on $\mathcal{S}(t)$ 
induced by $\varkappa_{\mathcal{S}}(t)$ is differentiable.
For $\rho \in \partial_1 I \cup \partial_2 I \cup \partial_D I$ 
the condition (\ref{eq:sdbca}) can be interpreted
as a no-flux condition. 
We remark that (\ref{eq:xtsd}) agrees with \cite[(2)]{NicholsM65b}.

Let 
$\Vpartialzero = \{ \pol\eta \in [H^1(I)]^2 : \pol\eta(\rho)\,.\,\pol\ek_1 = 0
\quad \forall\ \rho \in \partial_0 I\}$ and
$\Vpartial = \{ \Vpartialzero : \pol\eta(\rho)\,.\,\pol\ek_i = 0
\quad \forall\ \rho \in \partial_i I\,,\ i=1,2, \
\pol\eta(\rho) = \pol 0 \quad \forall\ \rho \in \partial_D I\}$.
Then we consider the following weak formulation of
(\ref{eq:xtsd}) and (\ref{eq:varkappa}), on recalling (\ref{eq:meanGaussS}).

$(\BGNsd)$:
Let $\pol x(0) \in \Vpartialzero$. For $t \in (0,T]$
find $\pol x(t) \in [H^1(I)]^2$, with $\pol x_t(t) \in \Vpartial$, 
and $\varkappa(t)\in H^1(I)$ such that
\begin{subequations} \label{eq:sdweak}
\begin{align}
& \int_I  (\pol x\,.\,\pol\ek_1)\,\pol x_t\,.\,\pol\nu\,\chi\,|\pol x_\rho|\drho
= \int_I  \pol x\,.\,\pol\ek_1 
\left(\varkappa - \frac{\pol\nu\,.\,\pol\ek_1}{\pol x\,.\,\pol\ek_1}
\right)_\rho\, \chi_\rho\,|\pol x_\rho|^{-1} \drho 
\qquad \forall\ \chi \in H^1(I)\,, \label{eq:sdweaka} \\
& \int_I \varkappa\,\pol\nu\,.\,\pol\eta\, |\pol x_\rho| \drho
+ \int_I (\pol x_\rho\,.\,\pol\eta_\rho)\, |\pol x_\rho|^{-1} \drho
= - \sum_{i=1}^2 
\sum_{p \in \partial_i I} \sliprho^{(p)}\,\pol\eta(p)\,.\,\pol\ek_{3-i}
\qquad \forall\ \pol\eta \in \Vpartial\,.
\label{eq:sdweakb}
\end{align}
\end{subequations}
We note that (\ref{eq:sdweakb}) weakly imposes (\ref{eq:bc})
and (\ref{eq:mcbc}), while it is immediately clear that 
(\ref{eq:sdweaka}) weakly imposes (\ref{eq:sdbca}) on 
$\partial I \setminus \partial_0 I$.
The degenerate weight $\pol x\,.\,\pol\ek_1$ on the right hand side in
(\ref{eq:sdweaka}) means that it is not obvious 
that (\ref{eq:sdweaka}) weakly imposes (\ref{eq:sdbca}) on $\partial_0 I$.
Hence we rigorously derive in Appendix~\ref{sec:A}
that (\ref{eq:sdweaka}) does indeed weakly impose
(\ref{eq:sdbca}) on $\partial_0 I$.

Let $\mathcal{L}^3$ denote the Lebesgue measure in $\bR^3$. Then
choosing $\chi = 2\,\pi$ in (\ref{eq:sdweaka}) yields
\begin{equation} \label{eq:dVdtsd}
\pm \ddt\,\mathcal{L}^3(\Omega(t)) = 
\int_{\mathcal{S}(t)} \mathcal{V}_{\mathcal{S}} \dH{2}
= 2\,\pi\,\int_I (\pol x\,.\,\pol\ek_1)
\,\pol x_t\,.\,\pol\nu\,|\pol x_\rho|\drho = 0\,,
\end{equation}
where %$\mathcal{L}^3$ denotes the Lebesgue measure in $\bR^3$, 
$\mathcal{S}(t) = \partial\Omega(t)$, 
and where the sign in (\ref{eq:dVdtsd}) depends on whether 
$\pol\normal_{\mathcal{S}}$ is the outer or inner normal
to $\Omega(t)$ on $\mathcal{S}(t)$, recall (\ref{eq:nuS}). 
Moreover, choosing $\chi = 
\varkappa - \frac{\pol\nu\,.\,\pol\ek_1}{\pol x\,.\,\pol\ek_1}$ in 
(\ref{eq:sdweaka}) and $\pol\eta = \pol x_t$ in (\ref{eq:sdweakb}) yields,
on recalling (\ref{eq:dEdt}) and (\ref{eq:xpos}), that
\begin{equation} \label{eq:sdweakstab}
\frac1{2\,\pi}\,\ddt\, E(\pol x(t)) 
= -\int_I \pol x \,.\,\pol\ek_1
\left|\left[\varkappa - \frac{\pol\nu\,.\,\pol\ek_1}{\pol x\,.\,\pol\ek_1}
\right]_\rho\right|^2 |\pol x_\rho|^{-1} \drho \leq 0\,.
\end{equation}
It does not appear possible to mimic the proof of (\ref{eq:sdweakstab}) on the
discrete level. Hence we also introduce the following 
alternative formulation for surface diffusion, which treats the mean
curvature $\varkappa_{\mathcal{S}}(t)$ of $\mathcal{S}(t)$ as an unknown. 

$(\BGNsdstab)$:
Let $\pol x(0) \in \Vpartialzero$. For $t \in (0,T]$
find $\pol x(t) \in [H^1(I)]^2$, with $\pol x_t(t) \in \Vpartial$, 
and $\varkappa_{\mathcal{S}}(t)\in H^1(I)$ such that
\begin{subequations} \label{eq:sdstabab}
\begin{align}
& \int_I  (\pol x\,.\,\pol\ek_1)\,\pol x_t\,.\,\pol\nu\,\chi\,|\pol x_\rho|\drho
= \int_I  \pol x\,.\,\pol\ek_1\,
(\varkappa_{\mathcal{S}})_\rho\, \chi_\rho\,|\pol x_\rho|^{-1} \drho 
\qquad \forall\ \chi \in H^1(I)\,, \label{eq:sdstaba} \\
& \int_I \pol x\,.\,\pol\ek_1\,
\varkappa_{\mathcal{S}}\,\pol\nu\,.\,\pol\eta\, |\pol x_\rho| \drho
+\int_I \left[\pol\eta \,.\,\pol\ek_1
+ \pol x\,.\,\pol\ek_1\,\frac{\pol x_\rho\,.\,\pol\eta_\rho}{|\pol x_\rho|^2}
\right] |\pol x_\rho| \drho 
= - \sum_{i=1}^2 \sum_{p \in \partial_i I} \sliprho^{(p)}\,
(\pol x(p,t)\,.\,\pol\ek_1)\,\pol\eta(p)\,.\,\pol\ek_{3-i}
\qquad \forall\ \pol\eta \in \Vpartial\,.
\label{eq:sdstabb}
\end{align}
\end{subequations}
We note that (\ref{eq:sdstabb}) 
weakly imposes (\ref{eq:bc}) and (\ref{eq:mcbc}), 
while (\ref{eq:sdstaba}) weakly imposes 
(\ref{eq:sdbca}), recall (\ref{eq:meanGaussS}), 
where for the case $\partial_0 I \not= \emptyset$ we refer to
Appendix~\ref{sec:A}.

Choosing $\chi = 2\,\pi$ in (\ref{eq:sdstaba}) yields (\ref{eq:dVdtsd}), as
before. Moreover, choosing $\chi = \varkappa_{\mathcal{S}}$ in 
(\ref{eq:sdstaba}) and $\pol\eta = \pol x_t$ in (\ref{eq:sdstabb}) yields,
on recalling (\ref{eq:dEdt}), that
\begin{equation} \label{eq:sdstab}
\frac1{2\,\pi}\,\ddt\, E(\pol x(t))
= -\int_I \pol x \,.\,\pol\ek_1\,
|(\varkappa_{\mathcal{S}})_\rho|^2\,|\pol x_\rho|^{-1} \drho \leq 0\,.
\end{equation}
In contrast to (\ref{eq:sdweakstab}), it will be possible to mimic the proof
of (\ref{eq:sdstab}) on the discrete level.

\subsection{Intermediate evolution law}
In the axisymmetric parameterization of $\mathcal{S}(t)$,
the flow (\ref{eq:salknew}) can be written, similarly to (\ref{eq:xtsd}), as
\begin{equation} \label{eq:xtSALK}
(\pol x\,.\,\pol\ek_1)\,\pol x_t\,.\,\pol\nu = 
- \left[\pol x\,.\,\pol\ek_1\,y_s \right]_s\,,\quad
-\tfrac1\xi\,[\pol x\,.\,\pol\ek_1\,y_s]_s + \tfrac1\alpha\,\pol
x\,.\,\pol\ek_1\,y = x\,.\,\pol\ek_1\,\varkappa_{\mathcal{S}}
\qquad\text{on } I\,,
\end{equation}
with (\ref{eq:xtbc}), as well as (\ref{eq:bc}), (\ref{eq:mcbc}) and
\begin{equation} \label{eq:intbca}
y_\rho  (\rho,t) = 0\qquad
\forall\ \rho \in \partial I\,, \qquad\forall\ t\in(0,T]\,.
\end{equation}

It is straightforward to adapt the formulations $(\BGNsd)$ and $(\BGNsdstab)$
to (\ref{eq:xtSALK}). For example, generalizing $(\BGNsdstab)$ to 
(\ref{eq:xtSALK}) yields the following weak formulation.

$(\BGNintstab)$:
Let $\pol x(0) \in \Vpartialzero$. For $t \in (0,T]$
find $\pol x(t) \in [H^1(I)]^2$, with $\pol x_t(t) \in \Vpartial$, 
and $(y(t), \varkappa_{\mathcal{S}}(t))\in [H^1(I)]^2$ such that
\begin{subequations} \label{eq:intstababc}
\begin{align}
& \int_I (\pol x\,.\,\pol\ek_1)\,\pol x_t\,.\,\pol\nu\,\chi\,|\pol x_\rho|\drho
= \int_I \pol x\,.\,\pol\ek_1\, y_\rho\, \chi_\rho\,|\pol x_\rho|^{-1} \drho 
\qquad \forall\ \chi \in H^1(I)\,, \label{eq:intstaba} \\
&
\frac1\xi\, \int_I \pol x\,.\,\pol\ek_1 \, y_\rho\, \zeta_\rho\,
|\pol x_\rho|^{-1} \drho
+ \int_I \pol x\,.\,\pol\ek_1 \, \left[ \alpha^{-1}\,y -
\varkappa_{\mathcal{S}} \right] \zeta\,|\pol x_\rho| \drho = 0
\qquad \forall\ \zeta \in H^1(I)\,, \label{eq:intstabb}\\
& \int_I \pol x\,.\,\pol\ek_1\,
\varkappa_{\mathcal{S}}\,\pol\nu\,.\,\pol\eta\, |\pol x_\rho| \drho
+\int_I \left[\pol\eta \,.\,\pol\ek_1
+ \pol x\,.\,\pol\ek_1\,\frac{\pol x_\rho\,.\,\pol\eta_\rho}{|\pol x_\rho|^2}
\right] |\pol x_\rho| \drho 
= - \sum_{i=1}^2 \sum_{p \in \partial_i I} \sliprho^{(p)}\,
(\pol x(p,t)\,.\,\pol\ek_1)\,\pol\eta(p)\,.\,\pol\ek_{3-i}
\qquad \forall\ \pol\eta \in \Vpartial\,.
\label{eq:intstabc}
\end{align}
\end{subequations}
The weak formulation of (\ref{eq:xtSALK}) corresponding to $(\BGNsd)$ is given 
by (\ref{eq:intstaba}), (\ref{eq:sdweakb}) and (\ref{eq:intstabb}) with
$\varkappa_{\mathcal{S}}$ replaced by the expression in (\ref{eq:meanGaussS}). 
We note that (\ref{eq:intstabc}) 
weakly imposes (\ref{eq:bc}) and (\ref{eq:mcbc}), 
while (\ref{eq:intstaba}) and (\ref{eq:intstabb}) weakly impose
(\ref{eq:intbca}), 
where for the case $\partial_0 I \not= \emptyset$ we refer once again to 
Appendix~\ref{sec:A}.

Choosing $\chi = 2\,\pi$ in (\ref{eq:intstaba}) yields (\ref{eq:dVdtsd}), as
before. Moreover, choosing $\chi = \frac\alpha\xi\,\varkappa_{\mathcal{S}}$ in 
(\ref{eq:intstaba}), $\zeta = \alpha\,\varkappa_{\mathcal{S}} - y_s$ in
(\ref{eq:intstabb}) and $\pol\eta = \frac\alpha\xi\,\pol x_t$ in 
(\ref{eq:intstabc}) yields, similarly to (\ref{eq:sdstab}), that
\begin{equation} \label{eq:intstab}
\frac\alpha\xi\,\frac1{2\,\pi}\,\ddt\, E(\pol x(t)) 
= - \frac\alpha\xi\,\int_I \pol x \,.\,\pol\ek_1\,
y_\rho\,(\varkappa_{\mathcal{S}})_\rho\,|\pol x_\rho|^{-1} \drho 
= - \frac1\xi\,\int_I \pol x \,.\,\pol\ek_1\,
|y_\rho|^2\,|\pol x_\rho|^{-1} \drho 
- \alpha\,\int_I \pol x \,.\,\pol\ek_1\,
|\varkappa_{\mathcal{S}} - \tfrac1\alpha\,y|^2\,|\pol x_\rho| \drho 
\leq 0\,.
\end{equation}

\subsection{Willmore flow}
It holds that the Willmore energy of the surface $\mathcal{S}(t)$,
recall (\ref{eq:W}), can be written as
\begin{equation*} % \label{eq:W2}
W(\pol x(t)) = \tfrac12\,\int_{\mathcal{S}(t)} (k_m - \spont)^2 \dH{2} 
= \pi\,\int_I \pol x\,.\,\pol\ek_1
\,(\varkappa_S - \spont)^2 \, |\pol x_\rho| \drho\,, 
\end{equation*}
see also \cite[(6),(7)]{CoxL15}. 
Noting once more (\ref{eq:LBSrad}) from Appendix~\ref{sec:B},
a strong formulation for the flow (\ref{eq:Willmore_flow}) on $I$ is given by
\begin{equation} \label{eq:xtbgn}
(\pol x\,.\,\pol\ek_1)\,\pol x_t\,.\,\pol\nu = 
- \left[\pol x\,.\,\pol\ek_1\,[\varkappa_{\mathcal{S}}]_s \right]_s
+ 2\,\pol x\,.\,\pol\ek_1\,[\varkappa_{\mathcal{S}} - \spont
]\,\Gauss_{\mathcal{S}}
- \tfrac12\,\pol x\,.\,\pol\ek_1
\left(\varkappa_{\mathcal{S}}^2 -\spont^2\right)
\varkappa_{\mathcal{S}}
\quad\text{on }\ I\,,
\end{equation}
with (\ref{eq:bc}), 
(\ref{eq:sdbca}) and $\pol x_t(\rho,t)\,.\,\pol\ek_1 = 0$ for 
$\rho\in\partial_0 I = \partial I$, $t\in[0,T]$.
Here we stress that for
Willmore flow we always assume that $\partial_0 I = \partial I$.
That is because it does not appear possible to model Willmore flow 
for open surfaces in the weak formulation (\ref{eq:bgnweak}), below.
The reason is that the relevant boundary conditions,
i.e.\ clamped, Navier, semi-free or free, 
see e.g.\ \cite[p.\ 1706]{pwfopen}, that would need to be
enforced for $\pol x_t$, cannot be enforced through this weak formulation
in the open curve case. Instead, techniques 
as in \cite{pwfopen} are needed here, and we will consider the details 
in the forthcoming paper \cite{axipwf}.

Then we consider the following weak formulation of
(\ref{eq:xtbgn}) and (\ref{eq:varkappa}), on recalling (\ref{eq:meanGaussS}).

$(\BGNwf)$:
Let $\pol x(0) \in \Vpartialzero$. For $t \in (0,T]$
find $\pol x(t) \in [H^1(I)]^2$, with $\pol x_t(t) \in \Vpartial$, 
and $\varkappa(t)\in H^1(I)$ such that
\begin{subequations} \label{eq:bgnweak}
\begin{align}
& \int_I (\pol x\,.\,\pol\ek_1)\,\pol x_t\,.\,\pol\nu\,\chi 
\,|\pol x_\rho|\drho
 = \int_I \pol x\,.\,\pol\ek_1 
\left[\varkappa - \frac{\pol\nu\,.\,\pol\ek_1}{\pol x\,.\,\pol\ek_1}
\right]_\rho \chi_\rho\, |\pol x_\rho|^{-1} \drho
- 2 \int_I \left[\varkappa - 
\frac{\pol\nu\,.\,\pol\ek_1}{\pol x\,.\,\pol\ek_1} - \spont\right]
\varkappa\,\pol\nu\,.\,\pol\ek_1\, \chi\,|\pol x_\rho| \drho
\nonumber \\ & \hspace{4cm}
- \tfrac12 \int_I
\pol x\,.\,\pol\ek_1
\left(
\left[\varkappa - \frac{\pol\nu\,.\,\pol\ek_1}{\pol x\,.\,\pol\ek_1}\right]^2
-\spont^2\right)
\left[\varkappa - \frac{\pol\nu\,.\,\pol\ek_1}{\pol x\,.\,\pol\ek_1}\right]
 \chi\,|\pol x_\rho| \drho
\qquad \forall\ \chi \in H^1(I)\,,
\label{eq:bgnweaka} \\
& \int_I \varkappa\,\pol\nu\,.\,\pol\eta\,|\pol x_\rho| \drho
+  \int_I \pol x_\rho\,.\,\pol\eta_\rho\,|\pol x_\rho|^{-1} \drho
 = 0
\qquad \forall\ \pol\eta \in \Vpartial\,.
\label{eq:bgnweakb}
\end{align}
\end{subequations}
We note that the two last terms on the right hand side of (\ref{eq:bgnweaka}) 
give no contribution at the boundary $\partial I = \partial_0 I$, 
since $\pol\nu\,.\,\pol\ek_1 = \pol x\,.\,\pol\ek_1 = 0$ there.
We also note that (\ref{eq:bgnweakb}) weakly imposes (\ref{eq:bc}). 
Similarly to (\ref{eq:sdweaka}), we note that
(\ref{eq:bgnweaka}) weakly imposes (\ref{eq:sdbca}), see 
\cite[Appendix~A]{aximcf} for details in the case $\rho \in \partial_0 I$.

We note that in contrast to surface diffusion, a weak formulation 
for Willmore flow based on 
$\varkappa_S$, i.e.\ (\ref{eq:sdstabb}), has no benefits over the presented
formulation (\ref{eq:bgnweak}). Due to the presence of Gaussian curvature,
recall (\ref{eq:Willmore_flow}) and (\ref{eq:meanGaussS}), 
a weak formulation based on (\ref{eq:sdstabb})
would still involve the singular fraction 
$\frac{\pol\nu\,.\,\pol\ek_1}{\pol x\,.\,\pol\ek_1}$,
since $\pol x\,.\,\pol\ek_1\,\Gauss_{\mathcal{S}} = 
-(\varkappa_{\mathcal{S}}+\frac{\pol\nu\,.\,\pol\ek_1}{\pol x\,.\,\pol\ek_1}) 
\, \pol\nu\,.\,\pol\ek_1$.
Moreover, and in contrast to a formulation with (\ref{eq:bgnweakb}),
discretizations based on such a formulation would exhibit tangential motion
of vertices that does not lead to equidistribution, and which for linear fully
discrete schemes may lead to a breakdown of the scheme.

\subsubsection{Helfrich flow}
Helfrich flow is given as the surface area and volume preserving variant of 
(\ref{eq:Willmore_flow}). Its strong formulation can be written as
\begin{equation} \label{eq:Helfrich_flow}
\mathcal{V}_{\mathcal{S}} = -\Delta_{\mathcal{S}} \,k_m + 2\,
(k_m- \spont)\,k_g
-\tfrac{1}{2}\,(k_m^2 - \spont^2)\,k_m
+ \lambda_A\,k_m + \lambda_V
\quad\text{on }\ \mathcal{S}(t)\,,
\end{equation}
where $(\lambda_A(t),\lambda_V(t))^T \in \bR^2$ are chosen such that 
\begin{equation} \label{eq:sideSAV}
\mathcal{H}^2(\mathcal{S}(t)) = \mathcal{H}^2(\mathcal{S}(0))\,,\qquad
\mathcal{L}^3(\Omega(t)) = \mathcal{L}^3(\Omega(0))\,.
\end{equation}
On writing (\ref{eq:bgnweaka}) as
\[
\int_I (\pol x\,.\,\pol\ek_1)\,\pol x_t\,.\,\pol\nu\,\chi\,|\pol x_\rho| 
\drho
 - \int_I \pol x\,.\,\pol\ek_1 
\left[\varkappa - \frac{\pol\nu\,.\,\pol\ek_1}{\pol x\,.\,\pol\ek_1}
\right]_\rho \chi_\rho \,|\pol x_\rho|^{-1} \drho
= \int_I f\, \chi \,|\pol x_\rho|\drho
\]
a weak formulation of Helfrich flow is given as follows.

$(\BGNwf^{A,V})$:
Let $\pol x(0) \in \Vpartialzero$. For $t \in (0,T]$
find $\pol x(t) \in [H^1(I)]^2$, with $\pol x_t(t) \in \Vpartial$, 
and $\varkappa(t)\in H^1(I)$ such that
\begin{align} \label{eq:wfweaka}
& \int_I (\pol x\,.\,\pol\ek_1)\,\pol x_t\,.\,\pol\nu\,\chi\,|\pol x_\rho|
\drho
 - \int_I \pol x\,.\,\pol\ek_1 
\left[\varkappa - \frac{\pol\nu\,.\,\pol\ek_1}{\pol x\,.\,\pol\ek_1}
\right]_\rho \chi_\rho \,|\pol x_\rho|^{-1} \drho
\nonumber \\ & \hspace{1cm}
 = \int_I f\, \chi \,|\pol x_\rho| \drho
+ \lambda_A \int_I \pol x\,.\,\pol\ek_1
\left[\varkappa - \frac{\pol\nu\,.\,\pol\ek_1}{\pol x\,.\,\pol\ek_1}
\right] \chi \,|\pol x_\rho| \drho
+ \lambda_V\int_I\pol x\,.\,\pol\ek_1\, \chi \,|\pol x_\rho| \drho
\qquad \forall\ \chi \in H^1(I)
\end{align}
and (\ref{eq:bgnweakb}) hold, with $(\lambda_A(t),\lambda_V(t))^T \in \bR^2$ 
chosen such that (\ref{eq:sideSAV}) hold.

\setcounter{equation}{0}
\section{Semidiscrete schemes} \label{sec:sd}

Let $[0,1]=\cup_{j=1}^J I_j$, $J\geq3$, be a
decomposition of $[0,1]$ into intervals given by the nodes $q_j$,
$I_j=[q_{j-1},q_j]$. 
For simplicity, and without loss of generality,
we assume that the subintervals form an equipartitioning of $[0,1]$,
i.e.\ that 
\begin{equation} \label{eq:Jequi}
q_j = j\,h\,,\quad \mbox{with}\quad h = J^{-1}\,,\qquad j=0,\ldots, J\,.
\end{equation}
Clearly, if $I=\RZ$ we identify $0=q_0 = q_J=1$.

The necessary finite element spaces are defined as follows:
$V^h = \{\chi \in C(\overline I) : \chi\!\mid_{I_j} \
\text{is linear}\ \forall\ j=1\to J\}$ and 
$\Vh = [V^h]^2$,  
$\Vhpartialzero = \Vh \cap \Vpartialzero$,
$\Vhpartial = \Vh \cap \Vpartial$.
We also define
$W^h = V^h$,
$W^h_{\partial_0} = \{ \chi \in V^h : \chi(\rho) = 0
\quad \forall\ \rho \in \partial_0 I\}$,
$\underline W^h = \underline V^h$, 
$\underline W^h_{\partial_0} = [W^h_{\partial_0}]^2$.
Let $\{\chi_j\}_{j=j_0}^J$ denote the standard basis of $V^h$,
where $j_0 = 0$ if $I = (0,1)$ and $j_0 = 1$ if $I=\RZ$.
For later use, we let $\pi^h:C(\overline I)\to V^h$ 
be the standard interpolation operator at the nodes $\{q_j\}_{j=0}^J$.

Let $(\cdot,\cdot)$ denote the $L^2$--inner product on $I$, and 
define the mass lumped $L^2$--inner product $(f,g)^h$,
for two piecewise continuous functions, with possible jumps at the 
nodes $\{q_j\}_{j=1}^J$, via
\begin{equation}
( f, g )^h = \tfrac12\sum_{j=1}^J h_j\,
\left[(f\,g)(q_j^-) + (f\,g)(q_{j-1}^+)\right],
\label{eq:ip0}
\end{equation}
where we define
$f(q_j^\pm)=\underset{\delta\searrow 0}{\lim}\ f(q_j\pm\delta)$.
The definition (\ref{eq:ip0}) naturally extends to vector valued functions.

Let $(\pol X^h(t))_{t\in[0,T]}$, with $\pol X^h(t)\in \Vhpartialzero$, 
be an approximation to $(\pol x(t))_{t\in[0,T]}$ and define
$\Gamma^h(t) = \pol X^h(t)(\overline I)$. Throughout this section we assume
that
\begin{equation*} %\label{eq:Xhpos}
\pol X^h(\rho,t) \,.\,\pol\ek_1 > 0 \quad 
\forall\ \rho \in \overline I\setminus \partial_0 I\,,
\qquad \forall\ t \in [0,T]\,.
\end{equation*}
Assuming that $|\pol{X}^h_\rho| > 0$ almost everywhere on $I$,
and similarly to (\ref{eq:tau}), we set
\begin{equation} \label{eq:tauh}
\pol\tau^h = \pol X^h_s = \frac{\pol X^h_\rho}{|\pol X^h_\rho|} 
\qquad \mbox{and} \qquad \pol\nu^h = -(\pol\tau^h)^\perp\,.
\end{equation}
For later use, we let $\pol\omega^h \in \underline V^h$ be the mass-lumped 
$L^2$--projection of $\pol\nu^h$ onto $\underline V^h$, i.e.\
\begin{equation} \label{eq:omegah}
\left(\pol\omega^h, \pol\varphi \, |\pol X^h_\rho| \right)^h 
= \left( \pol\nu^h, \pol\varphi \, |\pol X^h_\rho| \right)
= \left( \pol\nu^h, \pol\varphi \, |\pol X^h_\rho| \right)^h
\qquad \forall\ \pol\varphi\in\underline V^h\,.
\end{equation}

Recall that
\begin{equation} \label{eq:Ah}
A(\pol Z^h) = 2\,\pi\left(\pol Z^h\,.\,\pol\ek_1 ,
|\pol Z^h_\rho|\right) \quad \pol Z^h \in \Vhpartialzero
\end{equation}
and
\begin{equation} \label{eq:Eh}
E(\pol X^h(t)) = A(\pol X^h(t)) 
+ 2\,\pi \sum_{p\in \partial_1 I} 
\sliprho^{(p)}\,(\pol X^h(p,t)\,.\,\pol\ek_1)\,\pol X^h(p,t)\,.\,\pol\ek_2
+ \pi \sum_{p\in \partial_2 I} 
\sliprho^{(p)}\,(\pol X^h(p,t)\,.\,\pol\ek_1)^2\,.
\end{equation}
We have, similarly to (\ref{eq:dEdt}), that
\begin{align}
\ddt\, E(\pol X^h(t)) & = 2\,\pi \left( \left[\pol X^h_t\,.\,\pol\ek_1
+ \pol X^h\,.\,\pol\ek_1 
\,\frac{(\pol X^h_t)_\rho\,.\,\pol X^h_\rho}
{|\pol X^h_\rho|^2} \right], |\pol X^h_\rho| \right)
\nonumber \\ & \qquad
+ 2\,\pi\,\sum_{p\in \partial_1 I} 
\sliprho^{(p)}\left[(\pol X^h_t(p,t)\,.\,\pol\ek_1)\,
\pol X^h(p,t)\,.\,\pol\ek_2 + 
(\pol X^h(p,t)\,.\,\pol\ek_1)\,
\pol X^h_t(p,t)\,.\,\pol\ek_2 \right]
\nonumber \\ & \qquad
+ 2\,\pi\,\sum_{p\in \partial_2 I} 
\sliprho^{(p)}\,(\pol X^h(p,t)\,.\,\pol\ek_1)\,
\pol X^h_t(p,t)\,.\,\pol\ek_1\,.
\label{eq:dEhdt}
\end{align}

In view of the degeneracy on the right hand side of (\ref{eq:kappaS}), and on
recalling (\ref{eq:bclimit}) and (\ref{eq:omegah}), we introduce,
given a $\kappa^h(t) \in V^h$, the function 
$\doctorkappa^h(\kappa^h(t),t) \in V^h$ such that
\begin{equation} \label{eq:calKh}
[\doctorkappa^h (\kappa^h(t),t)](q_j) = \begin{cases}
\dfrac{\pol\omega^h(q_j,t)\,.\,\pol\ek_1}{\pol X^h(q_j,t)\,.\,\pol\ek_1}
& q_j \in \overline I \setminus \partial_0 I\,, \\
- \kappa^h(q_j,t) & q_j \in \partial_0 I\,.
\end{cases}
\end{equation}

\subsection{Surface diffusion}

Our semidiscrete finite element approximation of $(\BGNsd)$,
(\ref{eq:sdweak}), is given as follows.

$(\BGNsd_h)^{(h)}$:
Let $\pol X^h(0) \in \Vhpartialzero$. For $t \in (0,T]$
find $\pol X^h(t) \in \Vh$, with $\pol X^h_t(t) \in \Vhpartial$, and
$\kappa^h(t) \in V^h$ such that
\begin{subequations} \label{eq:sdsd}
\begin{align}
&
\left((\pol X^h\,.\,\pol\ek_1)\,
\pol X^h_t, \chi\,\pol\nu^h\,|\pol X^h_\rho|\right)^{(h)}
= \left(\pol X^h\,.\,\pol\ek_1 \left[
\kappa^h - \doctorkappa^h(\kappa^h)\right]_\rho, 
\chi_\rho\,|\pol X^h_\rho|^{-1}\right)
\qquad \forall\ \chi \in V^h\,, \label{eq:sdsda}\\
&
\left(\kappa^h\,\pol\nu^h, \pol\eta\,|\pol X^h_\rho|\right)^{(h)}
+ \left(\pol X^h_\rho, \pol\eta_\rho\,|\pol X^h_\rho|^{-1}\right) 
= - \sum_{i=1}^2
\sum_{p \in \partial_i I} \sliprho^{(p)}\,\pol\eta(p)\,.\,\pol\ek_{3-i}
\qquad \forall\ \pol\eta \in \Vhpartial\,.
\label{eq:sdsdb}
\end{align}
\end{subequations}
Here, and throughout, we use the notation $\cdot^{(h)}$ to denote an 
expression with or without the superscript $h$. I.e.\ the scheme
$(\BGNsd_h)^h$ employs mass lumping on some terms, recall (\ref{eq:ip0}), while 
the scheme $(\BGNsd_h)$ employs true integration throughout.
We stress that the side condition (\ref{eq:sdsdb}), for $(\BGNsd_h)^{h}$,
leads to an equidistribution property; see Remark~\ref{rem:equid} below.

For later use we observe that
\begin{align} \label{eq:Vh}
\mathcal{L}^3(\Omega^h(t)) & = 2\,\pi\,\int_{A^h(t)} \pol\id\,.\,\pol\ek_1
\dL{2} = \pi\,\int_{A^h(t)} \nabla\,.\,
\left[(\pol\id\,.\,\pol\ek_1)^2\,\pol\ek_1\right] \dL{2}
\nonumber \\ &
= \pi\,\int_{\Gamma^h(t)} (\pol\id\,.\,\pol\ek_1)^2\,
\pol\nu^h\,.\,\pol\ek_1 \dH{1} 
= \pi\,\int_I (\pol X^h\,.\,\pol\ek_1)^2\,\pol\nu^h\,.\,\pol\ek_1 \,
|\pol X^h_\rho| \drho\,,
\end{align}
where $A^h(t) \subset \bR^2$ denotes the domain enclosed by 
$\Gamma^h(t) = \pol X^h(\overline I)$, and where $\pol\nu^h(t)$ denotes the 
outer normal to $A^h(t)$ on $\partial A^h(t) = \Gamma^h(t)$. 
Of course, $\Omega^h(t) \subset \bR^3$ denotes 
the domain that is enclosed by the three-dimensional axisymmetric surface 
$\mathcal{S}^h(t)$ that is generated by the curve $\Gamma^h(t)$, i.e.\
$\mathcal{S}^h(t) = \partial\Omega^h(t)$.
Moreover, on recalling (\ref{eq:dVdtsd}), we note that
\begin{equation} \label{eq:dVhdt}
\ddt\,\mathcal{L}^3(\Omega^h(t)) = 
\int_{\mathcal{S}^h(t)} \mathcal{V}^h_{\mathcal{S}^h} \dH{2}
= 2\,\pi\left( \pol X^h\,.\,\pol\ek_1, 
\pol X^h_t\,.\,\pol\nu^h\,|\pol X^h_\rho| \right),
\end{equation}
where $\mathcal{V}^h_{\mathcal{S}^h}(t)$ denotes the normal velocity of
$\mathcal{S}^h(t)$ in the direction of $\pol\nu^h_{\mathcal{S}^h}(t)$,
the outer normal to $\Omega^h(t)$ on $\mathcal{S}^h(t)$.

Choosing $\chi = 1$ in (\ref{eq:sdsda}) yields that
\begin{equation} \label{eq:constchi}
\left(\pol X^h\,.\,\pol\ek_1,
\pol X^h_t\,.\, \pol\nu^h\,|\pol X^h_\rho|\right)^{(h)} = 0\,.
\end{equation}
Comparing (\ref{eq:dVhdt}) and (\ref{eq:constchi}), we observe that due to
mass lumping being employed in (\ref{eq:sdsda}) for
$(\BGNsd_h)^{h}$, it is not possible to
prove exact volume conservation for $(\BGNsd_h)^{h}$.
On the other hand, for the semidiscrete scheme $(\BGNsd_h)$
we obtain exact volume preservation.
We note that in practice the fully discrete variants of 
both $(\BGNsd_h)^{h}$ and $(\BGNsd_h)$, for reasonable
meshes, have excellent volume conserving properties. 

Our semidiscrete finite element approximation of $(\BGNsdstab)$,
(\ref{eq:sdstabab}), is given as follows.

$(\BGNsdstab_h)^{(h)}$:
Let $\pol X^h(0) \in \Vhpartialzero$. For $t \in (0,T]$
find $\pol X^h(t) \in \Vh$, with $\pol X^h_t(t) \in \Vhpartial$, and
$\kappa_{\mathcal{S}}^h(t) \in V^h$ such that
\begin{subequations} \label{eq:sdsds}
\begin{align}
&
\left((\pol X^h\,.\,\pol\ek_1)\,
\pol X^h_t, \chi\,\pol\nu^h\,|\pol X^h_\rho|\right)^{(h)}
= \left(\pol X^h\,.\,\pol\ek_1 \left[
\kappa_{\mathcal{S}}^h \right]_\rho, 
\chi_\rho\,|\pol X^h_\rho|^{-1}\right)
\qquad \forall\ \chi \in V^h\,, \label{eq:sdsdsa}\\
&
\left(\pol X^h\,.\,\pol\ek_1\,
\kappa_{\mathcal{S}}^h\,\pol\nu^h, \pol\eta\,|\pol X^h_\rho|\right)^{(h)}
+ \left( \pol\eta \,.\,\pol\ek_1, |\pol X^h_\rho|\right)
+ \left( (\pol X^h\,.\,\pol\ek_1)\,
\pol X^h_\rho,\pol\eta_\rho\, |\pol X^h_\rho|^{-1} \right) 
= - \sum_{i=1}^2
\sum_{p \in \partial_i I} \sliprho^{(p)}\,
(\pol X^h(p,t)\,.\,\pol\ek_1)\,\pol\eta(p)\,.\,\pol\ek_{3-i}
\qquad \forall\ \pol\eta \in \Vhpartial\,.
\label{eq:sdsdsb}
\end{align}
\end{subequations}
Choosing $\chi = 1$ in (\ref{eq:sdsdsa}), on recalling (\ref{eq:dVhdt}),
yields exact volume conservation for the scheme $(\BGNsdstab_h)$.
Moreover, in contrast to $(\BGNsd_h)^{(h)}$,
it is possible to prove a stability bound for 
$(\BGNsdstab_h)^{(h)}$. 
To this end, choose $\chi = \kappa_{\mathcal{S}}^h$ in
(\ref{eq:sdsdsa}) and $\pol\eta = \pol X^h_t$ in (\ref{eq:sdsdsb}) to obtain,
on recalling (\ref{eq:dEhdt}), that
\begin{equation*} % \label{eq:sdsdstab}
\ddt\, E(\pol X^h(t)) 
= - 2\,\pi \left(
\pol X^h\,.\,\pol\ek_1\,|(\kappa_{\mathcal{S}}^h)_\rho|^2, 
|\pol X^h_\rho|^{-1} \right) \leq 0\,. 
\end{equation*}

\begin{rem} \label{rem:equid}
Let $\pol{h}_j(t) = \pol{X}^h(q_j,t) - \pol{X}^h(q_{j-1},t)$ 
for $j = 1,\ldots, J$, and set $\pol h_0 = \pol h_J$ if 
$\partial I = \emptyset$. Then, if 
$(\pol X^h(t), \kappa^h(t)) \in \Vh \times V^h$ satisfies
{\rm (\ref{eq:sdsdb})}, for $(\BGNsd_h)^{h}$, 
it holds that
\begin{equation}
|\pol{h}_j(t)| = |\pol{h}_{j - 1}(t)| \quad \mbox{if} \quad 
\pol{h}_j(t) \nparallel \pol{h}_{j - 1}(t) \quad 
\begin{cases} 
j = 1 ,\ldots, J & \partial I = \emptyset\,, \\
j = 2 ,\ldots, J & \partial I \not= \emptyset\,.
\end{cases}
\label{eq:equid}
\end{equation}
The equidistribution property {\rm (\ref{eq:equid})} can be shown by choosing
$\pol\eta = \chi_{j-1}\,[\pol\omega^h(q_{j-1},t)]^\perp \in \Vhpartial$ 
in {\rm (\ref{eq:sdsdb})}, recall {\rm (\ref{eq:omegah})}. 
See also \cite[Remark~2.4]{triplej} for more details.
We stress that {\rm (\ref{eq:sdsdsb})}, even for $(\BGNsdstab_h)^{h}$, 
does not lead to an equidistribution property for $\Gamma^h(t)$.
\end{rem}

For the reader's convenience, Table~\ref{tab:schemesSD} summarises the
main properties of all the schemes introduced in Section~\ref{sec:sd}.
\begin{table}
\center
\caption{Properties of the different semidiscrete schemes for the evolution
laws (\ref{eq:sdS}), (\ref{eq:SALK}), (\ref{eq:Willmore_flow}) and
(\ref{eq:Helfrich_flow}).
Note that subscripts refer to
semidiscretization, whereas superscripts indicate numerical integration, recall
(\ref{eq:ip0}).
}
\begin{tabular}{llccc}
\hline
\multicolumn{2}{c}{scheme} & flow
 & stability proof & equidistribution \\ \hline
$(\BGNsd_h)^h$ / $(\BGNsd_h)$  & (\ref{eq:sdsd}) & (\ref{eq:sdS}) & no &  yes / no \\
$(\BGNsdstab_h)^h$ / $(\BGNsdstab_h)$ & (\ref{eq:sdsds}) & (\ref{eq:sdS}) &  yes & no \\
$(\BGNintstab_h)^h$ / $(\BGNintstab_h)$ & (\ref{eq:sdint}) & (\ref{eq:SALK}) 
 & yes & no \\
$(\BGNwf_h)^h$ & (\ref{eq:bgnsd}) & (\ref{eq:Willmore_flow}) & no & yes \\
$(\BGNwf^{A,V}_h)^h$ &  (\ref{eq:wfsda}), (\ref{eq:lambdamuh}) 
& (\ref{eq:Helfrich_flow}) & no & yes \\
\end{tabular}
\label{tab:schemesSD}
\end{table}%

\subsection{Intermediate evolution law}

It is straightforward to adapt the semidiscrete schemes
$(\BGNsd)^h$ and $(\BGNsdstab)^{(h)}$ to the flow (\ref{eq:salknew}). 
For example, a semidiscrete finite element approximation of $(\BGNintstab)$,
(\ref{eq:intstababc}), that is based on $(\BGNsdstab_h)^{(h)}$, 
is given as follows.

$(\BGNintstab_h)^{(h)}$:
Let $\pol X^h(0) \in \Vhpartialzero$. For $t \in (0,T]$
find $\pol X^h(t) \in \Vh$, with $\pol X^h_t(t) \in \Vhpartial$, and
$(Y^h(t),\kappa_{\mathcal{S}}^h(t)) \in [V^h]^2$ such that
\begin{subequations} \label{eq:sdint}
\begin{align}
&
\left((\pol X^h\,.\,\pol\ek_1)\,
\pol X^h_t, \chi\,\pol\nu^h\,|\pol X^h_\rho|\right)^{(h)}
= \left(\pol X^h\,.\,\pol\ek_1\, Y^h_\rho, \chi_\rho\,|\pol X^h_\rho|^{-1}\right)
\qquad \forall\ \chi \in V^h\,, \label{eq:sdinta}\\
&
\tfrac1\xi \left(\pol X^h\,.\,\pol\ek_1 \, Y^h_\rho, 
\zeta_\rho\,|\pol X^h_\rho|^{-1}\right)
+ \left(\pol X^h\,.\,\pol\ek_1 \left[ \alpha^{-1}\,Y^h -
\kappa_{\mathcal{S}}^h \right], 
\zeta\,|\pol X^h_\rho|\right)^{(h)}  = 0
\qquad \forall\ \zeta \in V^h\,, \label{eq:sdintb}\\
& \left(\pol X^h\,.\,\pol\ek_1\,
\kappa_{\mathcal{S}}^h\,\pol\nu^h, \pol\eta\,|\pol X^h_\rho|\right)^{(h)}
+ \left( \pol\eta \,.\,\pol\ek_1, |\pol X^h_\rho|\right)
+ \left( (\pol X^h\,.\,\pol\ek_1)\,
\pol X^h_\rho,\pol\eta_\rho\, |\pol X^h_\rho|^{-1} \right) 
= - \sum_{i=1}^2
\sum_{p \in \partial_i I} \sliprho^{(p)}\,
(\pol X^h(p,t)\,.\,\pol\ek_1)\,\pol\eta(p)\,.\,\pol\ek_{3-i}
\qquad \forall\ \pol\eta \in \Vhpartial\,.
\label{eq:sdintc}
\end{align}
\end{subequations}
Choosing $\chi = 1$ in (\ref{eq:sdinta}), on recalling (\ref{eq:dVhdt}),
yields exact volume conservation for the scheme $(\BGNintstab_h)$.
Moreover, it is possible to prove a stability bound for 
$(\BGNintstab_h)^{(h)}$. 
To this end, choose $\chi = \tfrac{\alpha}\xi\,\kappa_{\mathcal{S}}^h$ in
(\ref{eq:sdinta}),
$\zeta=\alpha\,\kappa_{\mathcal{S}}^h-Y^h$ in (\ref{eq:sdintb}) 
and $\pol\eta = \pol X^h_t$ in (\ref{eq:sdintc}) to obtain,
on recalling (\ref{eq:dEhdt}), that
\begin{equation*} %\label{eq:sdintstab}
\frac1{2\,\pi}\,\ddt\, E(\pol X^h(t)) 
= - \frac1\alpha \left(
\pol X^h\,.\,\pol\ek_1\,|Y^h_\rho|^2, |\pol X^h_\rho|^{-1} \right) 
 - \xi \left( \pol X^h\,.\,\pol\ek_1\,
|\kappa^h_{\mathcal{S}} - \tfrac1\alpha\,Y^h|^2 , |\pol X^h_\rho| 
\right)^{(h)} \leq 0\,,
\end{equation*}
which is a discrete analogue of (\ref{eq:intstab}). 

\subsection{Willmore flow}
Our semidiscrete finite element approximation of $(\BGNwf)$,
(\ref{eq:bgnweak}), is given as follows, where we recall that 
$\partial I = \partial_0 I$, and so $\pol X^h(t) \in \Vhpartial$ for all
$t\in[0,T]$.

$(\BGNwf_h)^h$:
Let $\pol X^h(0) \in \Vhpartial$. For $t \in (0,T]$
find $\pol X^h(t) \in \Vh$, with $\pol X^h_t(t) \in \Vhpartial$, and
$\kappa^h(t) \in V^h$ such that
\begin{subequations} \label{eq:bgnsd}
\begin{align}
&
\left( (\pol X^h\,.\,\pol\ek_1)\,\pol X^h_t, \chi\,\pol\nu^h \, |\pol X^h_\rho|
\right)^h
- \left(\pol X^h\,.\,\pol\ek_1 \left[
\kappa^h - \doctorkappa^h(\kappa^h) \right]_\rho, 
\chi_\rho\, |\pol X^h_\rho|^{-1} \right)
= - 2 \left( \left[\kappa^h - 
\frac{\pol\omega^h\,.\,\pol\ek_1}{\pol X^h\,.\,\pol\ek_1} - \spont\right]
\kappa^h\,\pol\omega^h\,.\,\pol\ek_1, \chi \, |\pol X^h_\rho| \right)^h
\nonumber \\ & \hspace{4cm}
- \tfrac12 \left( \pol X^h\,.\,\pol\ek_1
\left(
\left[\kappa^h - \frac{\pol\omega^h\,.\,\pol\ek_1}{\pol X^h\,.\,\pol\ek_1}
\right]^2 - \spont^2\right)
\left[\kappa^h - \frac{\pol\omega^h\,.\,\pol\ek_1}{\pol X^h\,.\,\pol\ek_1}
\right]
, \chi \, |\pol X^h_\rho| \right)^h
\qquad \forall\ \chi \in V^h\,, \label{eq:bgnsda}\\
&
\left( \kappa^h\,\pol\nu^h, \pol\eta \, |\pol X^h_\rho| \right)^h
+ \left( \pol X^h_\rho, \pol\eta_\rho \, |\pol X^h_\rho|^{-1}\right) = 0 
\qquad \forall\ \pol\eta \in \Vhpartial\,.
\label{eq:bgnsdb}
\end{align}
\end{subequations}
We recall from Remark~\ref{rem:equid} that (\ref{eq:bgnsdb}) leads to the 
equidistribution property (\ref{eq:equid}). For this reason we only consider
the variant $(\BGNwf_h)^h$ with mass lumping.

\subsubsection{Helfrich flow}
On re-writing (\ref{eq:bgnsda}) as
\[
\left( (\pol X^h\,.\,\pol\ek_1)\,\pol X^h_t, \chi\,\pol\nu^h\,|\pol X^h_\rho|
\right)^h
- \left( \pol X^h\,.\,\pol\ek_1  \left[
\kappa^h - \doctorkappa^h(\kappa^h) \right]_\rho, 
\chi_\rho \, |\pol X^h_\rho|^{-1} \right)
 = \left( f^h, \chi \, |\pol X^h_\rho| \right)^h ,
\]
we consider the following semidiscrete finite element approximation of
$(\BGNwf^{A,V})$, (\ref{eq:wfweaka}), (\ref{eq:bgnweakb}).

$(\BGNwf_h^{A,V})^h$:
Let $\pol X^h(0) \in \Vhpartial$. For $t \in (0,T]$
find $\pol X^h(t) \in \Vh$, with $\pol X^h_t(t) \in \Vhpartial$, and
$(\kappa^h(t), \lambda_A^h(t), \lambda_V^h(t)) 
\in V^h \times \bR^2$ such that
\begin{align}
&\left( (\pol X^h\,.\,\pol\ek_1)\,\pol X^h_t, \chi\,\pol\nu^h \,|\pol X^h_\rho|
\right)^h
- \left( \pol X^h\,.\,\pol\ek_1 \left[
\kappa^h - \doctorkappa^{h}(\kappa^h)\right]_\rho, 
\chi_\rho \, |\pol X^h_\rho|^{-1}\right)
\nonumber \\ & \qquad
 = \left( f^h, \chi \, |\pol X^h_\rho| \right)
+ \lambda_A^h
\left( \pol X^h\,.\,\pol\ek_1\left[
\kappa^h - \doctorkappa^{h}(\kappa^h) \right],
\chi \, |\pol X^h_\rho| \right)^h
+ \lambda_V^h\left(\pol X^h\,.\,\pol\ek_1,\chi \, |\pol X^h_\rho| 
\right)^h
\qquad \forall\ \chi \in V^h\,,
\label{eq:wfsda}
\end{align}
where $(\lambda_A^h,\lambda_V^h)^T \in \bR^2$ are such that
\begin{equation} \label{eq:sideSAVh}
\mathcal{H}^2(\mathcal{S}^h(t)) = \mathcal{H}^2(\mathcal{S}^h(0))\,, \qquad
\mathcal{L}^3(\Omega^h(t)) = \mathcal{L}^3(\Omega^h(0))\,.
\end{equation}
Here we note that (\ref{eq:sideSAVh}) can be equivalently formulated as
\begin{subequations} \label{eq:lambdamuh}
\begin{align}
A(\pol X^h(t)) &= A(\pol X^h(0))\,,
\\
V(\pol X^h(t)) & = V(\pol X^h(0))\,,\quad
V(\pol Z^h) = -\pi \left( (\pol Z^h\,.\,\pol\ek_1)^2, 
[\pol Z^h_\rho]^\perp\,.\,\pol\ek_1\right) \quad \pol Z^h \in \Vhpartial
\,, %\label{eq:lambdamuhb} 
\end{align}
\end{subequations}
where we have recalled (\ref{eq:Ah}), (\ref{eq:tauh}) and (\ref{eq:Vh}).

\setcounter{equation}{0}
\section{Fully discrete schemes} \label{sec:fd}

Let $0= t_0 < t_1 < \ldots < t_{M-1} < t_M = T$ be a
partitioning of $[0,T]$ into possibly variable time steps 
$\ttau_m = t_{m+1} - t_{m}$, $m=0\to M-1$. 
We set $\ttau = \max_{m=0\to M-1}\ttau_m$.
For a given $\pol{X}^m\in \Vhpartialzero$ we set
$\pol\nu^m = - \frac{[\pol X^m_\rho]^\perp}{|\pol X^m_\rho|}$.
Let $\pol\omega^m \in \Vh$ be the natural fully discrete analogue of
$\pol\omega^h \in \Vh$, recall (\ref{eq:omegah}). 

Similarly to (\ref{eq:calKh}), and given a $\kappa^{m+1} \in V^h$,
we introduce $\doctorkappa^{m}(\kappa^{m+1}) \in V^h$ such that
\begin{equation*} %\label{eq:calKm}
[\doctorkappa^{m}(\kappa^{m+1})](q_j) = \begin{cases}
\dfrac{\pol\omega^m(q_j)\,.\,\pol\ek_1}{\pol X^m(q_j)\,.\,\pol\ek_1}
& q_j \in \overline I \setminus \partial_0 I\,, \\
- \kappa^{m+1}(q_j) & q_j \in \partial_0 I\,.
\end{cases}
\end{equation*}

\subsection{Surface diffusion}

Our fully discrete analogue of the scheme
$(\BGNsd_h)^{(h)}$, (\ref{eq:sdsd}), is given as follows.

$(\BGNsd_m)^{(h)}$:
Let $\pol X^0 \in \Vhpartialzero$. For $m=0,\ldots,M-1$, 
find $(\delta\pol X^{m+1}, \kappa^{m+1}) \in \Vhpartial \times V^h$,
where $\pol X^{m+1} = \pol X^m + \delta \pol X^{m+1}$, 
such that
\begin{subequations} \label{eq:sdfd}
\begin{align}
&
\left(\pol X^m\,.\,\pol\ek_1\,
\frac{\pol X^{m+1} - \pol X^m}{\ttau_m}, \chi\,\pol\nu^m\,|\pol
X^m_\rho|\right)^{(h)}
= \left( \pol X^m\,.\,\pol\ek_1 \left[
\kappa^{m+1} - \doctorkappa^{m}(\kappa^{m+1})\right]_\rho ,
\chi_\rho\,|\pol X^m_\rho|^{-1}\right)
\qquad \forall\ \chi \in V^h\,, %\label{eq:sdfda}
\\
&
\left(\kappa^{m+1}\,\pol\nu^m, \pol\eta\,|\pol X^m_\rho|\right)^{(h)}
+ \left(\pol X^{m+1}_\rho, \pol\eta_\rho\,|\pol X^m_\rho|^{-1}\right) 
= - \sum_{i=1}^2 
\sum_{p \in \partial_i I} \sliprho^{(p)}\,\pol\eta(p)\,.\,\pol\ek_{3-i}
\qquad \forall\ \pol\eta \in \Vhpartial\,.
\end{align}
\end{subequations}
We note that it does not appear possible to prove the existence of a
unique solution to $(\BGNsd_m)^{(h)}$. 
However, despite the lack of a mathematical proof,
in practice the linear system (\ref{eq:sdfd}) is always invertible.

Our fully discrete analogues of the scheme
$(\BGNsdstab_h)^{(h)}$, (\ref{eq:sdsds}), are given as follows.

$(\BGNsdstab_m)^{(h)}$:
Let $\pol X^0 \in \Vhpartialzero$. For $m=0,\ldots,M-1$, 
find $(\delta\pol X^{m+1}, \kappa_{\mathcal{S}}^{m+1}) 
\in \Vhpartial \times V^h$, 
where $\pol X^{m+1} = \pol X^m + \delta \pol X^{m+1}$, such that
\begin{subequations} \label{eq:fdsds}
\begin{align}
&
\left(\pol X^m\,.\,\pol\ek_1\,\frac{\pol X^{m+1} - \pol X^m}{\ttau_m},
\chi\,\pol\nu^m\,|\pol X^m_\rho|\right)^{(h)}
= \left(\pol X^m\,.\,\pol\ek_1 \left[
\kappa_{\mathcal{S}}^{m+1} \right]_\rho, 
\chi_\rho\,|\pol X^m_\rho|^{-1}\right)
\qquad \forall\ \chi \in V^h\,, %\label{eq:fdsdsa}
\\
&
\left(\pol X^m\,.\,\pol\ek_1\,
\kappa_{\mathcal{S}}^{m+1}\,\pol\nu^m, \pol\eta\,|\pol X^m_\rho|\right)^{(h)}
+ \left( \pol\eta \,.\,\pol\ek_1, |\pol X^m_\rho|\right)
+ \left( (\pol X^m\,.\,\pol\ek_1)\,
\pol X^{m+1}_\rho,\pol\eta_\rho\, |\pol X^m_\rho|^{-1} \right) 
\nonumber \\ & \hspace{4cm}
= - \sum_{i=1}^2 \sum_{p \in \partial_i I} \sliprho^{(p)}\,
(\pol X^m(p)\,.\,\pol\ek_1)\,\pol\eta(p)\,.\,\pol\ek_{3-i}
\qquad \forall\ \pol\eta \in \Vhpartial\,.
\end{align}
\end{subequations}

For the second variant, which is going to lead to systems of nonlinear
equations and for which a stability result can be shown, we introduce
the notation 
$[ r ]_\pm = \pm \max \{ \pm r, 0 \}$ for $r \in \bR$.

$(\BGNsdstab_{m,\star})^{(h)}$:
Let $\pol X^0 \in \Vhpartialzero$. For $m=0,\ldots,M-1$, 
find $(\delta\pol X^{m+1}, \kappa_{\mathcal{S}}^{m+1}) 
\in \Vhpartial \times V^h$, 
where $\pol X^{m+1} = \pol X^m + \delta \pol X^{m+1}$, such that
\begin{subequations} % \label{eq:fdsdsnonlinear}
\begin{align}
&
\left(\pol X^m\,.\,\pol\ek_1\,\frac{\pol X^{m+1} - \pol X^m}{\ttau_m},
\chi\,\pol\nu^m\,|\pol X^m_\rho|\right)^{(h)}
= \left(\pol X^m\,.\,\pol\ek_1 \left[
\kappa_{\mathcal{S}}^{m+1} \right]_\rho, 
\chi_\rho\,|\pol X^m_\rho|^{-1}\right)
\qquad \forall\ \chi \in V^h\,, \label{eq:fdsdsnonlineara}\\
&
\left(\pol X^m\,.\,\pol\ek_1\,
\kappa_{\mathcal{S}}^{m+1}\,\pol\nu^m, \pol\eta\,|\pol X^m_\rho|\right)^{(h)}
+ \left( \pol\eta \,.\,\pol\ek_1, |\pol X^{m+1}_\rho|\right)
+ \left( (\pol X^m\,.\,\pol\ek_1)\,
\pol X^{m+1}_\rho,\pol\eta_\rho\, |\pol X^m_\rho|^{-1} \right) 
\nonumber \\ & \qquad
= - \sum_{p \in \partial_1 I} \sliprho^{(p)}\,
(\pol X^m(p)\,.\,\pol\ek_1)\,\pol\eta(p)\,.\,\pol\ek_2
 - \sum_{p \in \partial_2 I} ( ( [\sliprho^{(p)}]_+ \,
\,\pol X^{m+1}(p)
+ [\sliprho^{(p)}]_- \,
\,\pol X^{m}(p))
\,.\,\pol\ek_1)\,\pol\eta(p)\,.\,\pol\ek_1
\qquad \forall\ \pol\eta \in \Vhpartial\,.
\label{eq:fdsdsnonlinearb}
\end{align}
\end{subequations}

We state the following mild assumptions. 
\begin{tabbing}
$(\mathfrak A)$ \qquad\quad \= Let
$|\pol{X}^m_\rho| > 0$ for almost all $\rho\in I$, and let
$\pol{X}^m \,.\,\pol\ek_1 > 0$ for all $\rho\in \overline I \setminus
\partial_0 I$.\\
$(\mathfrak B)^{(h)}$ \quad \>
Let $\mathcal Z^{(h)} = 
\left\{ \left( (\pol X^m\,.\,\pol\ek_1)\,\pol\nu^m,\chi\, 
|\pol X^m_\rho| \right)^{(h)} : \chi \in V^h \right
\} \subset \bR^2$ and assume that \\ \>
$\dim \spa \mathcal Z^{(h)} = 2$. 
\end{tabbing}
Note that the assumption $(\mathfrak B)^{h}$, on recalling
(\ref{eq:omegah}), is equivalent to assuming that 
\linebreak
$\dim \spa\{\pol{\omega}^m(q_j)\}_{j = 1\,\ldots,J}= 2$.

\begin{lem} \label{lem:exsd}
Let $\pol X^m \in\Vhpartialzero$ satisfy the assumptions $(\mathfrak A)$ and
$(\mathfrak B)^{(h)}$.
Then there exists a unique solution
$(\delta\pol X^{m+1},$ $\kappa_{\mathcal{S}}^{m+1}) \in \Vhpartial \times V^h$ 
to $(\BGNsdstab_m)^{(h)}$.
\end{lem}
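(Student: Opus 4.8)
The scheme $(\BGNsdstab_m)^{(h)}$, (\ref{eq:fdsds}), is a \emph{linear} system for $(\delta\pol X^{m+1},\kappa_{\mathcal S}^{m+1})$ in the finite-dimensional space $\Vhpartial\times V^h$, and the number of scalar equations (one for each $\chi\in V^h$ and one for each $\pol\eta\in\Vhpartial$) equals $\dim(\Vhpartial\times V^h)$. Hence existence of a unique solution is equivalent to showing that the associated homogeneous system — obtained from (\ref{eq:fdsds}) by writing $\pol X^{m+1}=\pol X^m+\delta\pol X^{m+1}$ and setting to zero all terms that do not involve $(\delta\pol X^{m+1},\kappa_{\mathcal S}^{m+1})$ — has only the trivial solution. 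So I assume $(\delta\pol X,\kappa)\in\Vhpartial\times V^h$ solves the homogeneous system and aim to prove $(\delta\pol X,\kappa)=(\pol 0,0)$.

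The heart of the proof is the standard energy identity: choosing $\chi=\kappa$ in the homogeneous version of the first equation and $\pol\eta=\delta\pol X$ in the homogeneous version of the second, and adding, the indefinite cross term $\left(\pol X^m\,.\,\pol\ek_1\,\delta\pol X,\kappa\,\pol\nu^m\,|\pol X^m_\rho|\right)^{(h)}$ cancels (it occurs in both, being symmetric in its two vector arguments), leaving
\[
\ttau_m\left(\pol X^m\,.\,\pol\ek_1\,\kappa_\rho,\kappa_\rho\,|\pol X^m_\rho|^{-1}\right)+\left((\pol X^m\,.\,\pol\ek_1)\,(\delta\pol X)_\rho,(\delta\pol X)_\rho\,|\pol X^m_\rho|^{-1}\right)=0.
\]
By assumption $(\mathfrak A)$ we have $\pol X^m\,.\,\pol\ek_1>0$ and $|\pol X^m_\rho|>0$ almost everywhere on $I$, so both non-negative terms vanish and therefore $\kappa_\rho=0$ and $(\delta\pol X)_\rho=\pol 0$ a.e.\ on $I$. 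Since $\kappa\in V^h$ and $\delta\pol X\in\Vhpartial$ are continuous on $\overline I$ and $I$ is connected, this gives $\kappa\equiv c$ for some $c\in\bR$ and $\delta\pol X\equiv\pol v$ for some $\pol v\in\bR^2$.

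It remains to kill the two constant modes, and this is precisely what $(\mathfrak B)^{(h)}$ is for. Inserting $\delta\pol X\equiv\pol v$ and $\kappa\equiv c$ into the homogeneous first equation, its right-hand side vanishes ($c_\rho=0$) and we obtain $\pol v\,.\,\left(\pol X^m\,.\,\pol\ek_1\,\pol\nu^m,\chi\,|\pol X^m_\rho|\right)^{(h)}=0$ for all $\chi\in V^h$, i.e.\ $\pol v$ is orthogonal to every vector in $\mathcal Z^{(h)}$; since $\dim\spa\mathcal Z^{(h)}=2$ this forces $\pol v=\pol 0$, hence $\delta\pol X=\pol 0$. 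Inserting $\delta\pol X=\pol 0$ and $\kappa\equiv c$ into the homogeneous second equation now yields $c\,\left(\pol X^m\,.\,\pol\ek_1\,\pol\nu^m,\pol\eta\,|\pol X^m_\rho|\right)^{(h)}=0$ for all $\pol\eta\in\Vhpartial$. The plan is to conclude $c=0$ by showing that $(\mathfrak B)^{(h)}$ also rules out $(\pol X^m\,.\,\pol\ek_1)\,\pol\nu^m$ being $(\cdot,\cdot)^{(h)}$-orthogonal to all of $\Vhpartial$: such an orthogonality would in particular force $\left(\pol X^m\,.\,\pol\ek_1\,\pol\nu^m,\chi_j\,|\pol X^m_\rho|\right)^{(h)}=\pol 0$ at every interior node $q_j$ (and at the boundary nodes in the directions admitted by $\Vhpartial$); but, e.g.\ in the mass-lumped case, $\left(\pol X^m\,.\,\pol\ek_1\,\pol\nu^m,\chi_j\,|\pol X^m_\rho|\right)^{h}$ is a nonzero multiple of $(\pol X^m(q_j)\,.\,\pol\ek_1)\,[\pol X^m(q_{j+1})-\pol X^m(q_{j-1})]^\perp$, so this can only happen for degenerate (``folded'') polygonal curves, which one checks are incompatible with $\dim\spa\mathcal Z^{(h)}=2$. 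Hence $c=0$, the homogeneous system has only the trivial solution, and the claimed existence and uniqueness follow.

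I expect the last step — forcing the constant curvature mode $c$ to vanish — to be the main obstacle. The vanishing of $\pol v$ is an immediate consequence of $(\mathfrak B)^{(h)}$, but $c=0$ needs the slightly more indirect observation that $(\mathfrak B)^{(h)}$ also precludes the degenerate configurations in which $(\pol X^m\,.\,\pol\ek_1)\,\pol\nu^m\perp\Vhpartial$; the exact bookkeeping here depends mildly on which of $\partial_0 I,\partial_1 I,\partial_2 I,\partial_D I$ are nonempty. The remaining steps are routine.
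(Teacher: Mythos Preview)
Your proof follows essentially the same route as the paper's. Both arguments observe that the system is linear and square, pass to the homogeneous system, and derive the key energy identity by choosing $\chi=\kappa_{\mathcal S}$ in the first equation and $\pol\eta=\delta\pol X$ in the second; the cross term cancels and assumption $(\mathfrak A)$ forces $\delta\pol X$ and $\kappa_{\mathcal S}$ to be constants, after which $(\mathfrak B)^{(h)}$ is invoked to kill them. The one place where you diverge is in tone rather than substance: you flag the elimination of the constant curvature mode $c$ as the ``main obstacle'' and sketch a geometric argument about folded polygonal curves, whereas the paper simply notes that since $\mathcal Z^{(h)}$ contains a nonzero vector, the pairing $\left(\pol X^m\,.\,\pol\ek_1\,\kappa^c\,\pol\nu^m,\pol\eta\,|\pol X^m_\rho|\right)^{(h)}$ cannot vanish for every $\pol\eta\in\Vhpartial$, and concludes $\kappa^c=0$ in one line. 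Your caution about the mismatch between the test space $\Vhpartial$ appearing in the second equation and the space $V^h$ used to define $\mathcal Z^{(h)}$ is not misplaced --- that link does deserve a word --- but the paper evidently regards it as routine, and your sketch (testing with $\chi_j\,\pol\ek_i\in\Vhpartial$ at interior nodes, then arguing the residual boundary contributions cannot span $\bR^2$) is the right way to close it.
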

\begin{proof}
As (\ref{eq:fdsds}) is linear, existence follows from uniqueness. 
To investigate the latter, we consider the system: 
Find $(\delta\pol X, \kappa_{\mathcal{S}}) \in \Vhpartial\times V^h$ 
such that
\begin{subequations} %\label{eq:proofsd}
\begin{align}
&
\left(\pol X^m\,.\,\pol\ek_1\,\frac{\delta\pol X}{\ttau_m}, 
\chi\,\pol\nu^m\,|\pol X^m_\rho|\right)^{(h)}
= \left(\pol X^m\,.\,\pol\ek_1\,[\kappa_{\mathcal{S}}]_\rho,
\chi_\rho\,|\pol X^m_\rho|^{-1}\right)
\qquad \forall\ \chi \in V^h\,,
\label{eq:proofsda} \\
& \left(\pol X^m\,.\,\pol\ek_1\,\kappa_{\mathcal{S}}\,\pol\nu^m,
\pol\eta\,|\pol X^m_\rho|\right)^{(h)}
+ \left((\pol X^m\,.\,\pol\ek_1)\,
(\delta\pol X)_\rho, \pol\eta_\rho\,|\pol X^m_\rho|^{-1}\right) 
= 0 \qquad \forall\ \pol\eta \in \Vhpartial\,.
\label{eq:proofsdb}
\end{align}
\end{subequations}
Choosing $\chi = \kappa_{\mathcal{S}} \in V^h$ in (\ref{eq:proofsda}) and 
$\pol\eta= \delta\pol X \in \Vhpartial$ in (\ref{eq:proofsdb}) yields that
\begin{equation} \label{eq:uniquesd0}
\ttau_m
\left( \pol X^m\,.\,\pol\ek_1 \,
|(\delta\pol X)_\rho|^2, |\pol X^m_\rho|^{-1}\right)
+ \left(\pol X^m \,.\,\pol\ek_1 \,
|[\kappa_{\mathcal{S}}]_\rho|^2, |\pol X^m_\rho|^{-1} \right) = 0\,.
\end{equation}
It follows from (\ref{eq:uniquesd0}) and the assumption $(\mathfrak A)$
that $\kappa_{\mathcal{S}} = \kappa^c \in \bR$ and 
$\delta\pol X \equiv \pol X^c\in\bR^2$.
Hence it follows from %choosing $\chi = \chi_j \in V^h = W^h$ in 
(\ref{eq:proofsda}) that
$\pol X^c\,.\,\pol z = 0$ for all $\pol z \in \mathcal{Z}^{(h)}$,
and so assumption $(\mathfrak B)^{(h)}$ yields that $\pol X^c = \pol 0$.
Similarly, it follows from (\ref{eq:proofsdb}) and the fact that 
$\mathcal{Z}^{(h)}$ must contain a nonzero vector that $\kappa^c=0$.
Hence we have shown that (\ref{eq:fdsds}) has a unique solution
$(\delta\pol X^{m+1},\kappa_{\mathcal{S}}^{m+1}) \in \Vhpartial \times
V^h$.
\end{proof}

For the scheme $(\BGNsdstab_{m,\star})^{(h)}$ it does not appear possible to
prove existence of a solution. However, 
despite the lack of a mathematical proof,
in practice we are always able to find
a solution with the help of a Newton method.

\begin{thm} \label{thm:stabsd}
Let $\pol X^m \in\Vhpartialzero$ satisfy the assumption $(\mathfrak A)$, and
let $(\pol X^{m+1},\kappa_{\mathcal{S}}^{m+1})$ be a solution to 
$(\BGNsdstab_{m,\star})^{(h)}$. %{\rm (\ref{eq:fdsdsnonlinear})}. 
Then it holds that
\begin{equation} \label{eq:stabsd}
E(\pol X^{m+1}) + 
2\,\pi\,\ttau_m
\left(\pol X^m\,.\,\pol\ek_1 
\,|[\kappa_{\mathcal{S}}^{m+1}]_\rho|^2, |\pol X^m_\rho|^{-1}\right)
 \leq E(\pol X^m)\,.
\end{equation}
\end{thm}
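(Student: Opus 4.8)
The plan is to mimic on the fully discrete level the stability computation (\ref{eq:sdstab}) that was carried out for the semidiscrete scheme $(\BGNsdstab_h)$, using the standard Barrett--Garcke--N\"urnberg strategy of testing the two equations against discrete analogues of the quantities used in the continuous energy identity. Concretely, I would choose $\chi = \kappa_{\mathcal{S}}^{m+1}$ in (\ref{eq:fdsdsnonlineara}) and $\pol\eta = \delta\pol X^{m+1} = \pol X^{m+1} - \pol X^m \in \Vhpartial$ in (\ref{eq:fdsdsnonlinearb}), multiply the first by $\ttau_m$, and add. The left-hand side of the first equation then becomes $\ttau_m$ times a manifestly nonnegative term $\bigl(\pol X^m\,.\,\pol\ek_1\,|[\kappa_{\mathcal{S}}^{m+1}]_\rho|^2, |\pol X^m_\rho|^{-1}\bigr)$, which will give the dissipation term in (\ref{eq:stabsd}). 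The cross terms involving $\kappa_{\mathcal{S}}^{m+1}\,\pol\nu^m$ cancel between the two equations, exactly as in the continuous case.

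What remains is to show that the leftover terms bound $E(\pol X^{m+1}) - E(\pol X^m)$ from above by $0$. After cancellation these leftover terms are
\[
\bigl((\pol X^{m+1} - \pol X^m)\,.\,\pol\ek_1, |\pol X^{m+1}_\rho|\bigr)
+ \bigl((\pol X^m\,.\,\pol\ek_1)\,\pol X^{m+1}_\rho, (\pol X^{m+1}_\rho - \pol X^m_\rho)\,|\pol X^m_\rho|^{-1}\bigr)
\]
coming from the area part, together with the boundary sum from the right-hand side of (\ref{eq:fdsdsnonlinearb}). The key step here is a collection of elementary convexity/Cauchy--Schwarz inequalities: first, $|\pol X^{m+1}_\rho|^2 \geq 2\,\pol X^{m+1}_\rho\,.\,\pol X^m_\rho - |\pol X^m_\rho|^2$, which upon division by $|\pol X^m_\rho|$ gives $\pol X^{m+1}_\rho\,.\,(\pol X^{m+1}_\rho - \pol X^m_\rho)\,|\pol X^m_\rho|^{-1} \geq |\pol X^{m+1}_\rho| - |\pol X^m_\rho|$; second, for the boundary terms with $p\in\partial_2 I$, the decomposition $[\sliprho^{(p)}]_+$ on $\pol X^{m+1}$ and $[\sliprho^{(p)}]_-$ on $\pol X^m$ is precisely engineered so that, again via $2\,a\,b \leq a^2 + b^2$ type estimates (with the sign of $\sliprho^{(p)}$ controlling the direction), one recovers an upper bound by the increment of the $\pi\sum_{p\in\partial_2 I}\sliprho^{(p)}(\pol X^h(p)\,.\,\pol\ek_1)^2$ contribution to $E$. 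The $\partial_1 I$ boundary terms are linear in $\pol X^{m+1}(p)$ and produce the corresponding increment of the $\sliprho^{(p)}$-term in $E$ directly. Assembling all of these, multiplied by $2\,\pi$, yields $E(\pol X^{m+1}) - E(\pol X^m) + 2\,\pi\,\ttau_m(\ldots) \leq 0$.

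The main obstacle — and the reason the scheme needs the nonlinear modifications $|\pol X^{m+1}_\rho|$ in the second term of (\ref{eq:fdsdsnonlinearb}) and the $[\cdot]_\pm$ splitting in the boundary terms, rather than the simpler semi-implicit treatment of $(\BGNsdstab_m)^{(h)}$ — is getting these convexity estimates to close with the correct sign. With an explicit $|\pol X^m_\rho|$ there one would only obtain a first-order Taylor expansion of the area, and the quadratic remainder would have the wrong sign to be absorbed; the implicit $|\pol X^{m+1}_\rho|$ turns the relevant expression into something that is bounded below by the exact area increment via the convexity of $r\mapsto |r|$. Similarly, without the $[\sliprho^{(p)}]_\pm$ trick, the boundary contact-energy terms (which, as noted after (\ref{eq:E}), make $E$ unbounded below and in general non-convex in the relevant variable when $\sliprho^{(p)}<0$) cannot be controlled; splitting according to sign and placing the "bad-sign" part at the old time level restores a one-sided bound. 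I would therefore organise the proof as: (i) test and add to isolate the dissipation term; (ii) rewrite the area-type remainder and apply the $|\pol X^{m+1}_\rho|^2 \geq 2\,\pol X^{m+1}_\rho\,.\,\pol X^m_\rho - |\pol X^m_\rho|^2$ inequality together with $\pol X^{m+1}\,.\,\pol\ek_1 = \pol X^m\,.\,\pol\ek_1 + (\delta\pol X^{m+1})\,.\,\pol\ek_1$ to telescope into $A(\pol X^{m+1}) - A(\pol X^m)$; (iii) treat the $\partial_1 I$ and $\partial_2 I$ boundary sums separately, using the sign splitting for $\partial_2 I$, to telescope into the remaining parts of $E$; (iv) collect, multiply by $2\,\pi$, and conclude.
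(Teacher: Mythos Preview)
Your proposal is correct and follows essentially the same route as the paper: test (\ref{eq:fdsdsnonlineara}) with $\chi=\ttau_m\,\kappa_{\mathcal{S}}^{m+1}$ and (\ref{eq:fdsdsnonlinearb}) with $\pol\eta=\pol X^{m+1}-\pol X^m$, cancel the cross terms, and use the elementary inequalities $\pol a\,.\,(\pol a-\pol b)\geq |\pol b|\,(|\pol a|-|\pol b|)$ and $2\,\beta\,(\beta-\alpha)\geq \beta^2-\alpha^2$ (with the sign of $\sliprho^{(p)}$ determining the direction for the $\partial_2 I$ terms), together with $\pol X^{m+1}(p)\,.\,\pol\ek_1=\pol X^m(p)\,.\,\pol\ek_1$ for $p\in\partial_1 I$, to telescope into $E(\pol X^{m+1})-E(\pol X^m)$. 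One small caveat: your stated derivation of the key inequality via ``$|\pol X^{m+1}_\rho|^2 \geq 2\,\pol X^{m+1}_\rho\,.\,\pol X^m_\rho - |\pol X^m_\rho|^2$ upon division by $|\pol X^m_\rho|$'' does not by itself yield the claimed bound (Cauchy--Schwarz goes the wrong way at that point); the inequality you need is precisely $\pol a\,.\,(\pol a-\pol b)\geq |\pol b|\,(|\pol a|-|\pol b|)$, which follows from $(|\pol a|-|\pol b|)^2\geq 0$ together with $|\pol a||\pol b|\geq \pol a\,.\,\pol b$.
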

\begin{proof}
Choosing $\chi = \ttau_m\,\kappa_{\mathcal{S}}^{m+1}$ in 
(\ref{eq:fdsdsnonlineara}) and
$\pol\eta = \pol X^{m+1} - \pol X^m\in \Vhpartial$ in 
(\ref{eq:fdsdsnonlinearb}) 
yields, on noting that $\pol X^{m}(p)\,.\,\pol\ek_1 = \pol
X^{m+1}(p)\,.\,\pol\ek_1$ for $p \in \partial_1 I$, that
\begin{align*}
 - \ttau_m \left(\pol X^m\,.\,\pol\ek_1 \,
|[\kappa_{\mathcal{S}}^{m+1}]_\rho|^2,
|\pol X^m_\rho|^{-1} \right) & 
= \left( \pol X^{m+1} - \pol X^m  ,\pol\ek_1\, |\pol X^{m+1}_\rho|\right)
+ \left( (\pol X^m\,.\,\pol\ek_1)\,
(\pol X^{m+1} - \pol X^m)_\rho,\pol X^{m+1}_\rho\, |\pol X^m_\rho|^{-1} \right)
\nonumber \\ & \qquad 
+ \sum_{p \in \partial_1 I} \sliprho^{(p)}\,
(\pol X^{m}(p)\,.\,\pol\ek_1)\,(\pol X^{m+1}(p) - \pol X^m(p))\,.\,\pol\ek_2
\nonumber \\ & \qquad
+ \sum_{p \in \partial_2 I} ([\sliprho^{(p)}]_+\,\pol X^{m+1}(p) + 
[\sliprho^{(p)}]_-\,\pol X^{m}(p)]\,.\,\pol\ek_1)\,
(\pol X^{m+1}(p) - \pol X^m(p))\,.\,\pol\ek_1
\nonumber \\ & 
\geq \left( \pol X^{m+1} - \pol X^m  ,\pol\ek_1\, |\pol X^{m+1}_\rho|\right)
+ \left( \pol X^m\,.\,\pol\ek_1,
|\pol X^{m+1}_\rho| - |\pol X^m_\rho| \right)
\nonumber \\ & \qquad 
+ \sum_{p \in \partial_1 I} \sliprho^{(p)}\,
(\pol X^{m}(p)\,.\,\pol\ek_1)\,\pol X^{m+1}(p) \,.\,\pol\ek_2
- \sum_{p \in \partial_1 I} \sliprho^{(p)}\,
(\pol X^{m}(p)\,.\,\pol\ek_1)\,\pol X^{m}(p) \,.\,\pol\ek_2
\nonumber \\ & \qquad 
+ \tfrac12 \sum_{p \in \partial_2 I} [\sliprho^{(p)}]_+\,
(\pol X^{m+1}(p)\,.\,\pol\ek_1)^2\,
- \tfrac12 \sum_{p \in \partial_2 I} [\sliprho^{(p)}]_+\,
(\pol X^{m}(p)\,.\,\pol\ek_1)^2\,
\nonumber \\ & \qquad 
+ \tfrac12 \sum_{p \in \partial_2 I} [\sliprho^{(p)}]_-\,
(\pol X^{m+1}(p)\,.\,\pol\ek_1)^2\,
- \tfrac12 \sum_{p \in \partial_2 I} [\sliprho^{(p)}]_-\,
(\pol X^{m}(p)\,.\,\pol\ek_1)^2\,
\nonumber \\ & 
= \left( \pol X^{m+1} \,.\,\pol\ek_1, |\pol X^{m+1}_\rho|\right)
- \left( \pol X^m\,.\,\pol\ek_1, |\pol X^m_\rho| \right)
\nonumber \\ & \qquad
+ \sum_{p \in \partial_1 I} \sliprho^{(p)}\,
(\pol X^{m+1}(p)\,.\,\pol\ek_1)\,\pol X^{m+1}(p) \,.\,\pol\ek_2
- \sum_{p \in \partial_1 I} \sliprho^{(p)}\,
(\pol X^{m}(p)\,.\,\pol\ek_1)\,\pol X^{m}(p) \,.\,\pol\ek_2
\nonumber \\ & \qquad
+ \tfrac12 \sum_{p \in \partial_2 I} \sliprho^{(p)}\,
(\pol X^{m+1}(p)\,.\,\pol\ek_1)^2\,
- \tfrac12 \sum_{p \in \partial_2 I} \sliprho^{(p)}\,
(\pol X^{m}(p)\,.\,\pol\ek_1)^2\,
\nonumber \\ & 
= \frac1{2\,\pi}\,E(\pol X^{m+1}) 
- \frac1{2\,\pi}\,E(\pol X^m)\,, %\label{eq:stabsd1}
\end{align*}
where we have used %that Cauchy--Schwarz implies that
the two inequalities
$\pol a\,.\,(\pol a - \pol b) \geq |\pol b|\,(|\pol a| - |\pol b|)$
for $\pol a$, $\pol b \in \bR^2$,
and $2\,\beta\,(\beta - \alpha) \geq \beta^2 - \alpha^2$ for
$\alpha,\beta\in\bR$. This proves the desired result (\ref{eq:stabsd}).
\end{proof}

\subsection{Intermediate evolution law}

It is straightforward to adapt the schemes
$(\BGNsd_m)^h$, $(\BGNsdstab_m)^{(h)}$ and $(\BGNsdstab_{m,\star})^{(h)}$ 
to the flow (\ref{eq:salknew}). 
For example, $(\BGNsdstab_{m,\star})^{(h)}$ can be adapted to yield the
following fully discrete approximation of $(\BGNintstab_h)^{(h)}$, 
(\ref{eq:sdint}).

$(\BGNintstab_{m,\star})^{(h)}$:
Let $\pol X^0 \in \Vhpartialzero$. For $m=0,\ldots,M-1$, 
find $(\delta\pol X^{m+1}, Y^{m+1}, \kappa_{\mathcal{S}}^{m+1}) 
\in \Vhpartial \times [V^h]^2$, 
where $\pol X^{m+1} = \pol X^m + \delta \pol X^{m+1}$, such that
\begin{subequations} % \label{eq:SALKabc}
\begin{align}
&
\left(\pol X^m\,.\,\pol\ek_1\,\frac{\pol X^{m+1} - \pol X^m}{\ttau_m},
\chi\,\pol\nu^m\,|\pol X^m_\rho|\right)^{(h)} \!\!\!
= \left(\pol X^m\,.\,\pol\ek_1 \, Y^{m+1}_\rho, 
\chi_\rho\,|\pol X^m_\rho|^{-1}\right)
\qquad \forall\ \chi \in V^h\,, \label{eq:SALKa}\\
&
\tfrac1\xi \left(\pol X^m\,.\,\pol\ek_1 \, Y^{m+1}_\rho, 
\zeta_\rho\,|\pol X^m_\rho|^{-1}\right)
+ \left(\pol X^m\,.\,\pol\ek_1 \left[ \alpha^{-1}\,Y^{m+1} -
\kappa_{\mathcal{S}}^{m+1} \right], 
\zeta\,|\pol X^m_\rho|\right)^{(h)}  = 0
\qquad \forall\ \zeta \in V^h\,, \label{eq:SALKb}\\
&
\left(\pol X^m\,.\,\pol\ek_1\,
\kappa_{\mathcal{S}}^{m+1}\,\pol\nu^m, \pol\eta\,|\pol X^m_\rho|\right)^{(h)}
+ \left( \pol\eta \,.\,\pol\ek_1, |\pol X^{m+1}_\rho|\right)
+ \left( (\pol X^m\,.\,\pol\ek_1)\,
\pol X^{m+1}_\rho,\pol\eta_\rho\, |\pol X^m_\rho|^{-1} \right) 
\nonumber \\ & \qquad
= - \sum_{p \in \partial_1 I} \sliprho^{(p)}\,
(\pol X^m(p)\,.\,\pol\ek_1)\,\pol\eta(p)\,.\,\pol\ek_2
 - \sum_{p \in \partial_2 I} ( ( [\sliprho^{(p)}]_+ \,
\,\pol X^{m+1}(p)
+ [\sliprho^{(p)}]_- \,
\,\pol X^{m}(p))
\,.\,\pol\ek_1)\,\pol\eta(p)\,.\,\pol\ek_1
\qquad \forall\ \pol\eta \in \Vhpartial\,.
\label{eq:SALKc}
\end{align}
\end{subequations}

\begin{thm} \label{thm:stabint}
Let $\pol X^m \in\Vhpartialzero$ satisfy the assumption $(\mathfrak A)$, and
let $(\pol X^{m+1},Y^{m+1},\kappa_{\mathcal{S}}^{m+1})$ be a solution to 
$(\BGNintstab_{m,\star})^{(h)}$. 
Then it holds that
\begin{equation} 
 E(\pol X^{m+1}) + 
\frac{2\,\pi\,\ttau_m}{\alpha}
\left(\pol X^m\,.\,\pol\ek_1 
\,|[Y^{m+1}]_\rho|^2, |\pol X^m_\rho|^{-1}\right)
+ 2\,\pi\,\ttau_m\,\xi \left( \pol X^m\,.\,\pol\ek_1\,
|\kappa^{m+1}_{\mathcal{S}} - \tfrac1\alpha\,Y^{m+1}|^2 , |\pol X^m_\rho| 
\right)^{(h)} \leq E(\pol X^m)\,.
\label{eq:stabint}
\end{equation}
\end{thm}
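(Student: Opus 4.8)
The plan is to follow the proof of Theorem~\ref{thm:stabsd} for the geometric and contact-energy part, and to add the one new ingredient forced by the extra unknown $Y^{m+1}$: an algebraic identity, read off from the middle equation (\ref{eq:SALKb}), that converts the cross term $(\pol X^m\,.\,\pol\ek_1\,Y^{m+1}_\rho,(\kappa_{\mathcal{S}}^{m+1})_\rho\,|\pol X^m_\rho|^{-1})$ into the sum of the two non-negative dissipation contributions appearing in (\ref{eq:stabint}). This is the fully discrete counterpart of the semidiscrete manipulation that produced (\ref{eq:intstab}).

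First I would test (\ref{eq:SALKa}) with $\chi=\ttau_m\,\kappa_{\mathcal{S}}^{m+1}\in V^h$ and (\ref{eq:SALKc}) with $\pol\eta=\pol X^{m+1}-\pol X^m\in\Vhpartial$. The common curvature--velocity term $(\pol X^m\,.\,\pol\ek_1\,\kappa_{\mathcal{S}}^{m+1}\,\pol\nu^m,(\pol X^{m+1}-\pol X^m)\,|\pol X^m_\rho|)^{(h)}$ then cancels, leaving
\[
\ttau_m\,(\pol X^m\,.\,\pol\ek_1\,Y^{m+1}_\rho,(\kappa_{\mathcal{S}}^{m+1})_\rho\,|\pol X^m_\rho|^{-1})
= -\,(\,(\pol X^{m+1}-\pol X^m)\,.\,\pol\ek_1,|\pol X^{m+1}_\rho|\,)
-\,(\,(\pol X^m\,.\,\pol\ek_1)\,\pol X^{m+1}_\rho,(\pol X^{m+1}-\pol X^m)_\rho\,|\pol X^m_\rho|^{-1}\,)
-\,(\text{contact terms})\,.
\]
The right-hand side is exactly the quantity bounded in the proof of Theorem~\ref{thm:stabsd}; invoking the elementary inequalities $\pol a\,.\,(\pol a-\pol b)\geq|\pol b|\,(|\pol a|-|\pol b|)$ and $2\,\beta\,(\beta-\alpha)\geq\beta^2-\alpha^2$, using $\pol X^m(p)\,.\,\pol\ek_1=\pol X^{m+1}(p)\,.\,\pol\ek_1$ for $p\in\partial_1 I$ and the $[\sliprho^{(p)}]_\pm$-splitting for $p\in\partial_2 I$, and recalling (\ref{eq:Eh}), one obtains $\ttau_m\,(\pol X^m\,.\,\pol\ek_1\,Y^{m+1}_\rho,(\kappa_{\mathcal{S}}^{m+1})_\rho\,|\pol X^m_\rho|^{-1})\leq\tfrac1{2\,\pi}\,(E(\pol X^m)-E(\pol X^{m+1}))$.

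Next I would test (\ref{eq:SALKb}) with $\zeta=\ttau_m\,(\alpha\,\kappa_{\mathcal{S}}^{m+1}-Y^{m+1})\in V^h$. Using $\alpha^{-1}Y^{m+1}-\kappa_{\mathcal{S}}^{m+1}=-\tfrac1\alpha\,(\alpha\,\kappa_{\mathcal{S}}^{m+1}-Y^{m+1})$, expanding the bilinear forms and simplifying (multiplying through by $\tfrac\xi\alpha$ and using $\tfrac1\alpha\,|\alpha\,\kappa_{\mathcal{S}}^{m+1}-Y^{m+1}|^2=\alpha\,|\kappa_{\mathcal{S}}^{m+1}-\tfrac1\alpha\,Y^{m+1}|^2$) gives
\[
\ttau_m\,(\pol X^m\,.\,\pol\ek_1\,Y^{m+1}_\rho,(\kappa_{\mathcal{S}}^{m+1})_\rho\,|\pol X^m_\rho|^{-1})
= \tfrac{\ttau_m}{\alpha}\,(\pol X^m\,.\,\pol\ek_1\,|Y^{m+1}_\rho|^2,|\pol X^m_\rho|^{-1})
+ \ttau_m\,\xi\,(\pol X^m\,.\,\pol\ek_1\,|\kappa_{\mathcal{S}}^{m+1}-\tfrac1\alpha\,Y^{m+1}|^2,|\pol X^m_\rho|)^{(h)}\,,
\]
the right-hand side being non-negative by assumption $(\mathfrak A)$. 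Combining this identity with the estimate of the previous step and multiplying by $2\,\pi$ yields (\ref{eq:stabint}).

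I do not expect a genuine obstacle. The geometric and contact-angle bookkeeping is identical to that in Theorem~\ref{thm:stabsd} — in particular, the implicit evaluation $|\pol X^{m+1}_\rho|$ in (\ref{eq:SALKc}) and the $[\,\cdot\,]_\pm$-splitting at $\partial_2 I$ are precisely what make the two elementary inequalities applicable — and the middle-equation identity is a short computation already done at the semidiscrete level. The only point requiring care is tracking the factors $\ttau_m$, $\alpha$ and $\xi$ through the three test-function choices so that the dissipation terms emerge with exactly the coefficients $\tfrac{2\,\pi\,\ttau_m}{\alpha}$ and $2\,\pi\,\ttau_m\,\xi$ stated in (\ref{eq:stabint}).
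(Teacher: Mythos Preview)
Your proposal is correct and is essentially the paper's own argument: the paper simply states that choosing $\chi = \ttau_m\,\tfrac{\alpha}\xi\,\kappa_{\mathcal{S}}^{m+1}$ in (\ref{eq:SALKa}), $\zeta=\ttau_m\,\alpha\,\kappa_{\mathcal{S}}^{m+1}-Y^{m+1}$ in (\ref{eq:SALKb}) and $\pol\eta = \pol X^{m+1}-\pol X^m$ in (\ref{eq:SALKc}) and proceeding as in Theorem~\ref{thm:stabsd} yields (\ref{eq:stabint}). Your test functions differ from the paper's only by overall constant factors (you post-multiply by $\tfrac\xi\alpha$ rather than building that factor into $\chi$), which is immaterial; the geometric/contact-energy estimate and the algebraic identity from (\ref{eq:SALKb}) are exactly as you describe.
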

\begin{proof}
The proof is a simple adaptation of the proof of Theorem~\ref{thm:stabsd}.
In particular, choosing
$\chi = \ttau_m\,\tfrac{\alpha}\xi\,\kappa_{\mathcal{S}}^{m+1}$ in
(\ref{eq:SALKa}),
$\zeta=\ttau_m\,\alpha\,\kappa_{\mathcal{S}}^{m+1}-Y^{m+1}$ in (\ref{eq:SALKb}) 
and $\pol\eta = \pol X^{m+1} - \pol X^m \in \Vhpartial$ 
in (\ref{eq:SALKc}) yields (\ref{eq:stabint}).
\end{proof}

\subsection{Willmore flow}
Our fully discrete analogue of the scheme
$(\BGNwf_h)^h$, (\ref{eq:bgnsd}), is given as follows.

$(\BGNwf_m)^h$:
Let $\pol X^0 \in \Vhpartial$ and $\kappa^0 \in V^h$. 
For $m=0,\ldots,M-1$, 
find $(\pol X^{m+1}, \kappa^{m+1}) \in \Vhpartial \times V^h$ such that
\begin{subequations} \label{eq:bgnfd}
\begin{align}
&
\left( \pol X^m\,.\,\pol\ek_1\,
\frac{\pol X^{m+1} - \pol X^m}{\ttau_m}, \chi\,\pol\nu^m \, |\pol X^m_\rho|
\right)^h
- \left( \pol X^m\,.\,\pol\ek_1 \left[
\kappa^{m+1} - \doctorkappa^{m}(\kappa^{m+1})
\right]_\rho, \chi_\rho\, |\pol X^m_\rho|^{-1} \right)
\nonumber \\ & \quad
= - 2 \left( \left[\kappa^m - 
\frac{\pol\omega^m\,.\,\pol\ek_1}{\pol X^m\,.\,\pol\ek_1} - \spont\right]
\kappa^m\,\pol\omega^m\,.\,\pol\ek_1, \chi \, |\pol X^m_\rho| \right)^h
- \tfrac12 \left( \pol X^m\,.\,\pol\ek_1
\left(
\left[\kappa^m - \frac{\pol\omega^m\,.\,\pol\ek_1}{\pol X^m\,.\,\pol\ek_1}
\right]^2 - \spont^2\right)
\left[\kappa^m - \frac{\pol\omega^m\,.\,\pol\ek_1}{\pol X^m\,.\,\pol\ek_1}
\right]
, \chi \, |\pol X^m_\rho| \right)^h
\nonumber \\ & \hspace{13cm}
\qquad \forall\ \chi \in V^h\,, \label{eq:bgnfda}\\
&
\left( \kappa^{m+1}\,\pol\nu^m, \pol\eta \, |\pol X^m_\rho| \right)^h
+ \left( \pol X^{m+1}_\rho, \pol\eta_\rho \, |\pol X^m_\rho|^{-1}\right) = 0 
\qquad \forall\ \pol\eta \in \Vhpartial\,.
\label{eq:bgnfdb}
\end{align}
\end{subequations}
We note that, similarly to $(\BGNsd_m)^{h}$, it does not appear
possible to prove existence and
uniqueness of a solution to $(\BGNwf_m)^{h}$. 
However, 
despite the lack of a mathematical proof,
in practice the linear systems (\ref{eq:bgnfd}) are always invertible.

\subsubsection{Helfrich flow}
We re-write (\ref{eq:bgnfda}) as
\[
 \left( \pol X^m\,.\,\pol\ek_1\,
\frac{\pol X^{m+1} - \pol X^m}{\ttau_m}, \chi\,\pol\nu^m\,|\pol X^m_\rho|
\right)^h
- \left( \pol X^m\,.\,\pol\ek_1 \left[
\kappa^{m+1} - \doctorkappa^{m}(\kappa^{m+1}) \right]_\rho, 
\chi_\rho \, |\pol X^m_\rho|^{-1} \right)
 = \left( f^m, \chi \, |\pol X^m_\rho| \right)^h .
\]
Then our fully discrete analogue of the scheme
$(\BGNwf_h^{A,V})^h$, (\ref{eq:wfsda}), (\ref{eq:lambdamuh}), 
is given as follows.

$(\BGNwf_m^{A,V})^h$:
Let $\pol X^0 \in \Vhpartial$ and $\kappa^0 \in V^h$. 
For $m=0,\ldots,M-1$, 
find $(\pol X^{m+1}, \kappa^{m+1}, \lambda_A^{m+1},$ $ \lambda_V^{m+1}) 
\in \Vhpartial \times V^h \times \bR^2$ such that (\ref{eq:bgnfdb}) and
\begin{subequations} \label{eq:bgnwffd}
\begin{align}
&\left( \pol X^m\,.\,\pol\ek_1\,
\frac{\pol X^{m+1} - \pol X^m}{\ttau_m}, \chi\,\pol\nu^m \, |\pol X^m_\rho|
\right)^h
- \left( \pol X^m\,.\,\pol\ek_1 \left[
\kappa^{m+1} - \doctorkappa^{m}(\kappa^{m+1})\right]_\rho, 
\chi_\rho \, |\pol X^m_\rho|^{-1}\right)
\nonumber \\ & \qquad
 = \left( f^m, \chi \, |\pol X^m_\rho| \right)
+ \lambda_A^{m+1}
\left( \pol X^m\,.\,\pol\ek_1\left[
\kappa^{m} - \doctorkappa^{m}(\kappa^{m}) \right],
\chi \, |\pol X^m_\rho| \right)^h
+ \lambda_V^{m+1} \left(\pol X^m\,.\,\pol\ek_1,\chi \, |\pol X^m_\rho| 
\right)^h
\qquad \forall\ \chi \in V^h\,, \label{eq:bgnwffda}  \\
&A(\pol X^{m+1}) = A(\pol X^0)\,,\quad
V(\pol X^{m+1}) = V(\pol X^0)\,, % \label{eq:lambdamuab} 
\end{align}
\end{subequations}
hold, where we have recalled (\ref{eq:lambdamuh}).
The system (\ref{eq:bgnwffd}) can be solved with a suitable nonlinear
solution method, see below.
In the simpler case of surface area conserving Willmore flow, we need to find
\linebreak
$(\pol X^{m+1}, \kappa^{m+1}, \lambda_A^{m+1}, \lambda_V^{m+1}) 
\in \Vhpartial \times V^h \times \bR \times \{0\}$
such that (\ref{eq:bgnwffd}) hold. 
Similarly, for volume conserving Willmore flow, we need to 
find $(\pol X^{m+1}, \kappa^{m+1}, \lambda_A^{m+1}, \lambda_V^{m+1}) 
\in \Vhpartial \times V^h \times \{0\} \times \bR$
such that (\ref{eq:bgnwffd}) hold.

Adapting the strategy in \cite{ElliottS10}, 
we now describe a Newton method for
solving the nonlinear system (\ref{eq:bgnwffd}).
The linear system (\ref{eq:bgnwffda})
and (\ref{eq:bgnfdb}), with $(\lambda_A^{m+1}, \lambda_V^{m+1})$ in
(\ref{eq:bgnwffda}) replaced by $(\lambda_A, \lambda_V)$, can be written as:
Find $(\pol X^{m+1}(\lambda_A,\lambda_V), \linebreak
\kappa^{m+1}(\lambda_A,\lambda_V)) \in \Vhpartial\times V^h$ such that
\begin{equation} \label{eq:lmsys}
\mathbb{T}^m\,\begin{pmatrix}
\kappa^{m+1}(\lambda_A,\lambda_V)\\[1mm]  
\pol X^{m+1}(\lambda_A,\lambda_V)
\end{pmatrix}
= \begin{pmatrix} \underline{\mathfrak g}^m \\[1mm] \pol 0
\end{pmatrix}
+ \lambda_A\, \begin{pmatrix} \underline{\mathfrak K}^m \\[1mm] \pol 0
\end{pmatrix}
+ \lambda_V\, \begin{pmatrix} \underline{\mathfrak N}^m \\[1mm] \pol 0
\end{pmatrix}.
\end{equation}
Assuming the linear operator $\mathbb{T}^m$ is invertible, we obtain that
\begin{equation}
\begin{pmatrix}
\kappa^{m+1}(\lambda_A,\lambda_V) \\[1mm]
\pol X^{m+1}(\lambda_A,\lambda_V)
\end{pmatrix}
= (\mathbb{T}^m)^{-1}\left[\begin{pmatrix} \underline{\mathfrak g}^m \\[1mm]
 \pol 0
\end{pmatrix}
+ \lambda_A \begin{pmatrix} \underline{\mathfrak K}^m \\[1mm] \pol 0
\end{pmatrix}
+ \lambda_V\, \begin{pmatrix} \underline{\mathfrak N}^m \\[1mm] \pol 0
\end{pmatrix}\right] 
=: (\mathbb{T}^m)^{-1} \begin{pmatrix} \underline{\mathfrak g}^m \\[1mm] \pol 0
\end{pmatrix}
+ \lambda_A \begin{pmatrix} {\underline s}^m_1 \\[1mm] \pol{\underline s}^m_2
\end{pmatrix}
+ \lambda_V\, \begin{pmatrix} {\underline q}^m_1 \\[1mm] \pol{\underline q}^m_2
\end{pmatrix} .
\label{eq:lmsysinverse}
\end{equation}
It immediately follows from (\ref{eq:lmsysinverse}) that
\begin{equation*}
\partial_{\lambda_A} \pol X^{m+1}(\lambda_A,\lambda_V) 
= \pol{\underline s}^m_2\,,\quad
\partial_{\lambda_V} \pol X^{m+1}(\lambda_A,\lambda_V)
= \pol{\underline q}^m_2\,.
\end{equation*}
Hence
\[
\partial_{\lambda_A} A(\pol X^{m+1}(\lambda_A,\lambda_V))
  = 
\left[\deldel{\pol X^{m+1}}\, A(\pol X^{m+1}(\lambda_A,\lambda_V))\right]
(\pol s^m_2)\,, \qquad
\partial_{\lambda_A} V(\pol X^{m+1}(\lambda_A,\lambda_V))
  = 
\left[\deldel{\pol X^{m+1}}\, V(\pol X^{m+1}(\lambda_A,\lambda_V))\right]
(\pol s^m_2)\,,
\]
and similarly for $\partial_{\lambda_V} A(\pol X^{m+1}(\lambda_A,\lambda_V))$
and $\partial_{\lambda_V} V(\pol X^{m+1}(\lambda_A,\lambda_V))$.
Here $\pol s^m_2 \in \Vhpartial$ 
is the finite element function corresponding to the
coefficients in $\pol{\underline s}^m_2$ for the standard basis of $\Vh$.
Moreover, we have defined the first variation of $A(\pol Z^h)$,
for any $\pol Z^h \in \Vhpartial$, as
\[
\left[\deldel{\pol Z^h}\, A(\pol Z^h)\right](\pol\eta)
 = \lim_{\epsilon\to0} \frac1\epsilon\left(
A(\pol Z^h + \epsilon\,\pol\eta) - A(\pol Z^h)\right)
= 2\,\pi
\left(\pol\eta\,.\,\pol\ek_1,|\pol Z^h_\rho|\right) + 2\,\pi \left(
(\pol Z^h\,.\,\pol\ek_1)\,
\pol\eta_\rho,\pol Z^h_\rho\, |\pol Z^h_\rho|^{-1} \right)
\qquad \forall\ \pol\eta \in \Vhpartial \, ,
\]
and similarly 
\[
\left[\deldel{\pol Z^h}\, V(\pol Z^h)\right](\pol\eta)
 = \lim_{\epsilon\to0} \frac1\epsilon\left(
V(\pol Z^h + \epsilon\,\pol\eta) - V(\pol Z^h)\right) % \nonumber \\ & 
= 2\,\pi
\left( \pol Z^h\,.\,\pol\ek_1, \pol\eta\,.\,[\pol Z^h_\rho]^\perp\right) 
\qquad \forall\ \pol\eta \in \Vhpartial \,.
\]

For a given iterate $(\lambda_A^{k},\lambda_V^{k})$, with corresponding
$\pol X^{m+1,k} = \pol X^{m+1}(\lambda_A^{k},\lambda_V^{k})$ and
$\kappa^{m+1,k} = \kappa^{m+1}(\lambda_A^{k},\lambda_V^{k})$, we now
define the following quantities.
\[
[\pol {\underline k}^{m+1,k}]_i=
\left(
\left[\deldel{\pol X^{m+1,k}}\, A(\pol X^{m+1,k})
\right](\chi_i\,\pol\ek_\ell) \right)_{\ell=1}^2 , \qquad
[\pol {\underline n}^{m+1,k}]_i =
\left(
\left[\deldel{\pol X^{m+1,k}}\, V(\pol X^{m+1,k})
\right](\chi_i\,\pol\ek_\ell) \right)_{\ell=1}^2 . %\label{eq:nm}
\]
Then the Newton update is given by
\begin{equation} \label{eq:Newton}
\begin{pmatrix} \lambda_A^{k+1}\\[1mm] \lambda_V^{k+1} \end{pmatrix}
= 
\begin{pmatrix} \lambda_A^{k}\\[1mm] \lambda_V^{k} \end{pmatrix}
- \begin{pmatrix}
\pol {\underline k}^{m+1,k} \,.\,\pol {\underline s}^m_2
& \pol {\underline k}^{m+1,k} \,.\,\pol {\underline q}^m_2 \\[1mm]
\pol {\underline n}^{m+1,k} \,.\,\pol {\underline s}^m_2
& \pol {\underline n}^{m+1,k} \,.\,\pol {\underline q}^m_2
\end{pmatrix}^{-1}\,
\begin{pmatrix}
A(\pol X^{m+1,k}) - A(\pol X^0) \\[1mm]
V(\pol X^{m+1,k}) - V(\pol X^0)
\end{pmatrix}.
\end{equation}
In practice, the linear systems (\ref{eq:lmsys}) are always invertible, and the
Newton iteration (\ref{eq:Newton}) converges within a couple of iterations.

\setcounter{equation}{0}
\section{Numerical results} \label{sec:nr}

As the fully discrete energy, we consider $E(\pol X^m)$, recall 
(\ref{eq:Eh}). 
Unless otherwise stated, we choose $\sliprho^{(0)} = \sliprho^{(1)} = 0$.
We always employ uniform time steps, $\ttau_m = \ttau$,
$m=0,\ldots,M-1$.

We also consider the ratio
\begin{equation} \label{eq:ratio}
\ratio^m = \dfrac{\max_{j=1\to J} |\pol{X}^m(q_j) - \pol{X}^m(q_{j-1})|}
{\min_{j=1\to J} |\pol{X}^m(q_j) - \pol{X}^m(q_{j-1})|}
\end{equation}
between the longest and shortest element of $\Gamma^m$, and are often
interested in the evolution of this ratio over time.

In practice, we stop the computation when $\pol X^m < 0$ for some 
$\rho \in \overline I$, as the computed results would then no longer be
physical. However, for sufficiently small discretization parameters
this happens only once the computation reaches a singularity for
the underlying flow. 

\subsection{Numerical results for surface diffusion} \label{sec:sdnr}

\subsubsection{Sphere}

Clearly, a sphere is a stationary solution for surface diffusion. Hence,
setting $\partial_0 I = \partial I = \{0,1\}$ and
choosing as initial data $\pol X^0$ the approximations of a semicircle 
displayed in Figure~\ref{fig:semicircle}, we now investigate the different
tangential motions exhibited by the six schemes 
$(\BGNsd_m)^h$, $(\BGNsd_m)$, 
$(\BGNsdstab_m)^h$, $(\BGNsdstab_m)$, $(\BGNsdstab_{m,\star})^h$ and
$(\BGNsdstab_{m,\star})$.
\begin{figure}
\center
\includegraphics[angle=-90,width=0.2\textwidth]{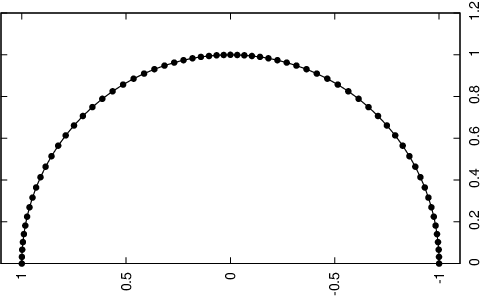} \qquad
\includegraphics[angle=-90,width=0.2\textwidth]{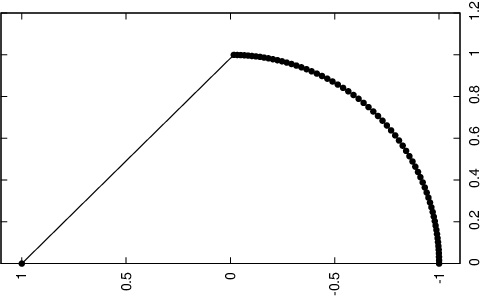}
\caption{
Initial data $\pol X^0$ approximating a semicircle with $J=64$. The initial
ratios (\ref{eq:ratio}) are $\ratio^0= 1.94$ and $\ratio^0= 89.81$, respectively.
}
\label{fig:semicircle}
\end{figure}%
We set $\ttau = 10^{-4}$ and integrate the evolution for the initial data on
the left of Figure~\ref{fig:semicircle} until time $T=1$,
see Figure~\ref{fig:TM}. Of the
six schemes, only $(\BGNsdstab_m)^h$ breaks down before reaching the
final time. When $(\BGNsdstab_m)^h$ breaks
down due to vertices moving to the left of the $x_2$--axis, the element ratio
$\ratio^m$ has reached a value of 6058. Hence it appears that 
$(\BGNsdstab_m)^h$ exhibits an implicit tangential motion towards
the $x_2$--axis, which can lead to coalescence of vertices or to vertices on
the left of the $x_2$--axis. For this reason we do not consider the
scheme $(\BGNsdstab_m)^h$ any further.
For the remaining five schemes 
$(\BGNsd_m)^h$, $(\BGNsd_m)$,
$(\BGNsdstab_m)$, $(\BGNsdstab_{m,\star})^h$,
$(\BGNsdstab_{m,\star})$ the element ratios $\ratio^m$ at time $T=1$ are
$1.00, 1.00, 3.04, 62.21, 3.05$, 
and the enclosed volume is preserved almost exactly by all 
the schemes. We show the final distributions of vertices, and plots of $\ratio^m$
over time in Figure~\ref{fig:TM}.
\begin{figure}
\center
\hspace*{-5mm}
\mbox{
\includegraphics[angle=-90,width=0.19\textwidth]{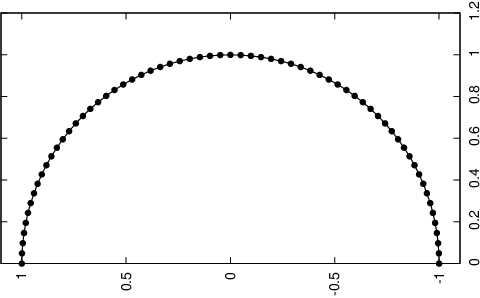}
\includegraphics[angle=-90,width=0.19\textwidth]{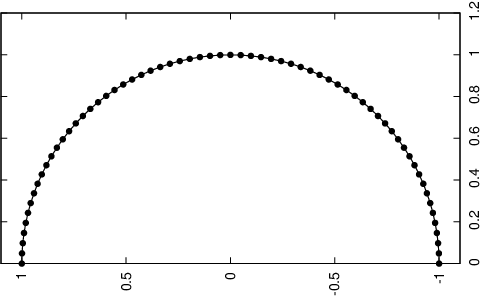}
\includegraphics[angle=-90,width=0.19\textwidth]{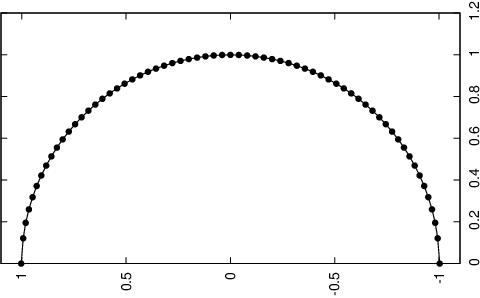}
\includegraphics[angle=-90,width=0.19\textwidth]{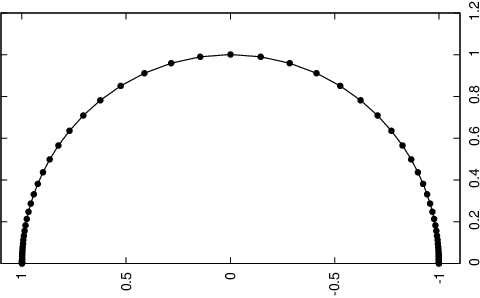}
\includegraphics[angle=-90,width=0.19\textwidth]{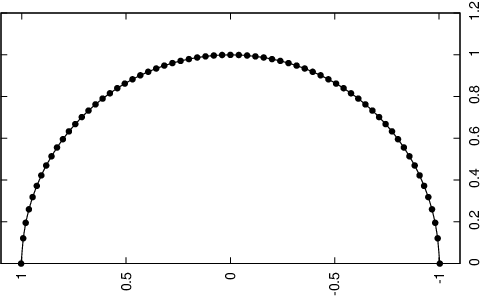}}
\hspace*{-5mm}
\mbox{
\includegraphics[angle=-90,width=0.19\textwidth]{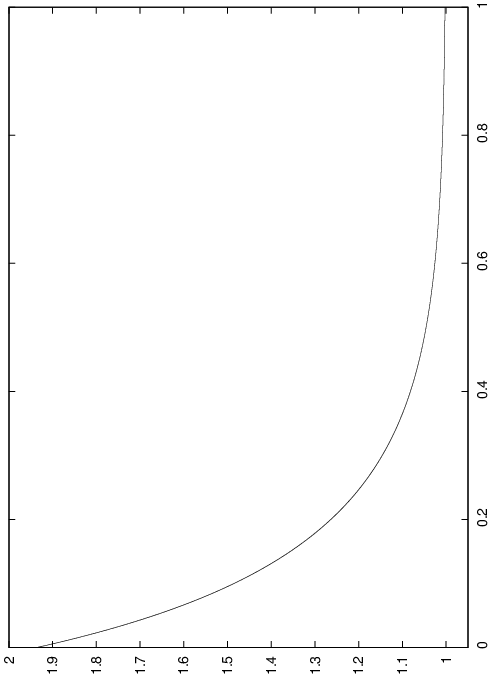}
\includegraphics[angle=-90,width=0.19\textwidth]{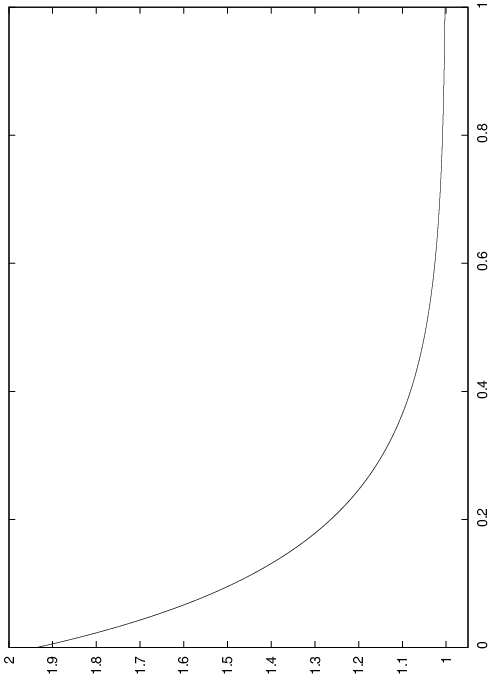}
\includegraphics[angle=-90,width=0.19\textwidth]{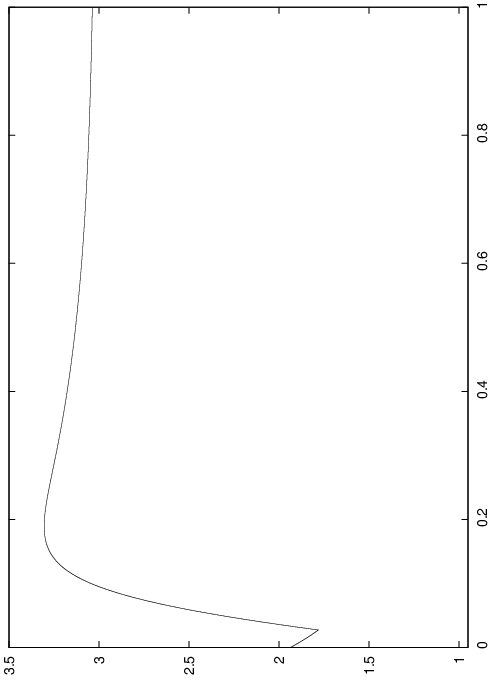}
\includegraphics[angle=-90,width=0.19\textwidth]{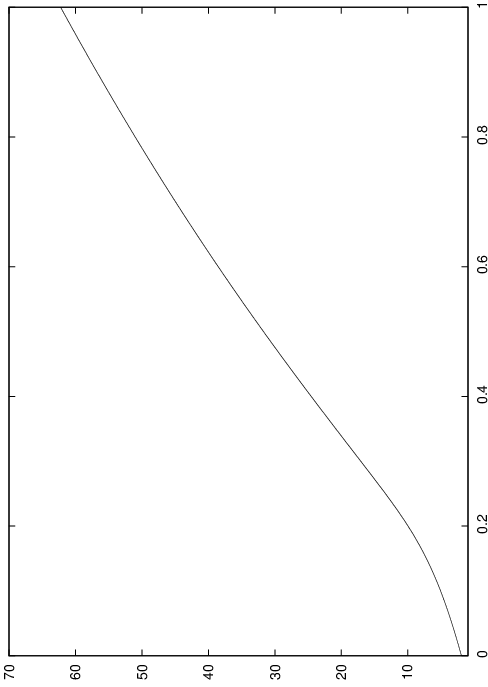}
\includegraphics[angle=-90,width=0.19\textwidth]{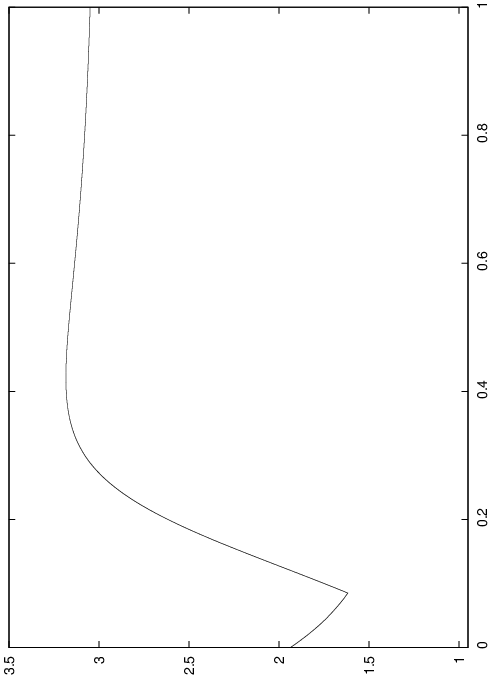}}
\caption{
Comparison of the different schemes for surface diffusion of the unit sphere.
Left to right: $(\BGNsd_m)^h$, $(\BGNsd_m)$,
$(\BGNsdstab_m)$, $(\BGNsdstab_{m,\star})^h$,
$(\BGNsdstab_{m,\star})$.
Plots are for $\pol X^m$ at time $t=1$ and for the ratio $\ratio^m$ over time.
The element ratios $\ratio^m$ at time $t=1$ are
$1.00$, $1.00$, $3.04$, $62.21$, $3.05$.
}
\label{fig:TM}
\end{figure}%
In addition, we show plots of the $\ratio^m$ for the scheme 
$(\BGNsdstab_{m,\star})$ for different time step sizes in
Figure~\ref{fig:TM3456}. In these experiments it appears that the element
ratio asymptotically approaches a value close to $3$. The same plots
for the scheme $(\BGNsd_m)^h$ show $\ratio^m$ monotonically decreasing to
the value $1$ by virtue of the equidistribution property (\ref{eq:equid}),
with the decrease faster for smaller time step sizes $\ttau$. 
\begin{figure}
\center
\includegraphics[angle=-90,width=0.24\textwidth]{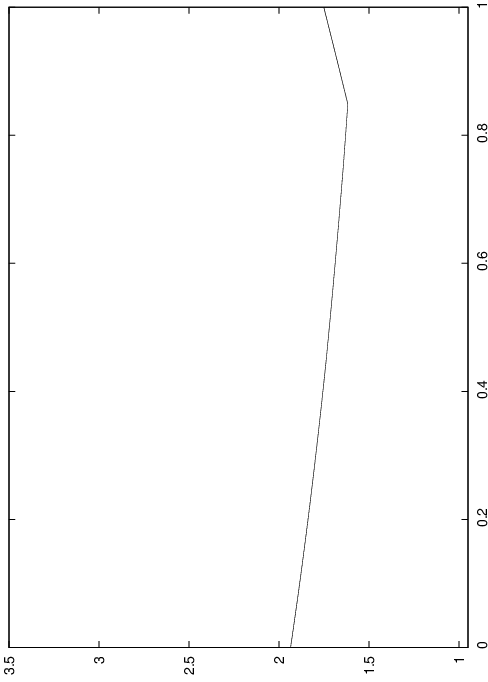}
\includegraphics[angle=-90,width=0.24\textwidth]{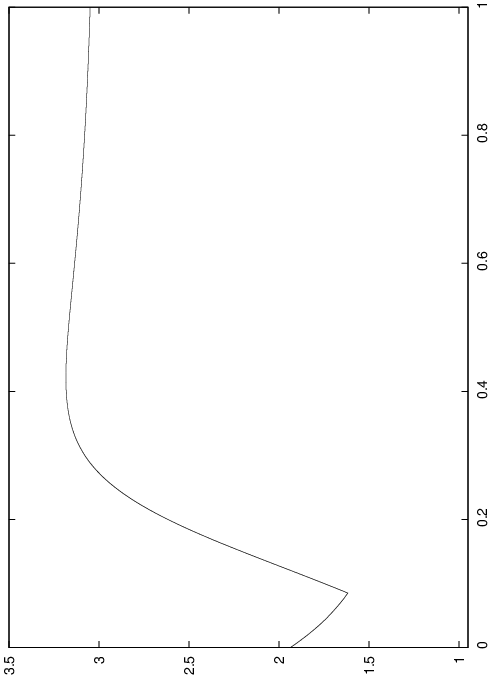}
\includegraphics[angle=-90,width=0.24\textwidth]{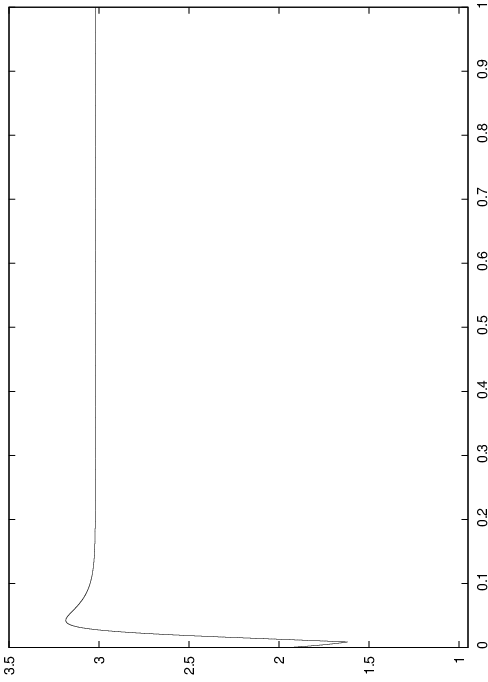}
\includegraphics[angle=-90,width=0.24\textwidth]{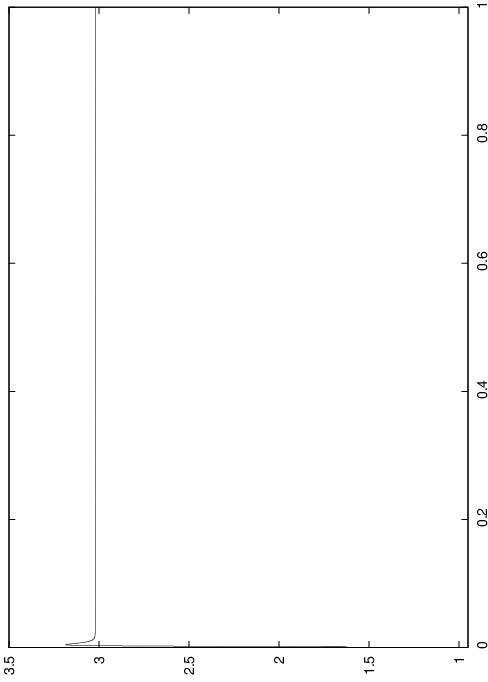}
\caption{$(\BGNsdstab_{m,\star})$
Plots of the ratio $\ratio^m$ for $\ttau = 10^{-k}$, $k=3,\ldots6$.}
\label{fig:TM3456}
\end{figure}%

In a second set of experiments to investigate the different tangential motions
induced by the individual schemes, we repeat the simulations in
Figure~\ref{fig:TM} now for the initial data displayed on the right of
Figure~\ref{fig:semicircle}.
We again use $J=64$ and $\ttau = 10^{-4}$, and show the relevant results in
Figure~\ref{fig:TMusp}.
Once again the scheme $(\BGNsdstab_m)^h$ breaks down
due to vertices moving to the left of the $x_2$--axis. 
For the remaining five schemes 
$(\BGNsd_m)^h$, $(\BGNsd_m)$,
$(\BGNsdstab_m)$, $(\BGNsdstab_{m,\star})^h$,
$(\BGNsdstab_{m,\star})$ the element ratios $\ratio^m$ at time $T=1$ are
$1.06$, $1.06$, $3.02$, $113.13$, $3.07$. 
Due to the very nonuniform initial data, the enclosed volume is only 
preserved well for the three 
schemes without numerical integration. In particular, the relative enclosed
volume losses for the five schemes 
are $20.6\%$, $-1.0\%$, $-0.9\%$, $35.9\%$, $-0.7\%$.
\begin{figure}
\center
\hspace*{-5mm}
\mbox{
\includegraphics[angle=-90,width=0.19\textwidth]{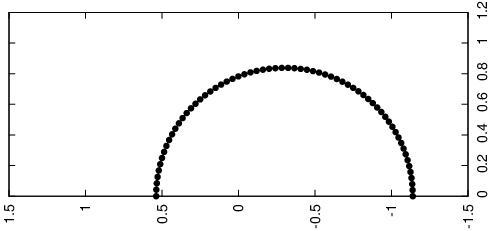}
\includegraphics[angle=-90,width=0.19\textwidth]{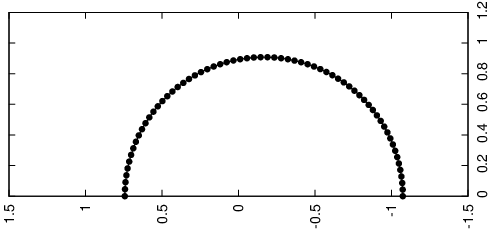}
\includegraphics[angle=-90,width=0.19\textwidth]{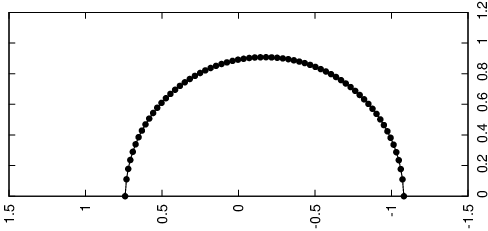}
\includegraphics[angle=-90,width=0.19\textwidth]{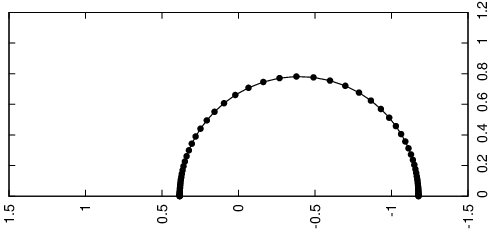}
\includegraphics[angle=-90,width=0.19\textwidth]{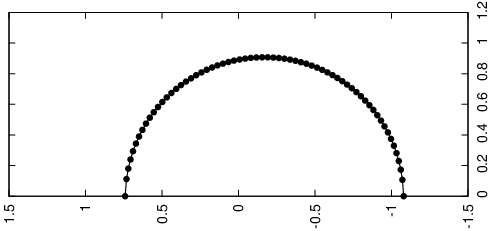}
}
\hspace*{-5mm}
\mbox{
\includegraphics[angle=-90,width=0.19\textwidth]{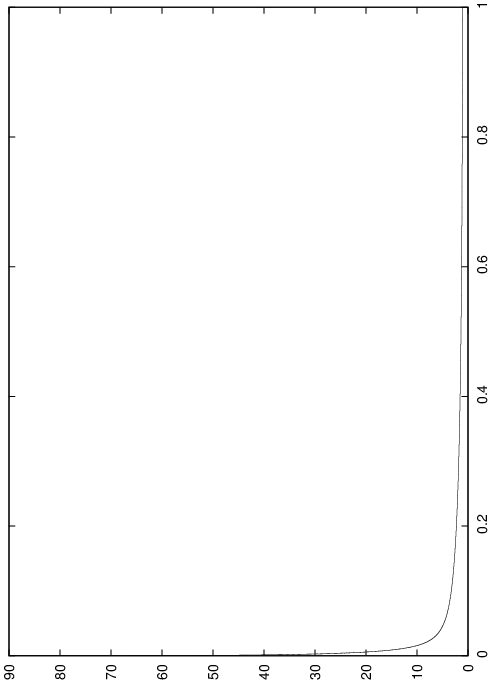}
\includegraphics[angle=-90,width=0.19\textwidth]{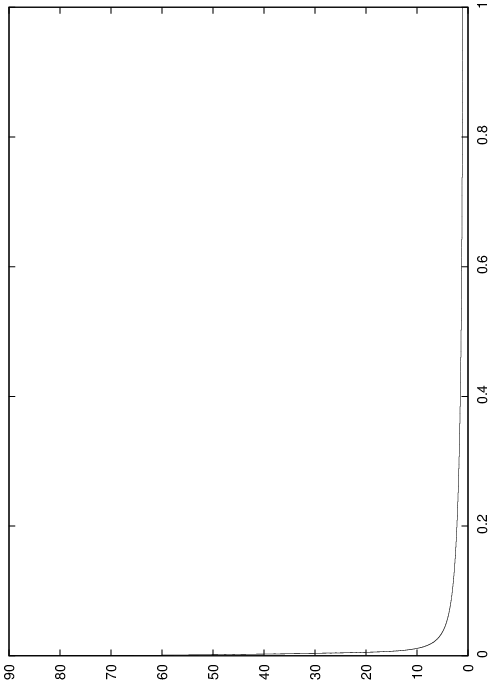}
\includegraphics[angle=-90,width=0.19\textwidth]{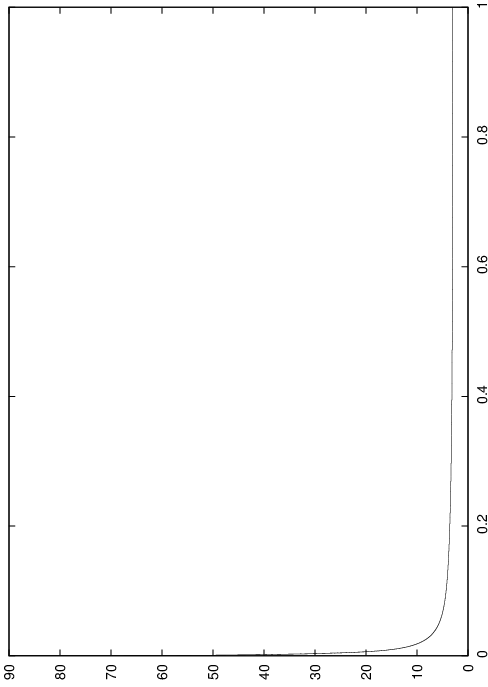}
\includegraphics[angle=-90,width=0.19\textwidth]{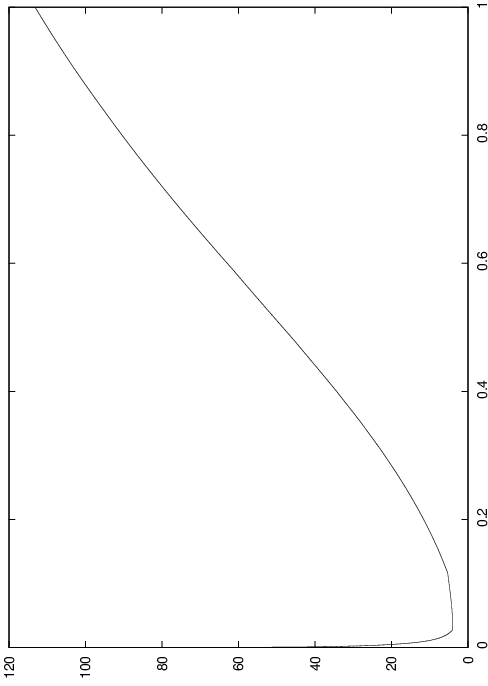}
\includegraphics[angle=-90,width=0.19\textwidth]{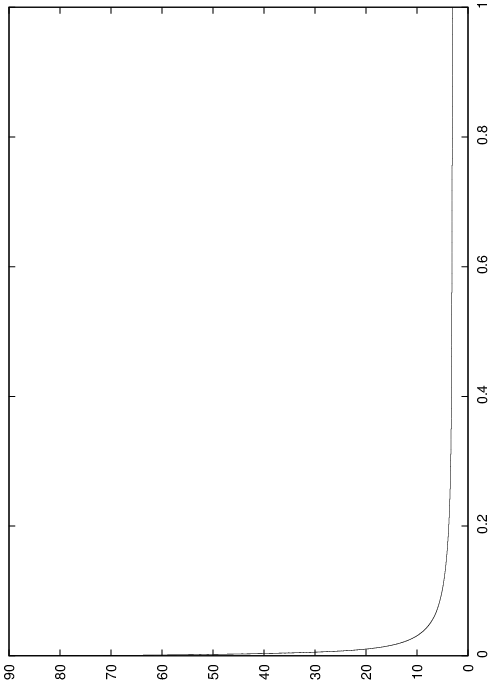}
}
\caption{
Comparison of the different schemes for surface diffusion towards a sphere.
Left to right: $(\BGNsd_m)^h$, $(\BGNsd_m)$,
$(\BGNsdstab_m)$, $(\BGNsdstab_{m,\star})^h$,
$(\BGNsdstab_{m,\star})$.
Plots are for $\pol X^m$ at time $t=1$ and for the ratio $\ratio^m$ over time.
The element ratios $\ratio^m$ at time $t=1$ are
$1.06$, $1.06$, $3.02$, $113.13$, $3.07$. The relative enclosed
volume losses are $20.6\%$, $-1.0\%$, $-0.9\%$, $35.9\%$, $-0.7\%$.
}
\label{fig:TMusp}
\end{figure}%

For the remainder of this subsection, we will only present numerical results
for the two schemes $(\BGNsd_m)^h$ and 
$(\BGNsdstab_{m,\star})$. Note that the former is a linear fully
discrete approximation of $(\BGNsd_h)^h$, for which the
equidistribution property (\ref{eq:equid}) holds. The latter, on the other
hand, is a nonlinear scheme that is unconditionally stable, recall
Theorem~\ref{thm:stabsd}, and, the semidiscrete scheme 
$(\BGNsdstab_h)$ that it is based on preserves the enclosed volume
exactly.
As the results for $(\BGNsd_m)^h$ and 
$(\BGNsdstab_{m,\star})$ are often indistinguishable, we only 
visualize the numerical results for the former, and will do so from now on 
in this section.

\subsubsection{Genus 0 surface}
An experiment for a rounded cylinder of total dimension $1\times7\times1$ 
can be seen in Figure~\ref{fig:sdcigar711}. Here we have once again that
$\partial_0 I = \partial I = \{0,1\}$.
The discretization parameters are $J=128$ and $\ttau = 10^{-4}$.
The relative volume loss for this experiment for $(\BGNsd_m)^h$ 
is $0.05\%$, while for $(\BGNsdstab_{m,\star})$ it is $0.00\%$.
\begin{figure}
\center
\begin{minipage}{0.4\textwidth}
\includegraphics[angle=-90,width=0.38\textwidth]{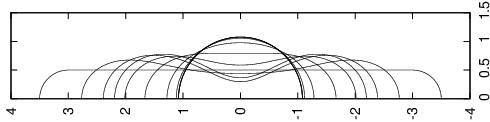} \quad
\includegraphics[angle=-90,width=0.45\textwidth]{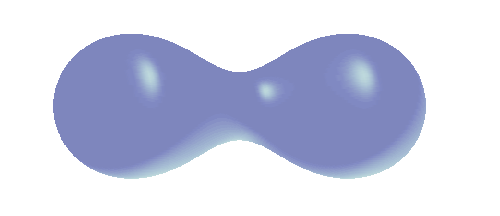}
\end{minipage} \qquad
\begin{minipage}{0.3\textwidth}
\includegraphics[angle=-90,width=0.95\textwidth]{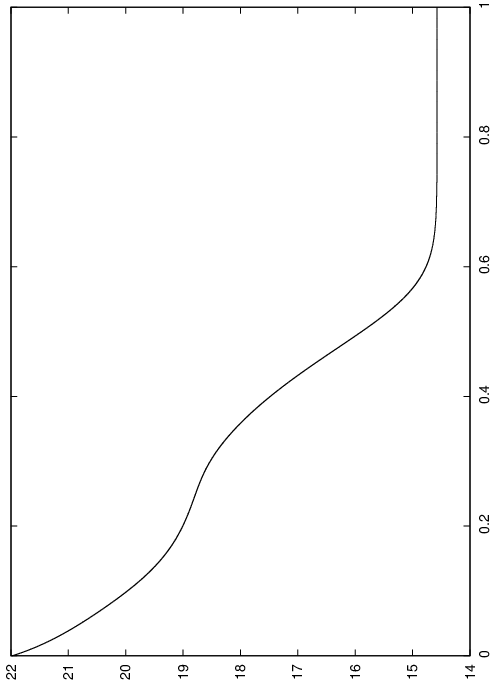} \\
\includegraphics[angle=-90,width=0.95\textwidth]{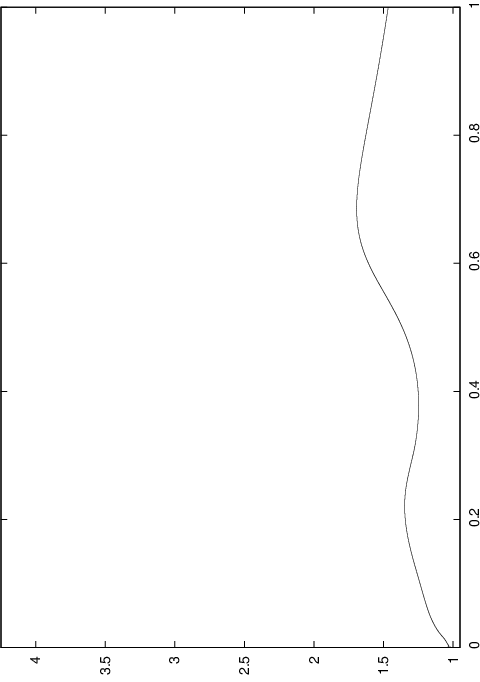} \\
\includegraphics[angle=-90,width=0.95\textwidth]{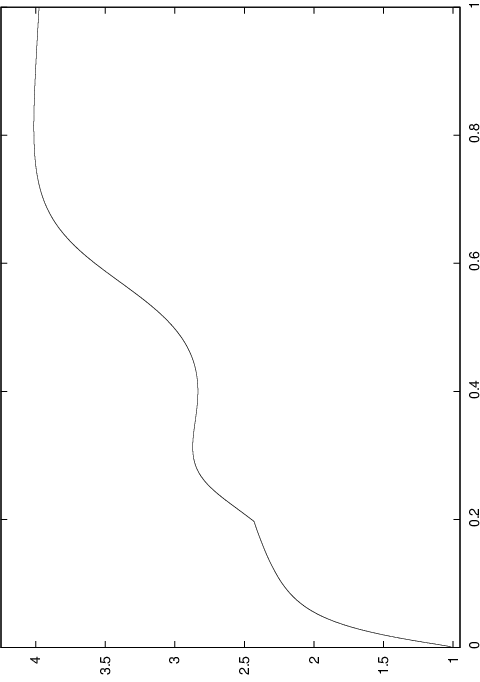}
\end{minipage}
\caption{
$(\BGNsd_m)^h$
Evolution for a rounded cylinder of dimension $1\times7\times1$. 
Plots are at times $t=0,0.1,\ldots,1$.
We also visualize the axisymmetric surface $\mathcal{S}^m$ generated by
$\Gamma^m$ at time $t=0.3$.
On the right are plots of the discrete energy and the ratio $\ratio^m$ and,
as a comparison, a plot of the ratio $\ratio^m$ for the scheme 
$(\BGNsdstab_{m,\star})$.
}
\label{fig:sdcigar711}
\end{figure}%
If we increase the aspect ratio of the initial data, then pinch-off can occur
during the evolution. We visualize this effect in Figure~\ref{fig:sdcigar811},
where as initial data we choose a rounded cylinder of total dimension
$1\times8\times1$. The discretization parameters are as before,
and the relative volume loss for $(\BGNsd_m)^h$ 
is $0.02\%$, while for $(\BGNsdstab_{m,\star})$ it is $0.00\%$.
\begin{figure}
\center
\begin{minipage}{0.4\textwidth}
\includegraphics[angle=-90,width=0.28\textwidth]{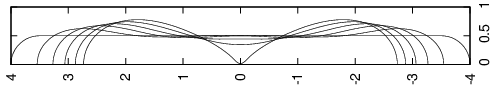} \quad
\includegraphics[angle=-90,width=0.5\textwidth]{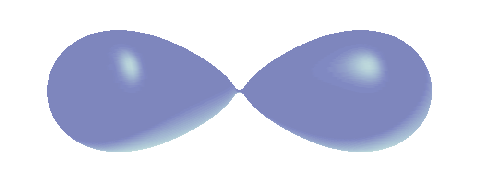}
\end{minipage} \qquad
\begin{minipage}{0.3\textwidth}
\includegraphics[angle=-90,width=0.95\textwidth]{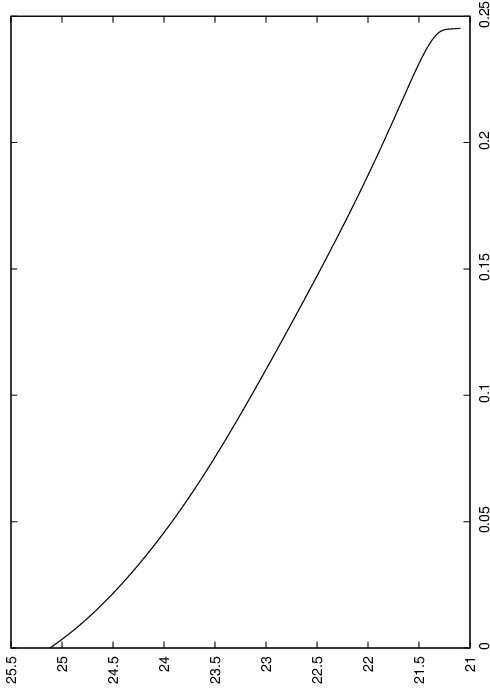} \\
\includegraphics[angle=-90,width=0.95\textwidth]{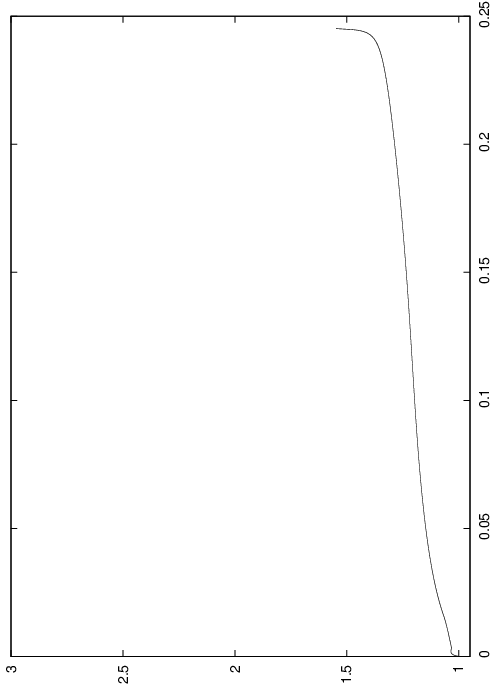} \\
\includegraphics[angle=-90,width=0.95\textwidth]{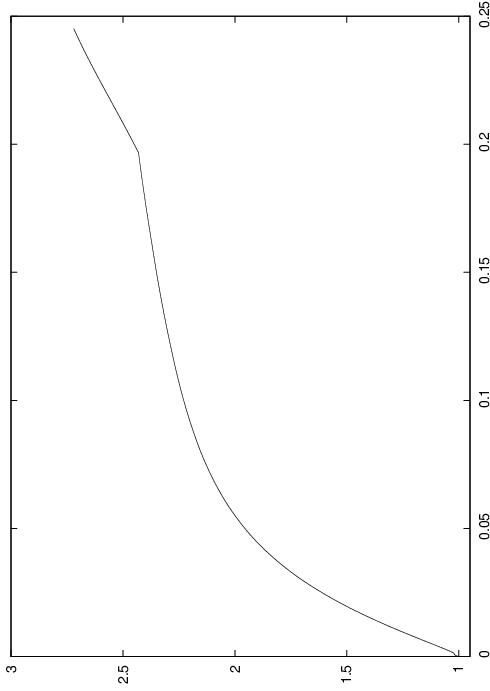}
\end{minipage}
\caption{
$(\BGNsd_m)^h$
Evolution for a rounded cylinder of dimension $1\times8\times1$. 
Plots are at times $t=0,0.05,\ldots,0.2,0.2452$.
We also visualize the axisymmetric surface $\mathcal{S}^m$ generated by
$\Gamma^m$ at time $t=0.2452$.
On the right are plots of the discrete energy and the ratio $\ratio^m$ and,
as a comparison, a plot of the ratio $\ratio^m$ for the scheme 
$(\BGNsdstab_{m,\star})$.
}
\label{fig:sdcigar811}
\end{figure}%
An experiment for a disc shape of total dimension $9 \times 1 \times 9$
is shown in Figure~\ref{fig:sdflatcigar}.
The discretization parameters are $J=128$ and $\ttau = 10^{-3}$.
The relative volume loss for this experiment for $(\BGNsd_m)^h$ 
is $0.03\%$, while for $(\BGNsdstab_{m,\star})$ it is $0.04\%$.
\begin{figure}
\center
\begin{minipage}{0.35\textwidth}
\includegraphics[angle=-90,width=0.95\textwidth]{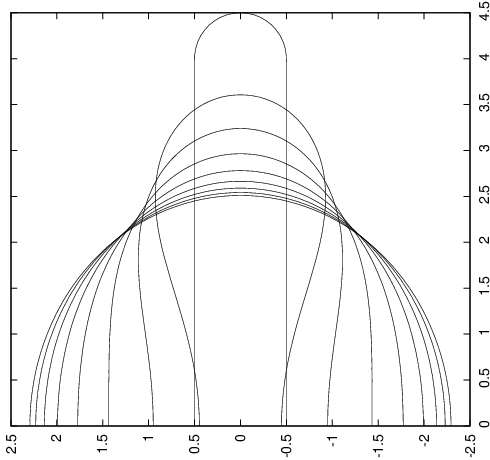} \\
\includegraphics[angle=-90,width=0.9\textwidth]{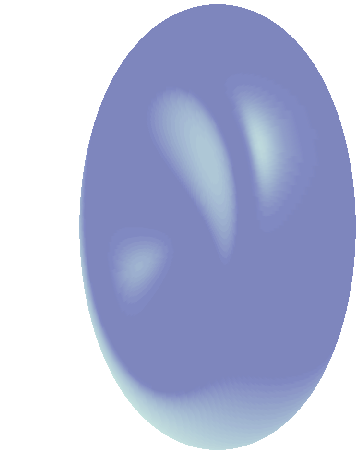}
\end{minipage} \qquad
\begin{minipage}{0.3\textwidth}
\includegraphics[angle=-90,width=0.95\textwidth]{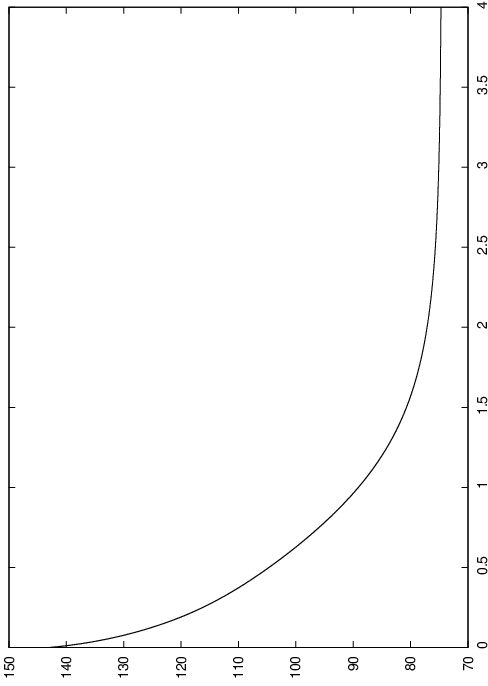} \\
\includegraphics[angle=-90,width=0.95\textwidth]{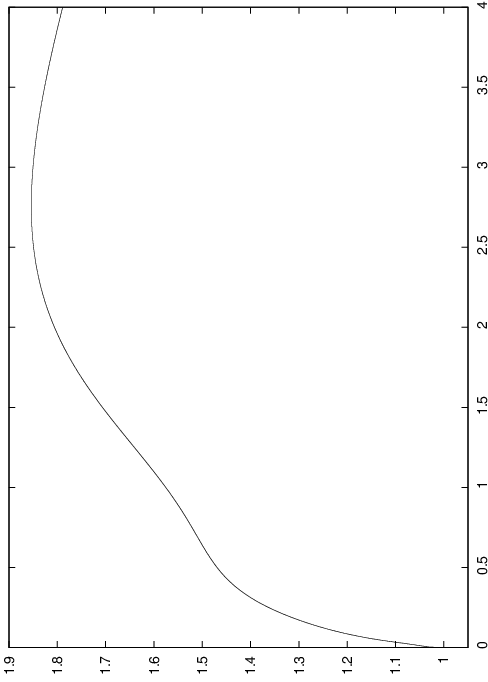} \\
\includegraphics[angle=-90,width=0.95\textwidth]{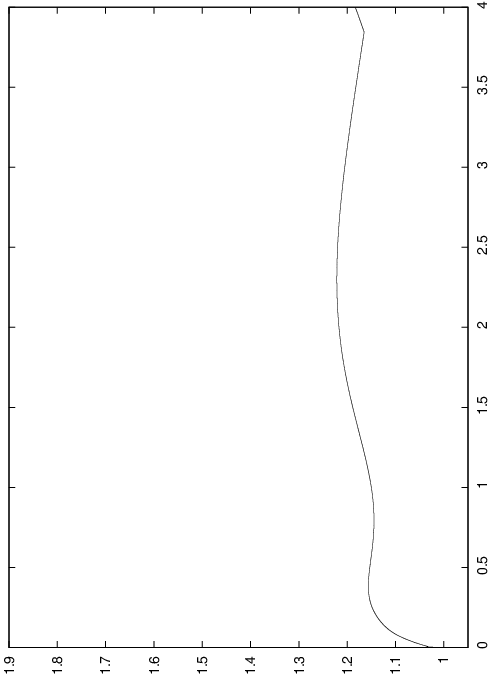}
\end{minipage}
\caption{
$(\BGNsd_m)^h$
Evolution for a disc of dimension $9\times1\times9$. 
Plots are at times $t=0,0.5,\ldots,4$. 
We also visualize the axisymmetric surface $\mathcal{S}^m$ generated by
$\Gamma^m$ at time $t=0.5$.
On the right are plots of the discrete energy and the ratio $\ratio^m$ and,
as a comparison, a plot of the ratio $\ratio^m$ for the scheme 
$(\BGNsdstab_{m,\star})$.
}
\label{fig:sdflatcigar}
\end{figure}%
We notice that although for the time step size $\ttau = 10^{-3}$, the element
ratio for the scheme $(\BGNsdstab_{m,\star})$ is smaller than for
$(\BGNsd_m)^h$, this is no longer the case for smaller time step
sizes. For smaller time step sizes, the ratio approaches the value $1$ very
quickly for the scheme $(\BGNsd_m)^h$, while for 
$(\BGNsdstab_{m,\star})$ it can reach much larger values, before
eventually approaching a value closer to $4$. 
See Figure~\ref{fig:sdflatcigarTM}
for some ratio plots for $(\BGNsdstab_{m,\star})$ when
$\ttau = 10^{-k}$, $k=4,5,6$.
\begin{figure}
\includegraphics[angle=-90,width=0.3\textwidth]{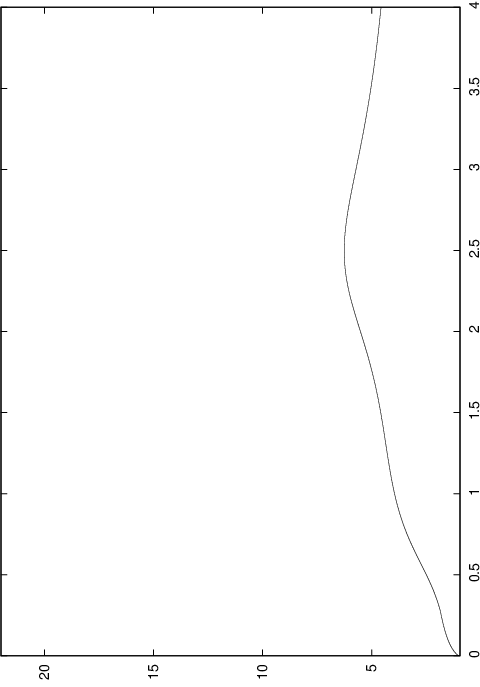}
\includegraphics[angle=-90,width=0.3\textwidth]{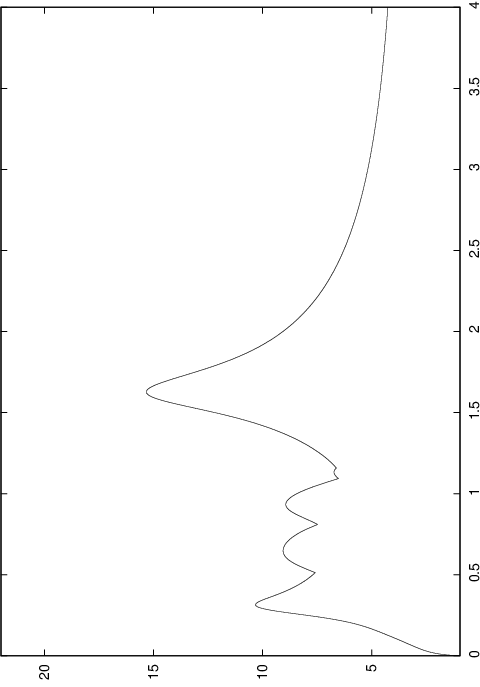}
\includegraphics[angle=-90,width=0.3\textwidth]{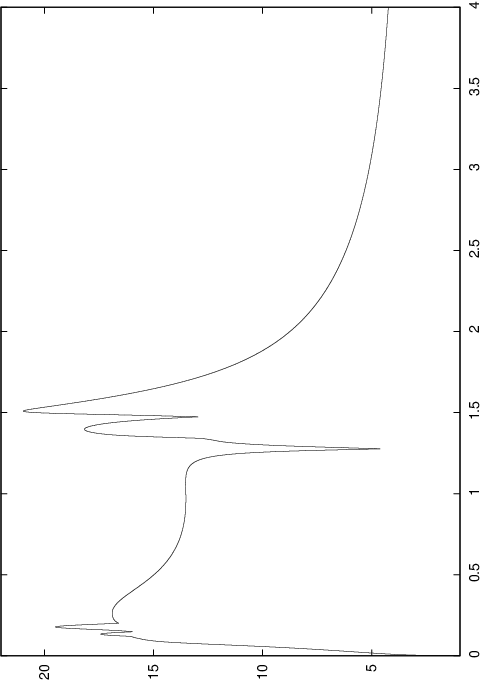}
\caption{
$(\BGNsdstab_{m,\star})$
Plot of the ratio $\ratio^m$ for $\ttau = \ttau = 10^{-k}$, $k=4,5,6$.
}
\label{fig:sdflatcigarTM}
\end{figure}%
We note that this behaviour appears to be generic for all our numerical
experiments for surface diffusion.

\subsubsection{Torus}
In order to model the evolution of a torus, we set $I = \RZ$, so that
$\partial I = \emptyset$.
For a torus with $R=1$, $r=0.25$, we obtain a surface that closes
up towards a genus-0 surface, as in \cite[Fig.\ 14]{gflows3d}.
See Figure~\ref{fig:sdtorusR1r025} for the simulation results, where we note
that the surface closing up represents a singularity for the parametric
approach. In particular, some vertices of $\pol X^m$ are approaching 
the $x_2$--axis, which leads to a moderate increase in the element ratio 
(\ref{eq:ratio}).
The discretization parameters for this experiment 
are $J=256$ and $\ttau = 10^{-5}$.
The observed relative volume loss is $0.02\%$ for both 
the schemes $(\BGNsd_m)^h$ and $(\BGNsdstab_{m,\star})$.
\begin{figure}
\center
\begin{minipage}{0.35\textwidth}
\includegraphics[angle=-90,width=0.95\textwidth]{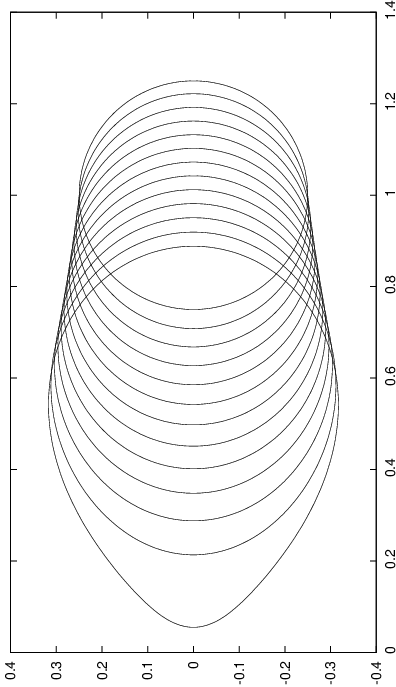} \\
\includegraphics[angle=-90,width=0.7\textwidth]{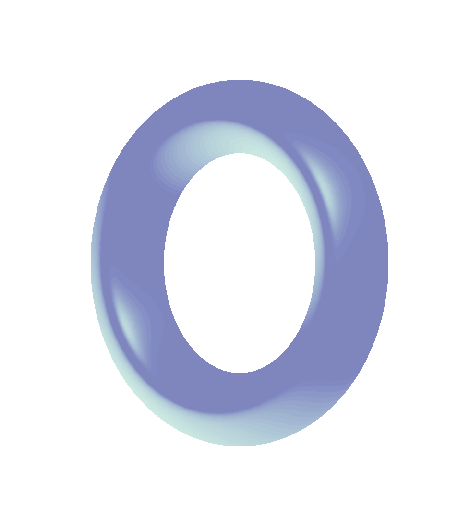}
\includegraphics[angle=-90,width=0.7\textwidth]{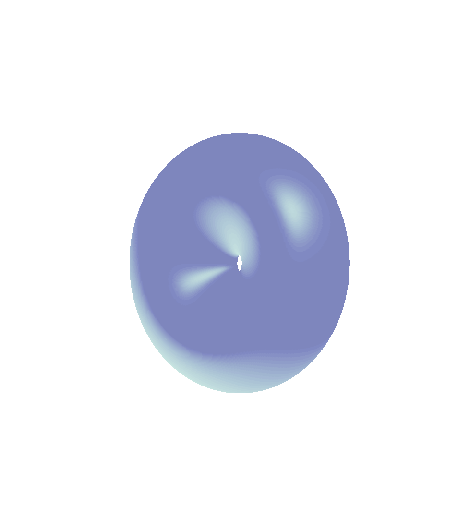}
\end{minipage} \qquad
\begin{minipage}{0.3\textwidth}
\includegraphics[angle=-90,width=0.95\textwidth]{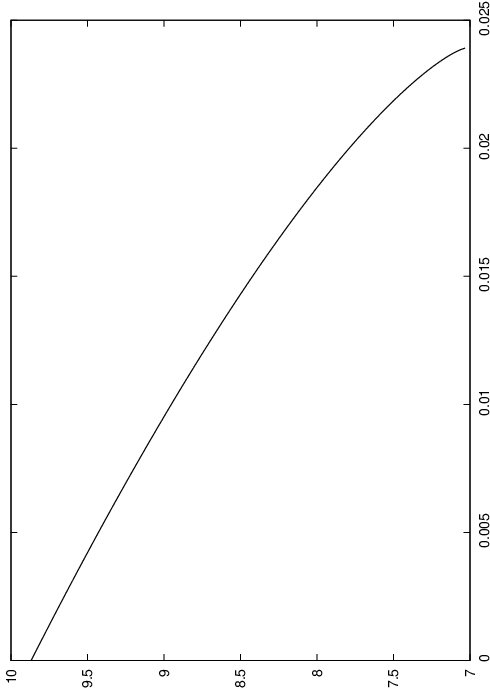}
\includegraphics[angle=-90,width=0.95\textwidth]{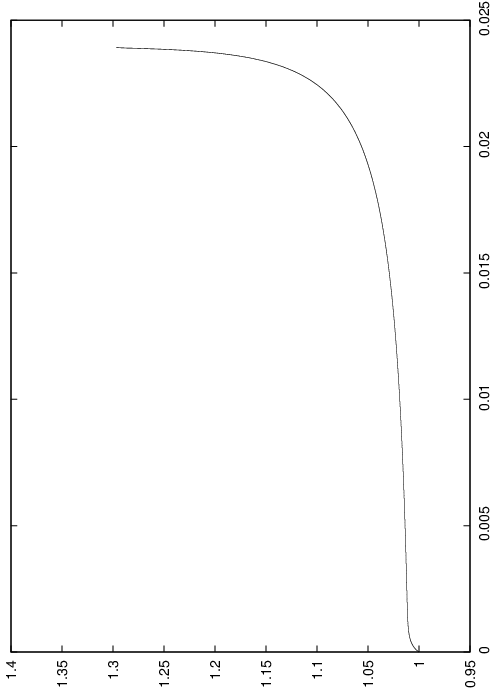}
\includegraphics[angle=-90,width=0.95\textwidth]{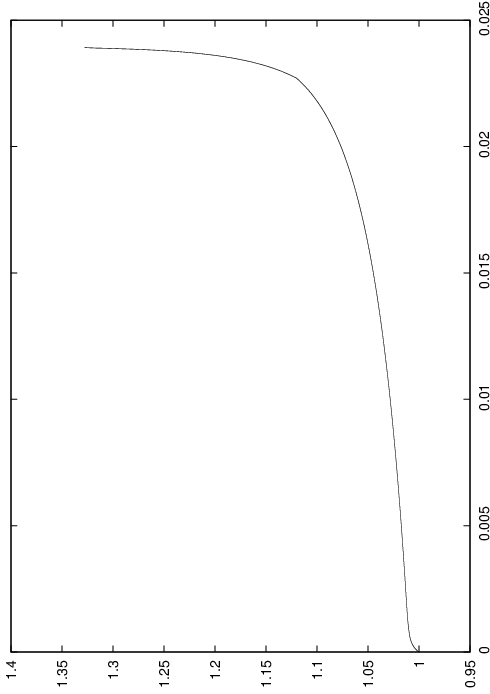}
\end{minipage}
\caption{
$(\BGNsd_m)^h$
Evolution for a torus with $R=1$ and $r=0.25$. Plots are at times
$t=0,0.002,\ldots,0.022,0.02392$. 
We also visualize the axisymmetric surface $\mathcal{S}^m$ generated by
$\Gamma^m$ at times $t=0$ (above) and $t=0.02392$ (below).
On the right are plots of the discrete energy and the ratio $\ratio^m$ and,
as a comparison, a plot of the ratio $\ratio^m$ for the scheme 
$(\BGNsdstab_{m,\star})$.}
\label{fig:sdtorusR1r025}
\end{figure}%
A detailed view of the vertex distribution at the final time, $t=0.02392$,
for the schemes $(\BGNsd_m)^h$ and $(\BGNsdstab_{m,\star})$ is given in
Figure~\ref{fig:sdtorusTM}. Here we note that the element ratios $\ratio^m$ 
at this time are $1.30$ and $1.33$, respectively. Hence the proximity of the 
$x_2$--axis has no dramatic effect on the vertex distribution.
\begin{figure}
\center
\includegraphics[angle=-90,width=0.25\textwidth]{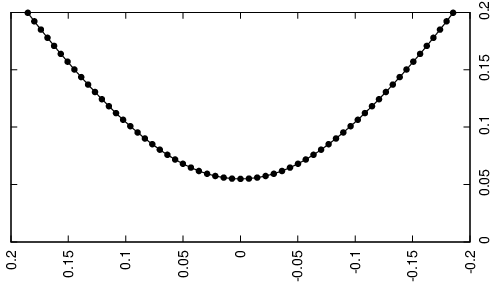}
\qquad
\includegraphics[angle=-90,width=0.25\textwidth]{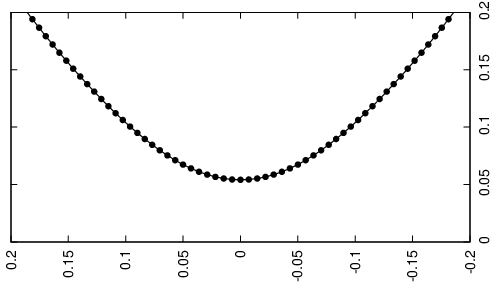}
\caption{
Detail of the vertex distribution at time $t=0.02392$ for the experiment in
Figure~\ref{fig:sdtorusR1r025} for the schemes
$(\BGNsd_m)^h$ (left) and $(\BGNsdstab_{m,\star})$ (right).
}
\label{fig:sdtorusTM}
\end{figure}%

\subsubsection{Droplet on a substrate}
Here we consider the evolution for a droplet on a substrate, so that e.g.\
$\partial_2 I = \{0\}$ and $\partial_0 I = \{1\}$.
See Figure~\ref{fig:sddrop-12} for a simulation for the choice
$\sliprho^{(0)}=-\frac12$.
Here we use $J=64$ and $\ttau = 10^{-3}$.
The relative volume loss for this experiment is $-0.64\%$ for the
scheme $(\BGNsd_m)^h$ and $-0.61\%$ for the scheme
$(\BGNsdstab_{m,\star})$.
\begin{figure}
\center
\begin{minipage}{0.35\textwidth}
\includegraphics[angle=-90,width=0.65\textwidth]{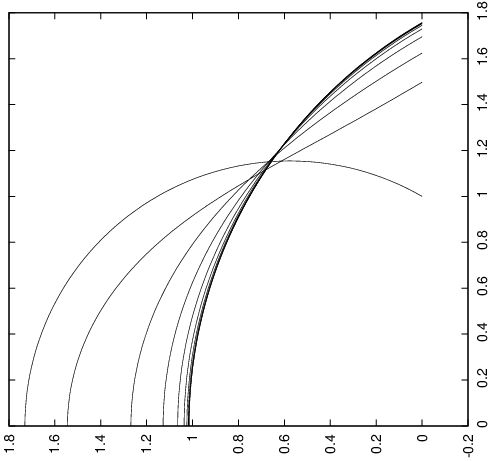} \\[5mm]
\includegraphics[angle=-90,width=0.65\textwidth]{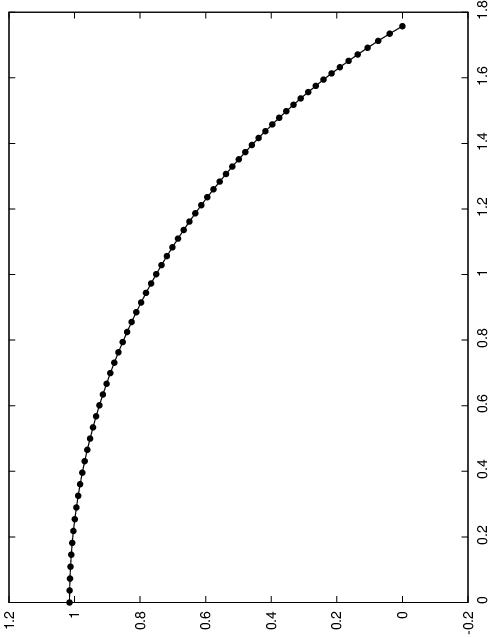} 
\includegraphics[angle=-90,width=0.95\textwidth]{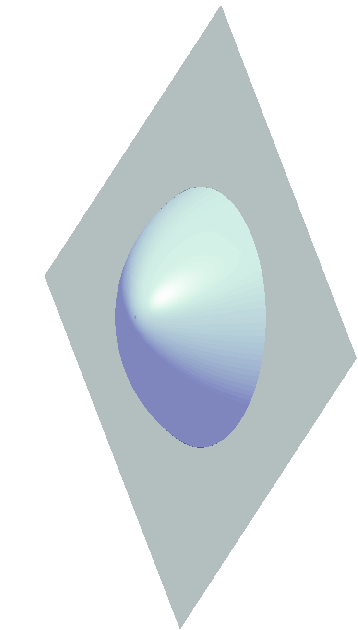} 
\end{minipage} \qquad
\begin{minipage}{0.3\textwidth}
\includegraphics[angle=-90,width=0.95\textwidth]{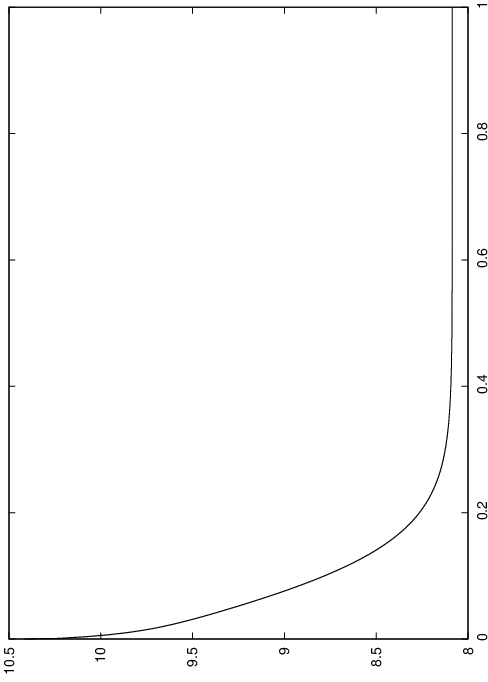} 
\includegraphics[angle=-90,width=0.95\textwidth]{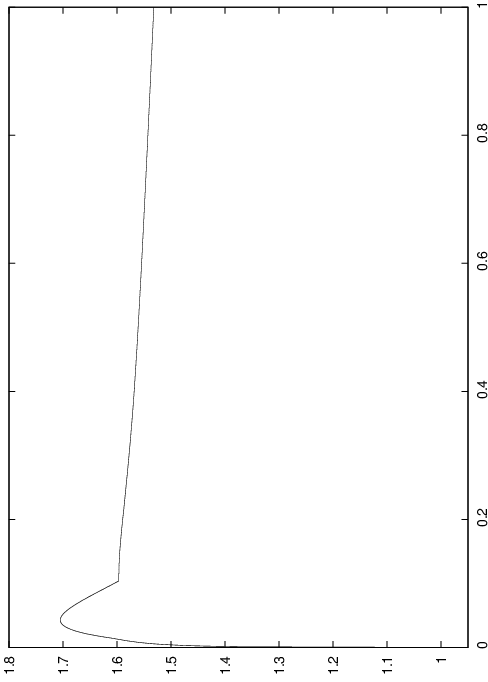} 
\includegraphics[angle=-90,width=0.95\textwidth]{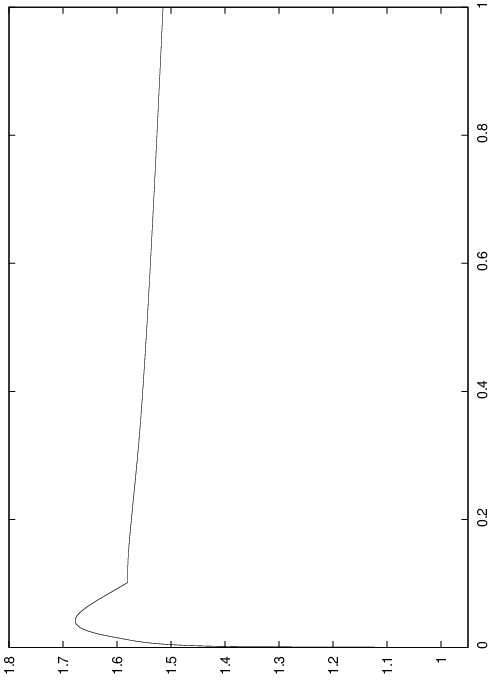} 
\end{minipage}
\caption{
$(\BGNsd_m)^h$ [$\partial_0 I = \{1\}$, $\partial_2 I = \{0\}$, 
$\sliprho^{(0)} =-\frac12$]
Evolution for a droplet attached to $\bR\times \{0\} \times \bR$. 
Solutions at times $t=0,0.1,\ldots,1$ and at time $t=1$.
We also visualize the axisymmetric surface $\mathcal{S}^m$ generated by
$\Gamma^m$ at time $t=1$.
On the right are plots of the discrete energy and the ratio 
(\ref{eq:ratio}) and,
as a comparison, a plot of the ratio $\ratio^m$ for the scheme 
$(\BGNsdstab_{m,\star})$.}
\label{fig:sddrop-12}
\end{figure}%
The same experiment with $\sliprho^{(0)} = 0.9$ can be seen in
Figure~\ref{fig:sddrop09}.
The relative volume loss for this experiment is $-0.13\%$ for the
scheme $(\BGNsd_m)^h$ and $-0.10\%$ for the scheme
$(\BGNsdstab_{m,\star})$.
\begin{figure}
\center
\begin{minipage}{0.35\textwidth}
\includegraphics[angle=-90,width=0.45\textwidth]{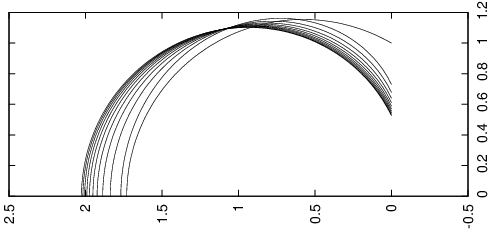}
\includegraphics[angle=-90,width=0.45\textwidth]{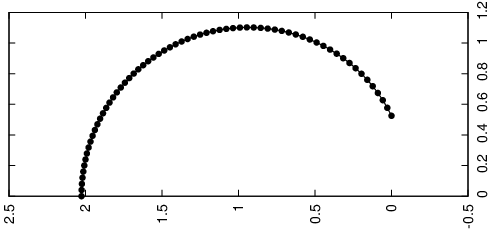} 
\includegraphics[angle=-90,width=0.95\textwidth]{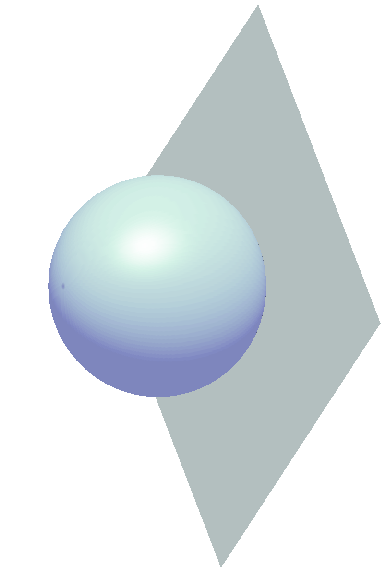} 
\end{minipage} \qquad
\begin{minipage}{0.3\textwidth}
\includegraphics[angle=-90,width=0.95\textwidth]{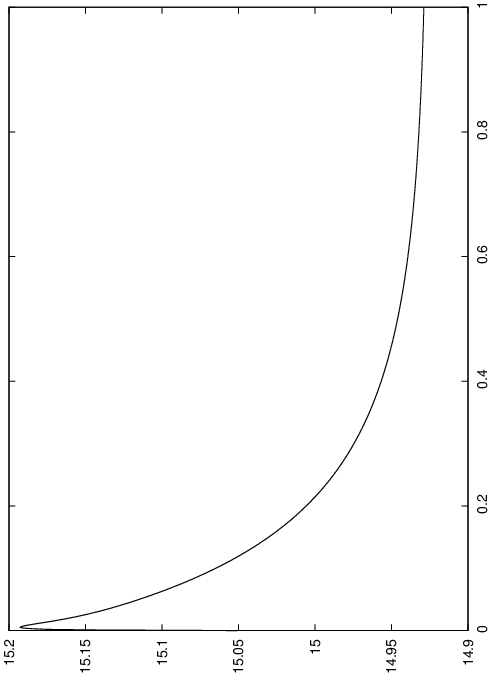} 
\includegraphics[angle=-90,width=0.95\textwidth]{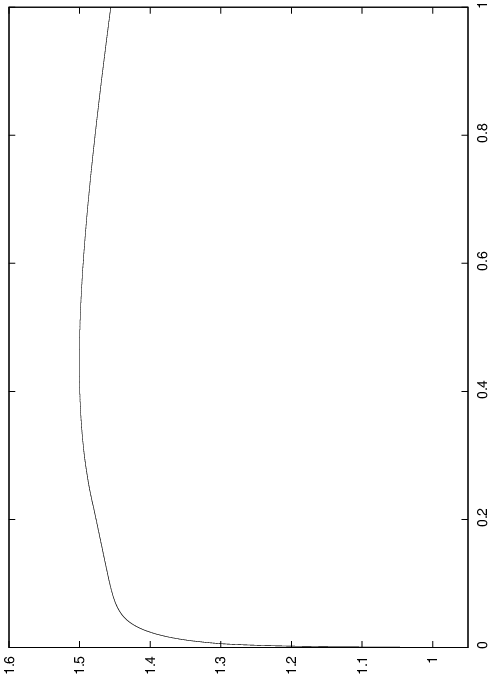} 
\includegraphics[angle=-90,width=0.95\textwidth]{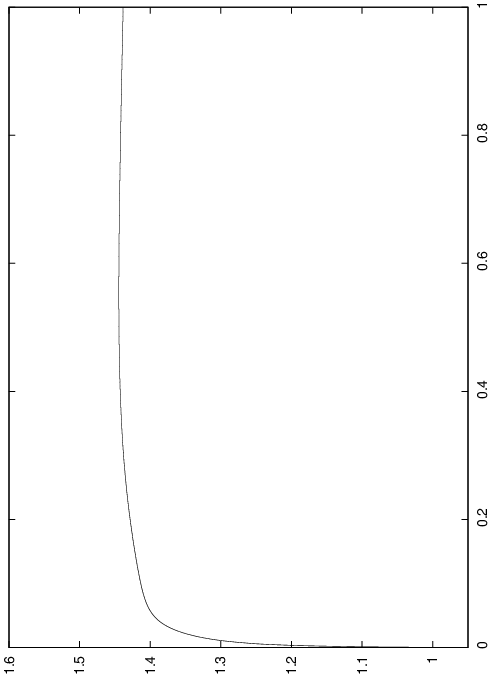} 
\end{minipage}
\caption{
$(\BGNsd_m)^h$ [$\partial_0 I = \{1\}$, $\partial_2 I = \{0\}$, 
$\sliprho^{(0)} = 0.9$]
Evolution for a droplet attached to $\mathcal{B}$.
Solutions at times $t=0,0.1,\ldots,1$ and at time $t=1$.
We also visualize the axisymmetric surface $\mathcal{S}^m$ generated by
$\Gamma^m$ at time $t=1$.
On the right are plots of the discrete energy and the ratio 
(\ref{eq:ratio}) and,
as a comparison, a plot of the ratio $\ratio^m$ for the scheme 
$(\BGNsdstab_{m,\star})$.}
\label{fig:sddrop09}
\end{figure}%

\subsubsection{Cut genus 1 surface on a substrate}

In this section, we show some experiments for the upper half of a genus 1
surface attached to the hyperplane 
$\bR\times\{0\}\times\bR$, so that $\partial_2 I = \partial I = \{0,1\}$.
See Figure~\ref{fig:torusslip} for an experiment with 
$J=129$ and $\ttau = 10^{-4}$.
The relative volume loss for this experiment is $0.47\%$ for the
scheme $(\BGNsd_m)^h$ and $0.43\%$ for the scheme $(\BGNsdstab_{m,\star})$.
\begin{figure}
\center
\begin{minipage}{0.35\textwidth}
\includegraphics[angle=-90,width=0.95\textwidth]{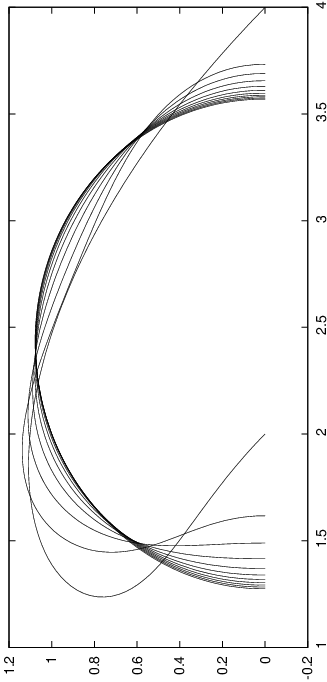}
\\[5mm]
\includegraphics[angle=-90,width=0.95\textwidth]{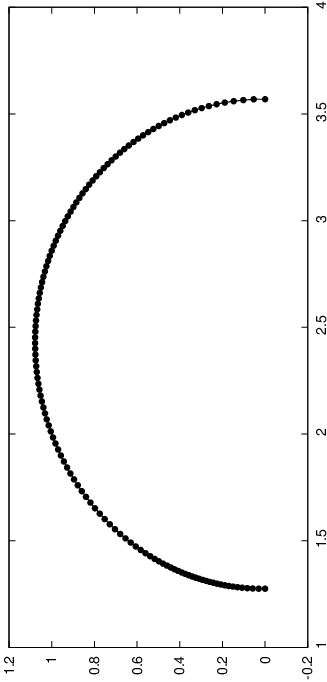} 
\\[5mm]
\includegraphics[angle=-90,width=0.95\textwidth]{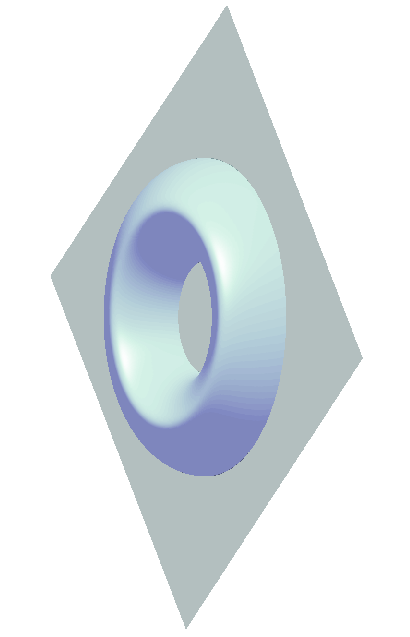} 
\end{minipage} \qquad
\begin{minipage}{0.3\textwidth}
\includegraphics[angle=-90,width=0.95\textwidth]{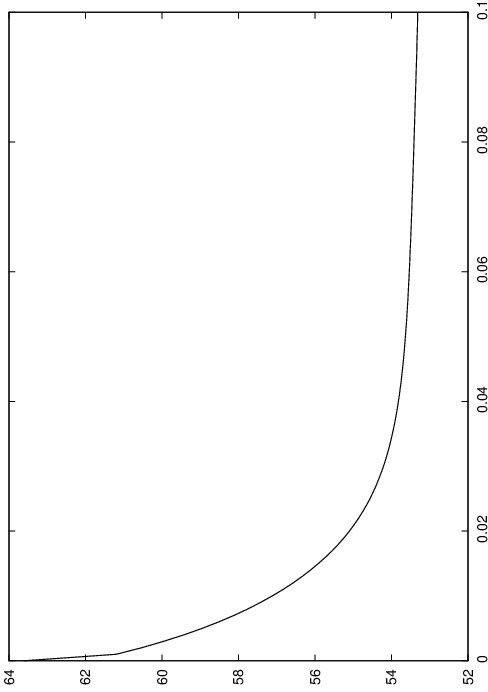} 
\includegraphics[angle=-90,width=0.95\textwidth]{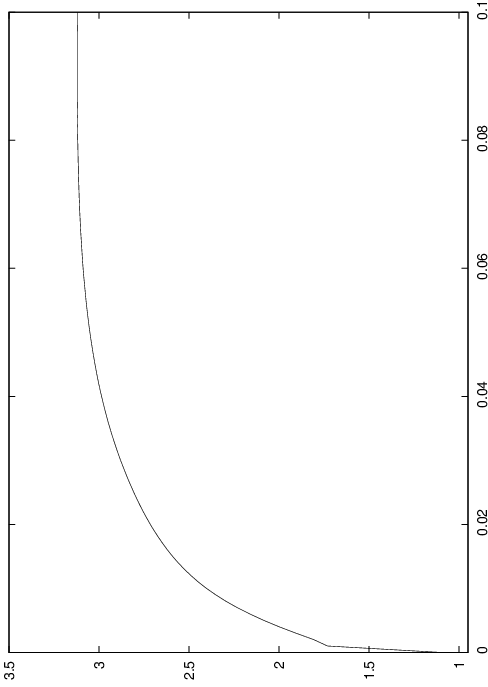} 
\includegraphics[angle=-90,width=0.95\textwidth]{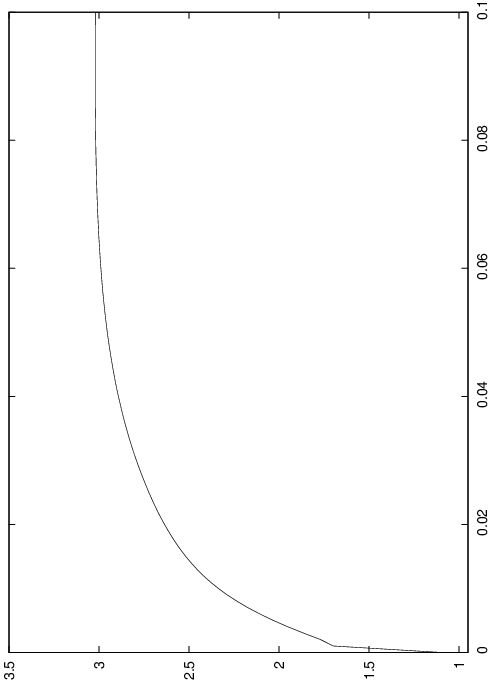} 
\end{minipage}
\caption{
$(\BGNsd_m)^h$ [$\partial_2 I = \partial I = \{0,1\}$,
$\sliprho^{(0)} = \sliprho^{(1)} =0$]
Evolution for the upper half of a genus 1 surface attached to 
$\bR\times\{0\}\times\bR$.
Solutions at times $t=0,0.01,\ldots,0.1$ and at time $t=0.1$.
We also visualize the axisymmetric surface $\mathcal{S}^m$ generated by
$\Gamma^m$ at time $t=0.1$.
On the right are plots of the discrete energy and the ratio 
(\ref{eq:ratio}) and,
as a comparison, a plot of the ratio $\ratio^m$ for the scheme 
$(\BGNsdstab_{m,\star})$.}
\label{fig:torusslip}
\end{figure}%
The same experiment with $\sliprho^{(0)} = -\sliprho^{(1)} = \tfrac12$ 
can be seen in Figure~\ref{fig:torusslip12-12}.
The relative volume loss for this experiment is $-0.25\%$ for both schemes
$(\BGNsd_m)^h$ and $(\BGNsdstab_{m,\star})$.
\begin{figure}
\center
\begin{minipage}{0.4\textwidth}
\includegraphics[angle=-90,width=0.99\textwidth]{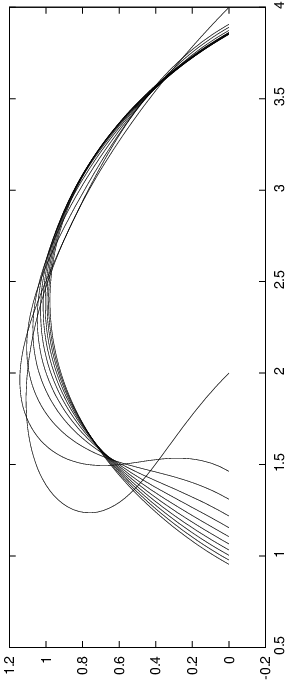}
\\[5mm]
\includegraphics[angle=-90,width=0.99\textwidth]{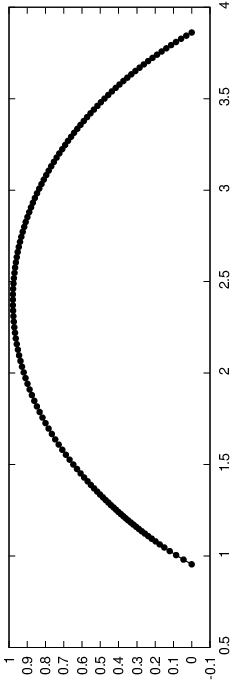} 
\\[5mm]
\includegraphics[angle=-90,width=0.95\textwidth]{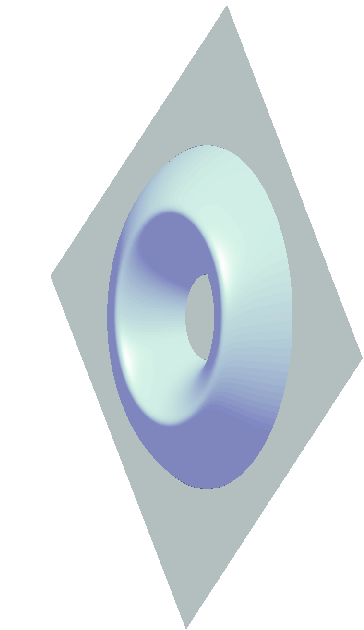} 
\end{minipage} \quad
\begin{minipage}{0.3\textwidth}
\includegraphics[angle=-90,width=0.95\textwidth]{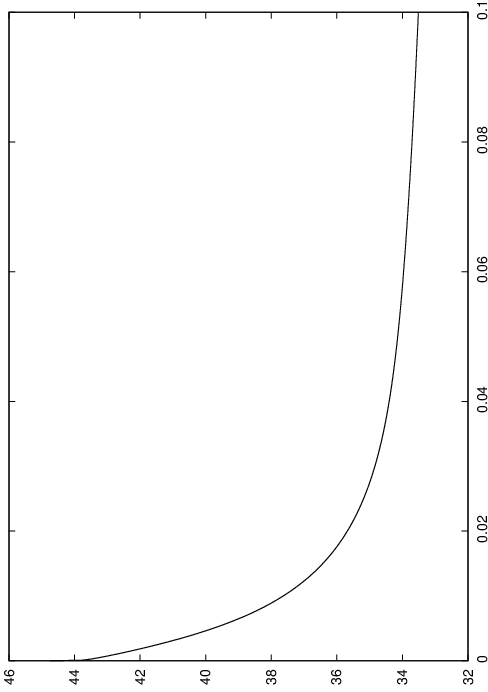} 
\includegraphics[angle=-90,width=0.95\textwidth]{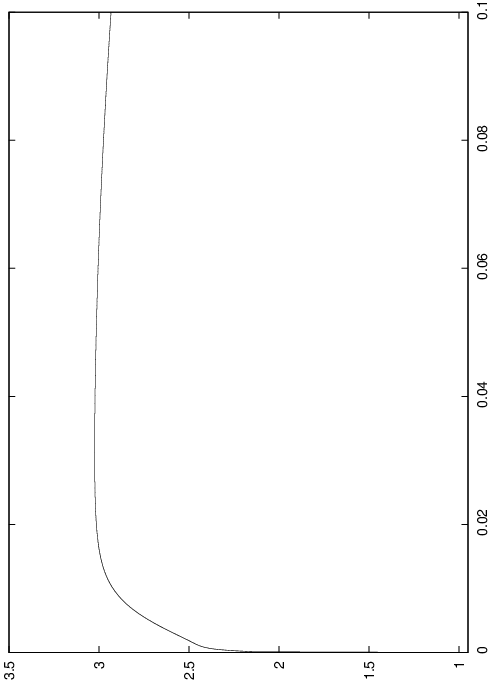} 
\includegraphics[angle=-90,width=0.95\textwidth]{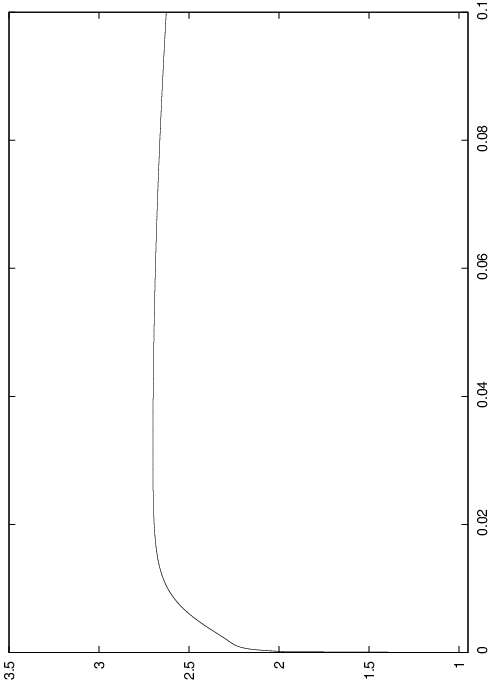} 
\end{minipage}
\caption{
$(\BGNsd_m)^h$ [$\partial_2 I = \partial I = \{0,1\}$,
$\sliprho^{(0)} = -\sliprho^{(1)} = \frac12$]
Evolution for the upper half of a genus 1 surface attached to 
$\bR\times\{0\}\times\bR$.
Solutions at times $t=0,0.01,\ldots,0.1$ and at time $t=0.1$.
We also visualize the axisymmetric surface $\mathcal{S}^m$ generated by
$\Gamma^m$ at time $t=0.1$.
On the right are plots of the discrete energy and the ratio 
(\ref{eq:ratio}) and, as a comparison, a plot of the ratio $\ratio^m$ for the scheme 
$(\BGNsdstab_{m,\star})$.}
\label{fig:torusslip12-12}
\end{figure}%

\subsubsection{Cut cylinder between two hyperplanes}

In this subsection we repeat the computations in 
\cite[Figs.\ 21,\ 22]{clust3d} for two open dumbbell-like cylindrical shapes 
attached to two parallel hyperplanes, see
Figures~\ref{fig:clust3dfig21} and \ref{fig:clust3dfig22},
and so we let $\partial_2 I= \partial I = \{0,1\}$.
In particular, 
in these experiments the two components of the boundary of $\mathcal{S}^m$
are attached to two distinct parallel hyperplanes. That means that 
$\pol X^m(0)$ is attached to the $x_1$--axis, while $\pol X^m(1)$ remains on
the line $\bR \times \{ a \}$, with $a = 4$ in Figure~\ref{fig:clust3dfig21}
and $a=8$ in Figure~\ref{fig:clust3dfig22}.
The initial data are given by
$\Gamma(0) = \{ (1 + \alpha\,\cos(2\,\pi\,\rho), \rho\,a)^T : 
\rho \in [0,1] \}$,
with $\alpha=0.5$ and $\alpha=0.25$, respectively.
For the discretization parameters we choose $J=128$ and $\ttau = 10^{-3}$.
The relative volume losses for these experiments are $-0.02\%$
and $-0.01\%$ for the scheme $(\BGNsd_m)^h$,
and $-0.01\%$ in both cases for the scheme $(\BGNsdstab_{m,\star})$.
\begin{figure}
\center
\begin{minipage}{0.2\textwidth}
\includegraphics[angle=-90,width=0.9\textwidth]{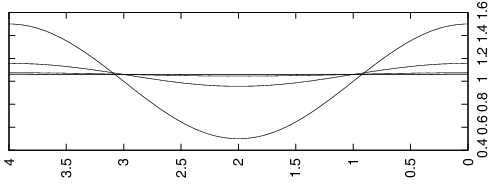} 
\end{minipage} \quad
\begin{minipage}{0.30\textwidth}
\includegraphics[angle=-90,width=0.95\textwidth]{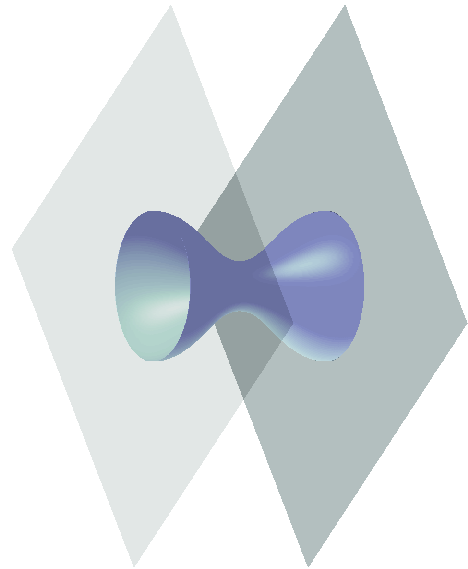} 
\includegraphics[angle=-90,width=0.95\textwidth]{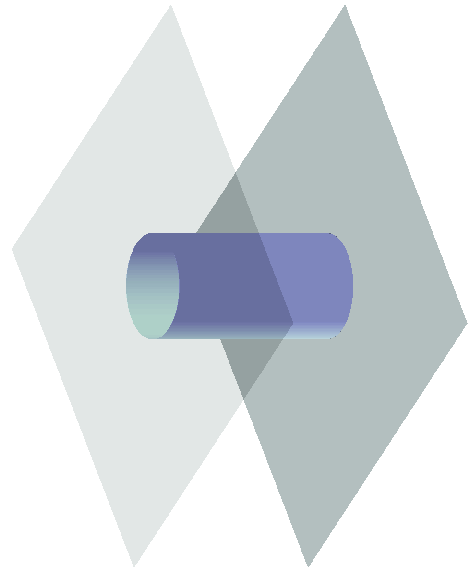} 
\end{minipage} \quad
\begin{minipage}{0.3\textwidth}
\includegraphics[angle=-90,width=0.95\textwidth]{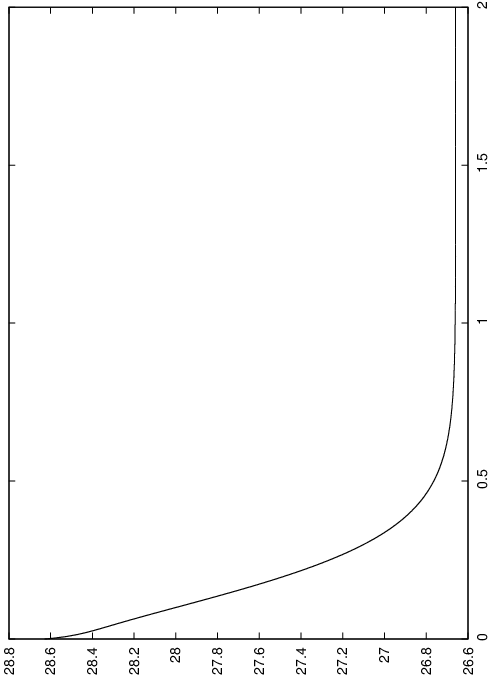} 
\includegraphics[angle=-90,width=0.95\textwidth]{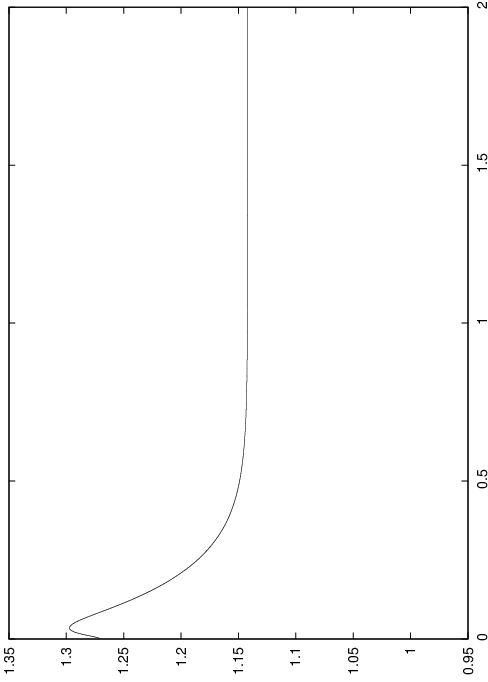}
\includegraphics[angle=-90,width=0.95\textwidth]{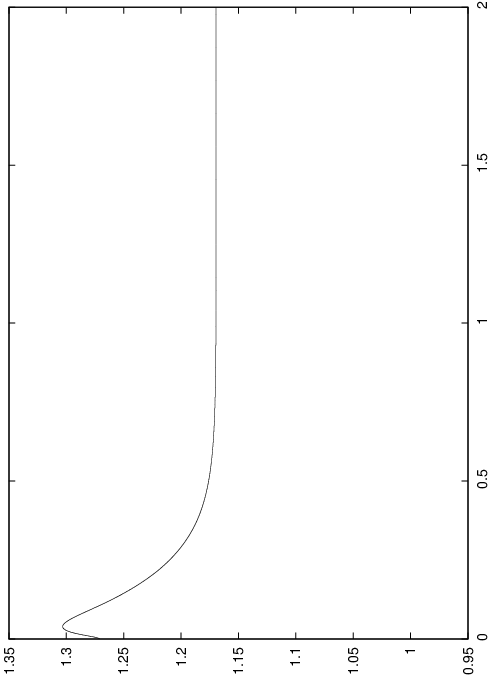}
\end{minipage} \quad
\caption{
$(\BGNsd_m)^h$ [$\partial_2 I = \partial I = \{0,1\}$, 
$\sliprho^{(0)} = \sliprho^{(1)} = 0$]
Evolution for an open dumbbell-like cylindrical shape 
attached to $\bR \times \{ 0 \} \times \bR$
and $\bR \times \{ 4 \} \times \bR$. Solution at times $t=0,0.5,\ldots,2$.
We also visualize the axisymmetric surface $\mathcal{S}^m$ generated by
$\Gamma^m$ at times $t=0$ (above) and $t=2$ (below).
On the right are plots of the discrete energy and the ratio 
(\ref{eq:ratio}) and, as a comparison, a plot of the ratio $\ratio^m$ for the 
scheme $(\BGNsdstab_{m,\star})$.}
\label{fig:clust3dfig21}
\end{figure}%
\begin{figure}
\center
\begin{minipage}{0.25\textwidth}
\mbox{
\includegraphics[angle=-90,width=0.4\textwidth]{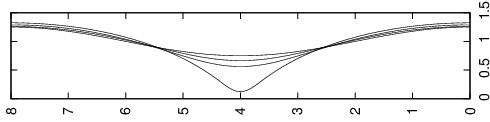} 
\includegraphics[angle=-90,width=0.59\textwidth]{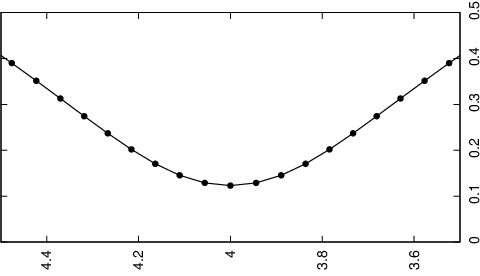} 
}
\end{minipage} \quad
\begin{minipage}{0.28\textwidth}
\includegraphics[angle=-90,width=0.95\textwidth]{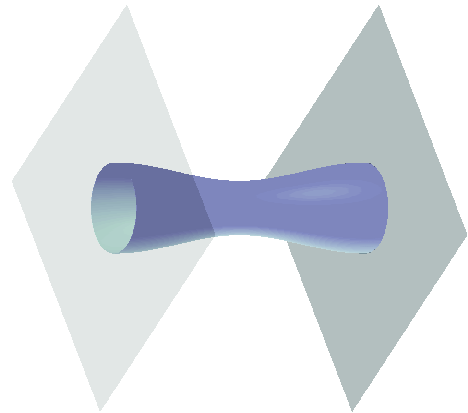} 
\includegraphics[angle=-90,width=0.95\textwidth]{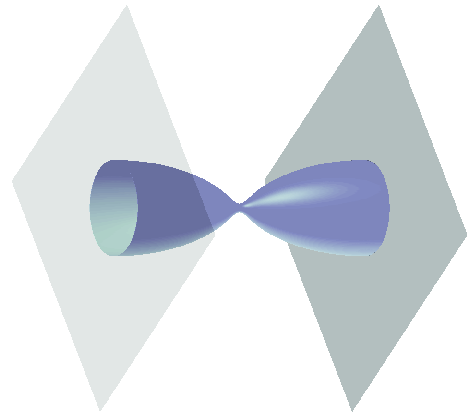} 
\end{minipage} \quad
\begin{minipage}{0.3\textwidth}
\includegraphics[angle=-90,width=0.95\textwidth]{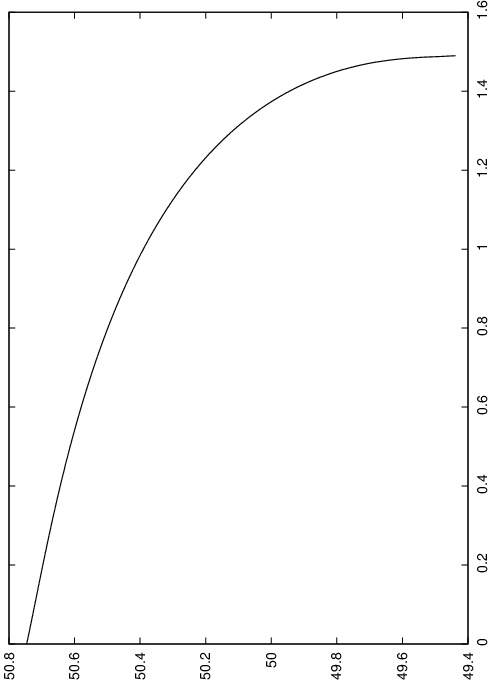} 
\includegraphics[angle=-90,width=0.95\textwidth]{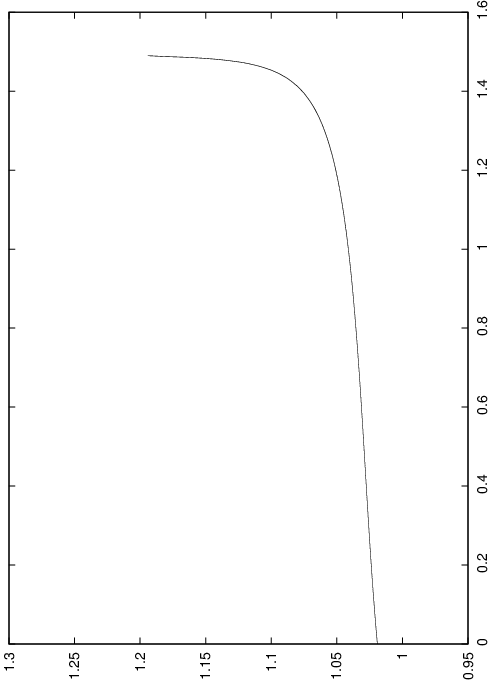}
\includegraphics[angle=-90,width=0.95\textwidth]{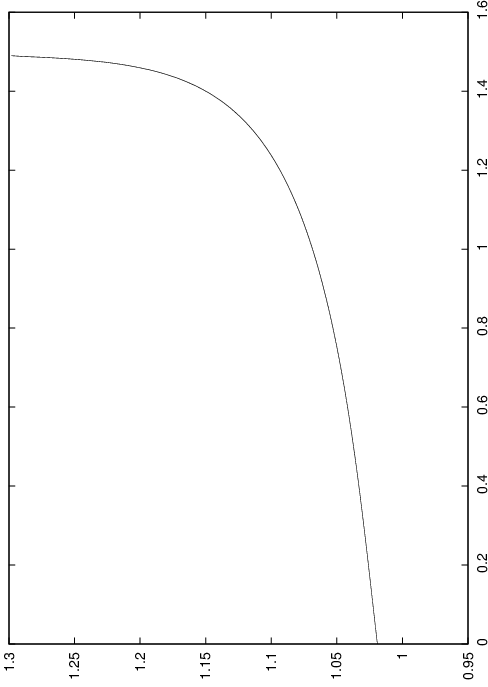}
\end{minipage}
\caption{
$(\BGNsd_m)^h$ [$\partial_2 I = \partial I = \{0,1\}$, 
$\sliprho^{(0)} = \sliprho^{(1)} = 0$]
Evolution for an open dumbbell-like cylindrical shape 
attached to $\bR \times \{ 0 \} \times \bR$
and $\bR \times \{ 8 \} \times \bR$. 
Solution at times $t=0,0.5,1,1.49$, and a detail of the vertex
distribution at time $t=1.49$.
We also visualize the axisymmetric surface $\mathcal{S}^m$ generated by
$\Gamma^m$ at times $t=0$ (above) and $t=1.49$ (below).
On the right are plots of the discrete energy and the ratio 
(\ref{eq:ratio}) and, as a comparison, a plot of the ratio $\ratio^m$ for the 
scheme $(\BGNsdstab_{m,\star})$.}
\label{fig:clust3dfig22}
\end{figure}%
We note that for the smaller aspect ratio of the shape in
Figure~\ref{fig:clust3dfig21}, the evolution reaches a cylinder. For the larger
aspect ratio in Figure~\ref{fig:clust3dfig22} the surface would like to undergo
pinch-off, which represents a singularity in the parametric approach. 
As a
consequence, the element ratio (\ref{eq:ratio}) increases to about $1.19$
for scheme $(\BGNsd_m)^h$, and to about $1.30$ for scheme 
$(\BGNsdstab_{m,\star})$.

\subsection{Numerical results for the intermediate evolution law} 
\label{sec:intnr}

We repeat the experiment in Figure~\ref{fig:sdcigar711} for the scheme
$(\BGNintstab_{m,\star})$ to approximate the flow (\ref{eq:SALK}), rather than
surface diffusion. We choose the values $\xi = \alpha = 1$, so that the flow
interpolates between surface diffusion and conserved mean curvature flow.
The results are shown in Figure~\ref{fig:salkcigar711}, where we note the
slower evolution compared to Figure~\ref{fig:sdcigar711}.
The discretization parameters are $J=128$ and $\ttau = 10^{-4}$.
The relative volume loss for this experiment is $0.00\%$.
\begin{figure}
\center
\begin{minipage}{0.35\textwidth}
\includegraphics[angle=-90,width=0.3\textwidth]{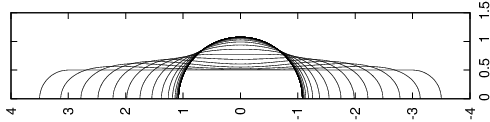} \quad
\includegraphics[angle=-90,width=0.4\textwidth]{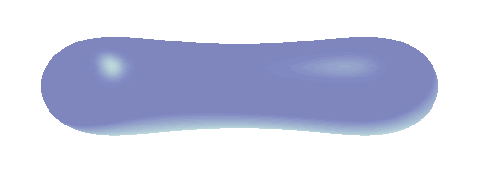}
\end{minipage} \qquad
\begin{minipage}{0.3\textwidth}
\includegraphics[angle=-90,width=0.95\textwidth]{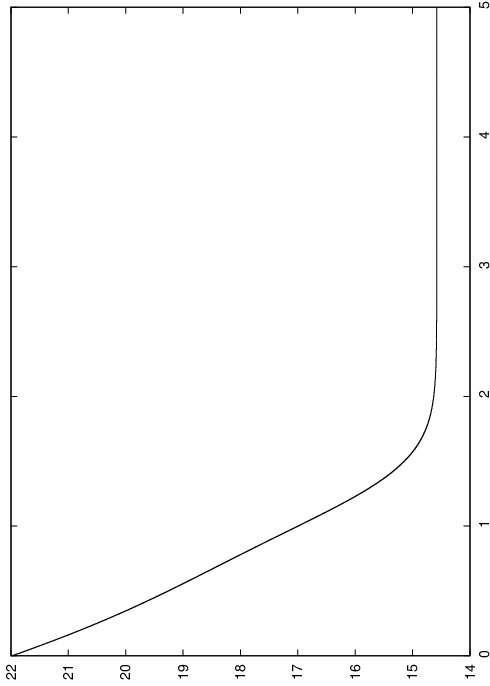} \\
\includegraphics[angle=-90,width=0.95\textwidth]{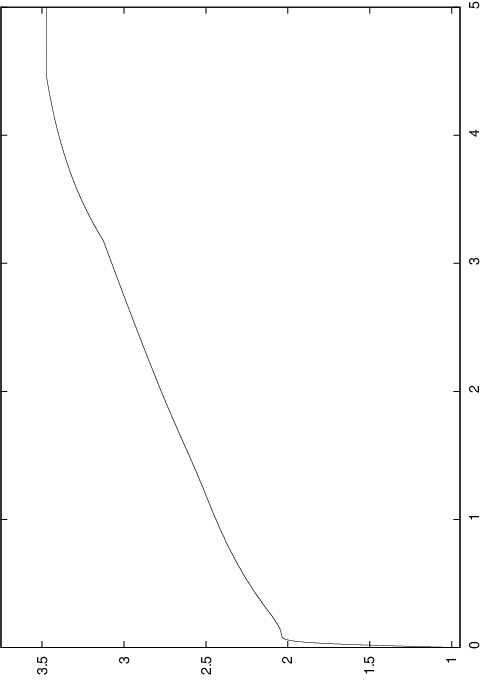}
\end{minipage}
\caption{
$(\BGNintstab_{m,\star})$
Evolution for a rounded cylinder of dimension $1\times7\times1$. 
Plots are at times $t=0,0.2,\ldots,5$. 
We also visualize the axisymmetric surface $\mathcal{S}^m$ generated by
$\Gamma^m$ at time $t=0.4$.
On the right are plots of the discrete energy and the ratio $\ratio^m$.
}
\label{fig:salkcigar711}
\end{figure}%
We mention that for the fully 3d approximation \cite[(2.27a--c)]{gflows3d} 
of the intermediate flow (\ref{eq:SALK}), some transient mesh ringing was
observed for a numerical simulation similar to Figure~\ref{fig:salkcigar711},
see \cite[Fig.\ 17]{gflows3d}.
Of course, in the axisymmetric setting considered in this paper, no such mesh
effects can ever occur.

\subsection{Numerical results for Willmore flow} \label{sec:wfnr}
Here present numerical results for the scheme $(\BGNwf_m)^h$,
recall (\ref{eq:bgnfd}).
As the fully discrete energy, we consider
\begin{equation} \label{eq:bgnWm}
W^h(\pol X^m) = \pi\left( \pol X^m\,.\,\pol\ek_1, 
\left(\kappa^{m+1} - \spont - \doctorkappa^{m}(\kappa^{m+1})\right)^2 
|\pol X^m_\rho| \right)^h\,.
\end{equation}
On recalling (\ref{eq:varkappa}), and given $\Gamma^0 = \pol X^0(\overline I)$,
we define the initial data $\kappa^0 \in V^h$ via
$\kappa^0 = \pi^h\left[\frac{\pol\kappa^0\,.\,\pol\omega^0}{|\pol\omega^0|}
\right]$,
where $\pol\kappa^0\in \Vh$ is such that
\begin{equation*} %\label{eq:veckappa0}
\left( \pol\kappa^{0},\pol\eta\, |\pol X^0_\rho| \right)^h
+ \left( \pol{X}^{0}_\rho , \pol\eta_\rho\,|\pol X^0_\rho|^{-1} \right)
 = 0 \qquad \forall\ \pol\eta \in \Vh\,.
\end{equation*}
Unless otherwise stated, we set $\spont=0$.

\subsubsection{Sphere}

We note that a sphere of radius $r(t)$, where $r(t)$ satisfies
\begin{equation} \label{eq:ODE}
r'(t) = - \tfrac\spont{r(t)}\,(\tfrac2{r(t)} + \spont)\,,
\quad r(0) = r_0 \in \bRplus\,,
\end{equation}
is a solution to (\ref{eq:Willmore_flow}).
The nonlinear ODE (\ref{eq:ODE}), in the case $\spont\not=0$, is solved by 
$r(t) = z(t) - \tfrac2\spont$, where $z(t)$ is such that 
$\tfrac12\,(z^2(t) - z_0^2) - \tfrac4\spont\,(z(t)-z_0) + \tfrac4{\spont^2}\,
\ln \tfrac{z(t)}{z_0} + \spont^2\,t = 0$,
with $z_0 = r_0 + \tfrac2\spont$.

We use the true solution (\ref{eq:ODE}) for a convergence experiment for the 
scheme $(\BGNwf_m)^h$. Here
we start with a nonuniform partitioning of a semicircle of radius 
$r(0)=r_0=1$ and compute the flow for $\spont = -1$ until time $T = 1$. 
In particular, we have $\partial_0 I = \partial I = \{0,1\}$ and we choose
$\pol X^0 \in \Vhpartialzero$ with
\begin{equation*} %\label{eq:X0}
\pol X^0(q_j) = r_0 \begin{pmatrix} 
\cos[(q_j-\tfrac12)\,\pi + 0.1\,\cos((q_j-\tfrac12)\,\pi)] \\
\sin[(q_j-\tfrac12)\,\pi + 0.1\,\cos((q_j-\tfrac12)\,\pi)]
\end{pmatrix}, \quad j = 0,\ldots,J\,,
\end{equation*}
recall (\ref{eq:Jequi}). We compute the error
$\errorXx = \max_{m=1,\ldots,M} \max_{j=0,\ldots,J} 
| |\pol X^m(q_j)| - r(t_m)|$ over the time interval $[0,T]$ between
the true solution and the discrete solutions for the scheme $(\BGNwf_m)^h$.
Here we use the time step size $\ttau=0.1\,h^2_{\Gamma^0}$,
where $h_{\Gamma^0}$ is the maximal edge length of $\Gamma^0$.
The computed errors are reported in Table~\ref{tab:bgnspont-1},
where we observe a convergence rate of $\mathcal{O}(h^2_{\Gamma^0})$.
\begin{table}
\center
\caption{$(\BGNwf_m)^h$
Errors for the convergence test (\ref{eq:ODE}) with $\spont = -1$.}
\begin{tabular}{rrcccc}
\hline
$J$ & $h_{\Gamma^0}$ & $\errorXx$ & EOC  \\ \hline
32  & 1.0792e-01 & 1.9659e-03 & ---      \\
64  & 5.3988e-02 & 5.1262e-04 & 1.940681 \\         
128 & 2.6997e-02 & 1.2980e-04 & 1.981917 \\
256 & 1.3499e-02 & 3.2571e-05 & 1.994737 \\
512 & 6.7495e-03 & 8.1512e-06 & 1.998504 \\
\hline
\end{tabular}
\label{tab:bgnspont-1}
\end{table}%

\subsubsection{Genus 0 surface}
The evolution for Willmore flow for the same
initial data as in Figure~\ref{fig:sdflatcigar}
is shown in Figure~\ref{fig:bgnflatcigar}.
The discretization parameters for the scheme $(\BGNwf_m)^h$ 
are $J=128$ and $\ttau = 10^{-3}$. As expected, the flat disc evolves to a
sphere. At time $t=10$ the discrete Willmore energy (\ref{eq:bgnWm}) is
$25.330$, and continuing the evolution until time $t=100$ yields an energy
of $25.131$. 
This compares well with the value $8\,\pi = 25.133$, 
which is the Willmore energy (\ref{eq:W}), for $\spont=0$, of a sphere.
\begin{figure}
\center
\begin{minipage}{0.65\textwidth}
\includegraphics[angle=-90,width=0.5\textwidth]{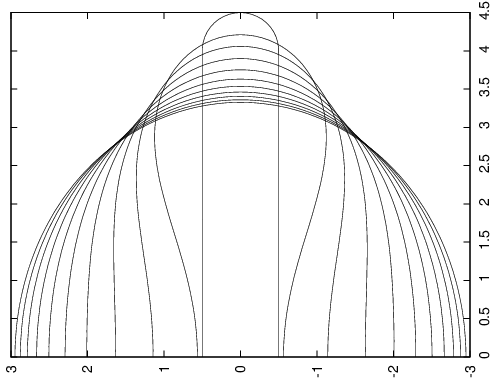} \quad
\includegraphics[angle=-90,width=0.45\textwidth]{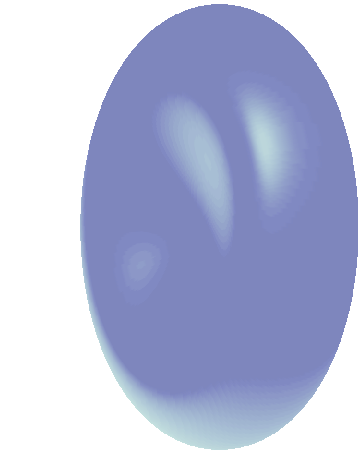} 
\end{minipage}
\begin{minipage}{0.3\textwidth}
\includegraphics[angle=-90,width=0.95\textwidth]{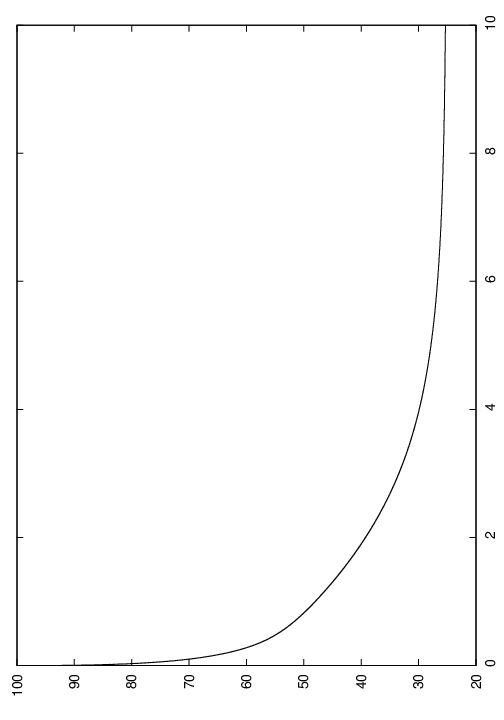}
\includegraphics[angle=-90,width=0.95\textwidth]{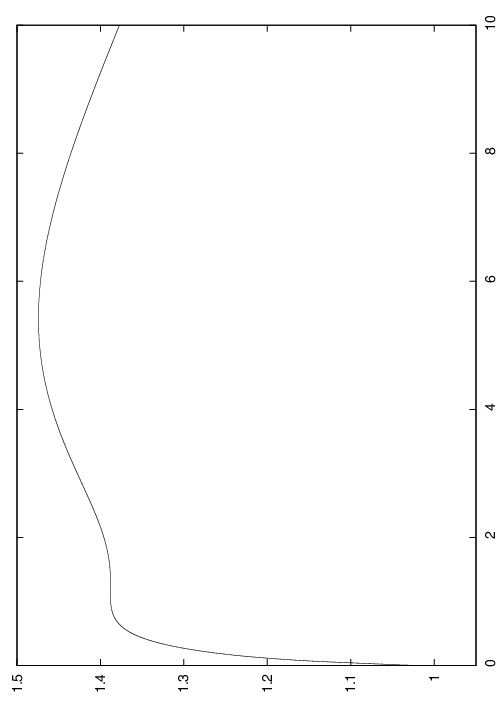}
\end{minipage}
\caption{
$(\BGNwf_m)^h$
Willmore flow for a disc of dimension $9\times1\times9$. 
Solution at times $t=0,1,\ldots,10$.
We also visualize the axisymmetric surface $\mathcal{S}^m$ generated by
$\Gamma^m$ at time $t=1$.
On the right a plot of the discrete energy and of the ratio (\ref{eq:ratio}).} 
\label{fig:bgnflatcigar}
\end{figure}%
Repeating the simulation with $\spont=-2$ yields the results in
Figure~\ref{fig:bgnflatcigarspont-2}, where we observe that the final steady
state now approximates the unit sphere. In fact, the discrete energy
(\ref{eq:bgnWm}) at time $t=3$ is $1.8 \times 10^{-5}$,
which compares with the energy (\ref{eq:W}), for $\spont=-2$, being zero for a
unit sphere.
\begin{figure}
\center
\begin{minipage}{0.65\textwidth}
\includegraphics[angle=-90,width=0.6\textwidth]{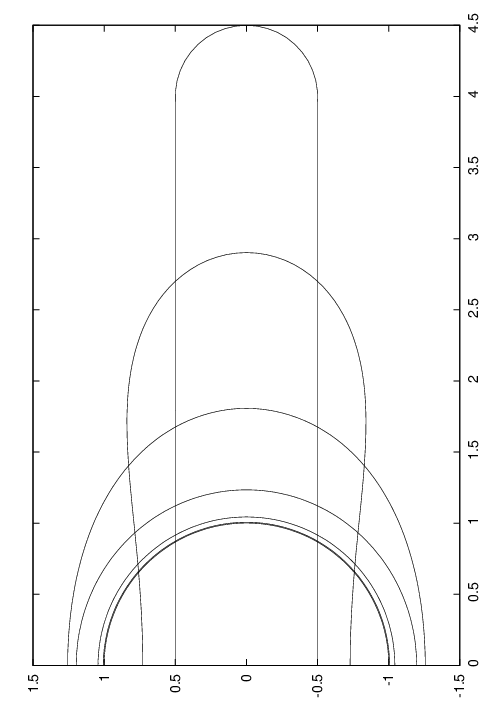} 
\includegraphics[angle=-90,width=0.39\textwidth]{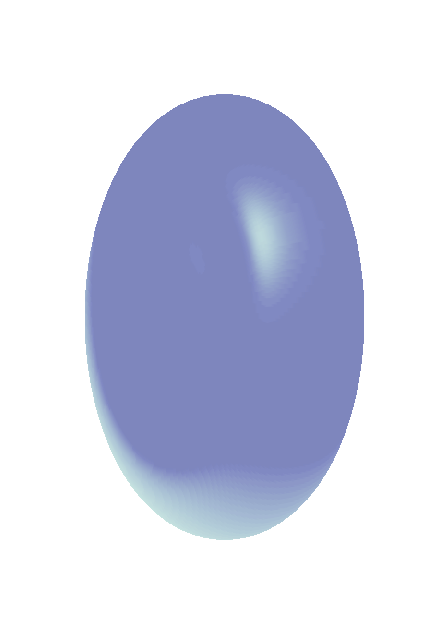} 
\end{minipage}
\begin{minipage}{0.3\textwidth}
\includegraphics[angle=-90,width=0.95\textwidth]{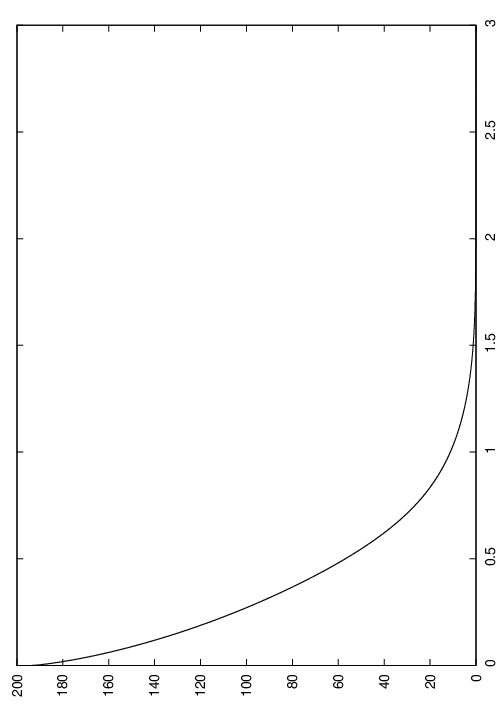}
\includegraphics[angle=-90,width=0.95\textwidth]{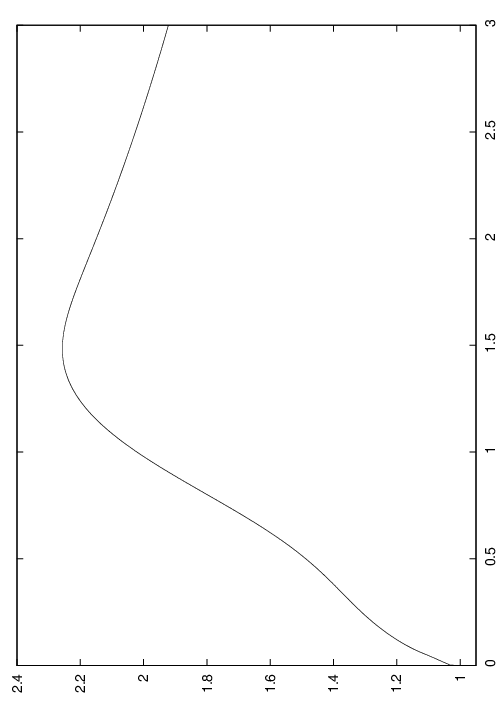}
\end{minipage}
\caption{
$(\BGNwf_m)^h$
Willmore flow with $\spont=-2$ for a disc of dimension $9\times1\times9$. 
Solution at times $t=0,0.5,\ldots,3$.
We also visualize the axisymmetric surface $\mathcal{S}^m$ generated by
$\Gamma^m$ at time $t=0.5$.
On the right a plot of the discrete energy and of the ratio (\ref{eq:ratio}).} 
\label{fig:bgnflatcigarspont-2}
\end{figure}%
We also repeat the computation in \cite[Fig.\ 9]{pwfade} for a rounded
cylinder of total dimension $2\times6\times2$, 
see Figure~\ref{fig:bgnpwfade9}. Here the surface would like
to pinch off into two unit spheres.
The discretization parameters are $J=128$ and $\ttau = 10^{-3}$.
We note that at time $t=1$, the ratio $\ratio^m$ has reached a value of
$1.14$. Hence, despite the proximity to the $x_2$--axis, the vertices are
still nearly equidistributed.
\begin{figure}
\center
\begin{minipage}{0.6\textwidth}
\includegraphics[angle=-90,width=0.2\textwidth]{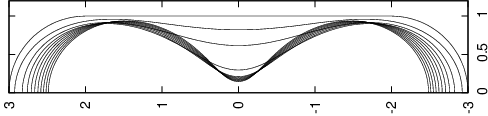} 
\quad
\includegraphics[angle=-90,width=0.35\textwidth]{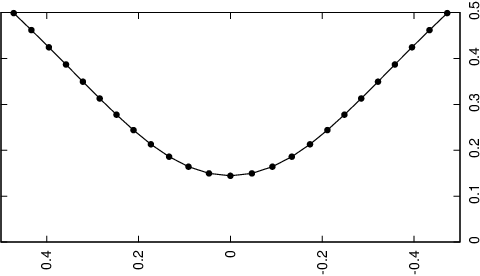} 
\qquad
\includegraphics[angle=-90,width=0.3\textwidth]{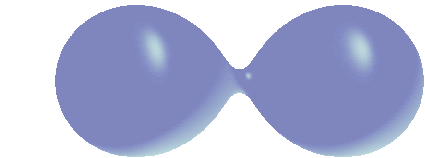} 
\end{minipage} \qquad
\begin{minipage}{0.3\textwidth}
\includegraphics[angle=-90,width=0.9\textwidth]{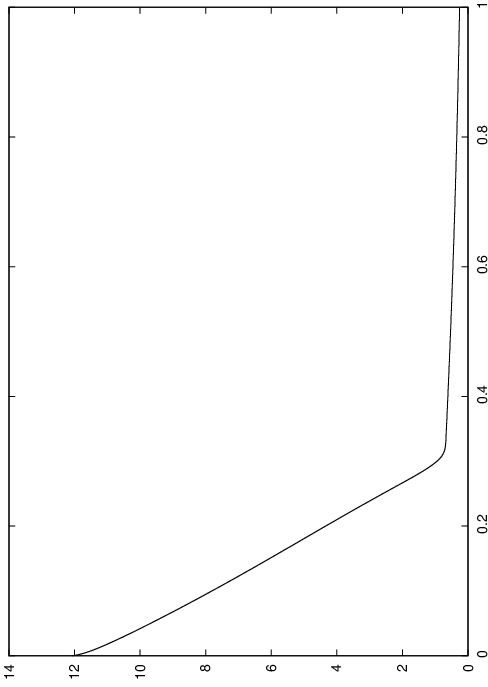}
\includegraphics[angle=-90,width=0.9\textwidth]{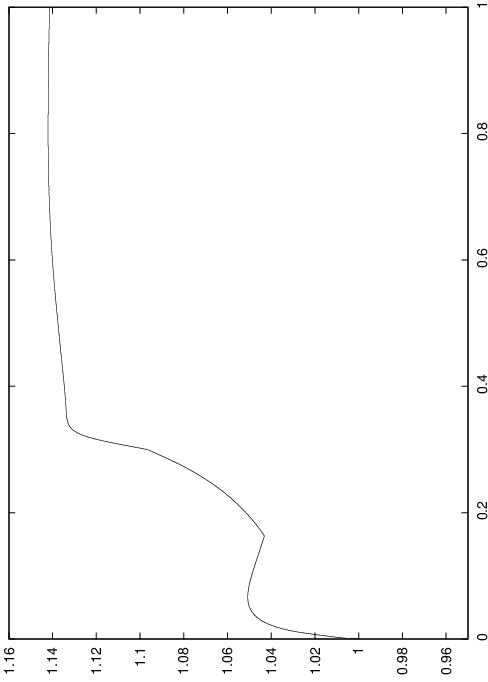}
\end{minipage}
\caption{
$(\BGNwf_m)^h$
Willmore flow with $\spont=-2$ for a rounded
cylinder of dimension $2\times6\times2$. 
Solution at times $t=0,0.1,\ldots,1$, and a detail of the vertex
distribution at time $t=1$.
We also visualize the axisymmetric surface $\mathcal{S}^m$ generated by
$\Gamma^m$ at time $t=1$.
On the right are plots of the discrete energy and of the ratio
(\ref{eq:ratio}).} 
\label{fig:bgnpwfade9}
\end{figure}%

\subsubsection{Genus 1 surface}
Using as initial data for Willmore flow the surface generated by the
curve $\Gamma(0)$ that is given by an elongated cigar-like shape 
of total dimension $4\times1$, with barycentre $(4,0)^T\in\bR^2$, we observe
the numerical evolution shown in Figure~\ref{fig:bgncigar41}.
The discretization parameters are $J=128$ and $\ttau = 10^{-3}$.
The observed final radius of $\Gamma^m$ is $2.11$, with the centre of the
circle at $(3.06,0)$. Hence the ratio of the two radii of the torus is
$R/r = 3.06/2.11 = 1.4488$, which will tend to $\sqrt{2}$ as the evolution
continues further. In fact, continuing the evolution until time $t=10$ yields
a ratio $R/r = 3.03 / 2.15 = 1.4140$ and a discrete energy (\ref{eq:bgnWm}) 
of $39.484$. Here we recall that the ratio $\sqrt{2}$ characterizes the 
Clifford torus, the known minimizer of the Willmore energy (\ref{eq:W}), with
$\spont=0$, among all genus $1$ surfaces, see \cite{MarquesN14}, with Willmore
energy equal to $4\,\pi^2 = 39.478$.
\begin{figure}
\center
\begin{minipage}{0.65\textwidth}
\includegraphics[angle=-90,width=0.60\textwidth]{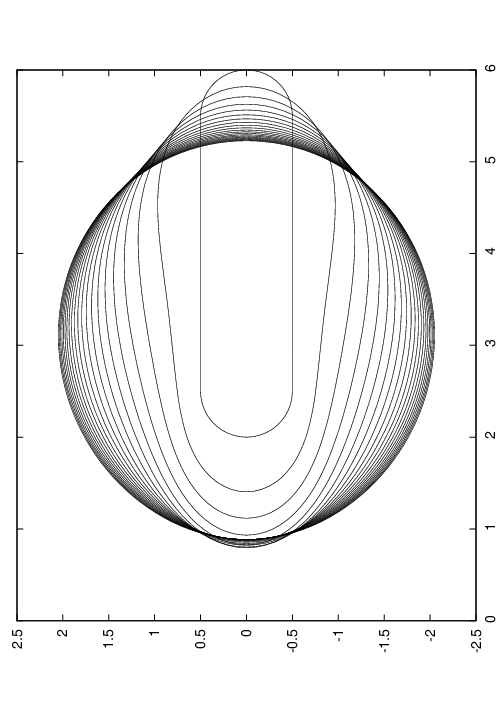}
\includegraphics[angle=-90,width=0.39\textwidth]{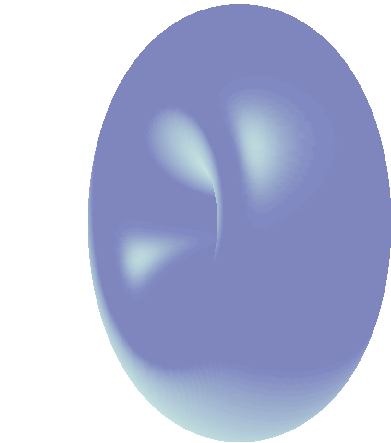}
\end{minipage}
\begin{minipage}{0.3\textwidth}
\includegraphics[angle=-90,width=0.95\textwidth]{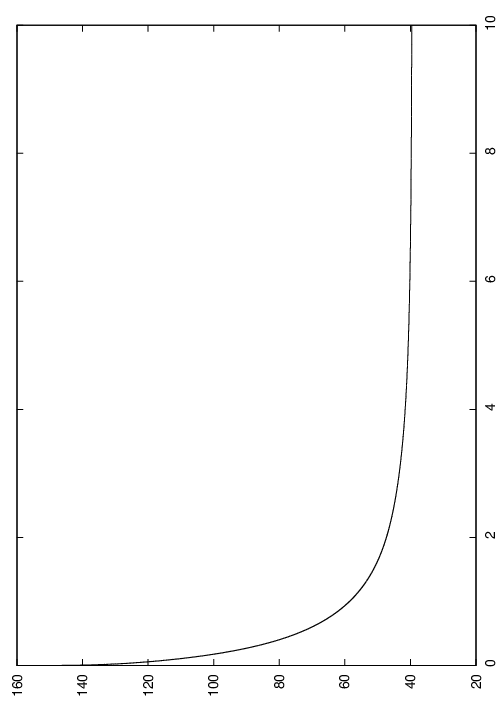}
\includegraphics[angle=-90,width=0.95\textwidth]{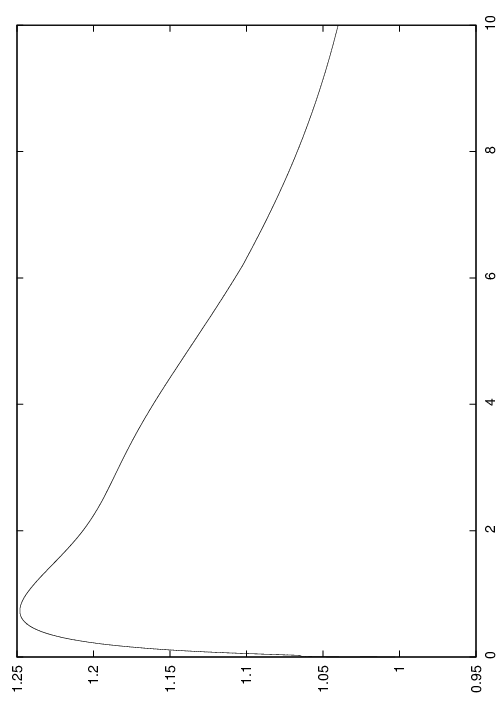}
\end{minipage}
\caption{
$(\BGNwf_m)^h$
Willmore flow towards a Clifford torus. 
Solution at times $t=0,0.5,\ldots,10$.
We also visualize the axisymmetric surface $\mathcal{S}^m$ generated by
$\Gamma^m$ at time $t=10$.
On the right a plot of the discrete energy and of the ratio (\ref{eq:ratio}).
}
\label{fig:bgncigar41}
\end{figure}%
Repeating the simulation in Figure~\ref{fig:bgncigar41} with $\spont=-2$
gives the results in Figure~\ref{fig:bgncigar41spont-2}. 
The observed final radius of $\Gamma^m$ is $0.498$, with the centre of the
circle at $(4.26,0)$. Hence the ratio of the two radii of the torus is now
$R/r = 4.06/0.498 = 8.15$.
\begin{figure}
\center
\includegraphics[angle=-90,width=0.45\textwidth]{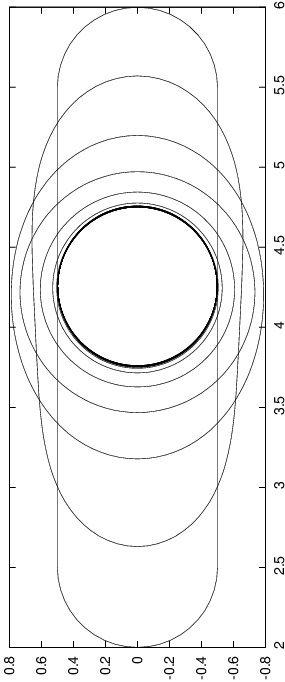}
\qquad
\includegraphics[angle=-90,width=0.25\textwidth]{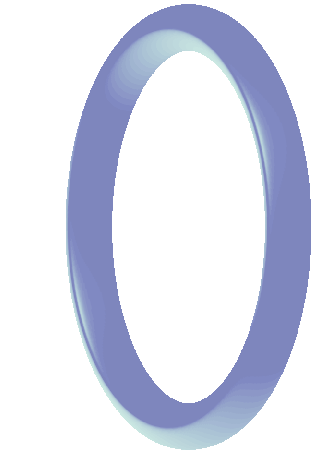} 
\\[2mm]
\includegraphics[angle=-90,width=0.3\textwidth]{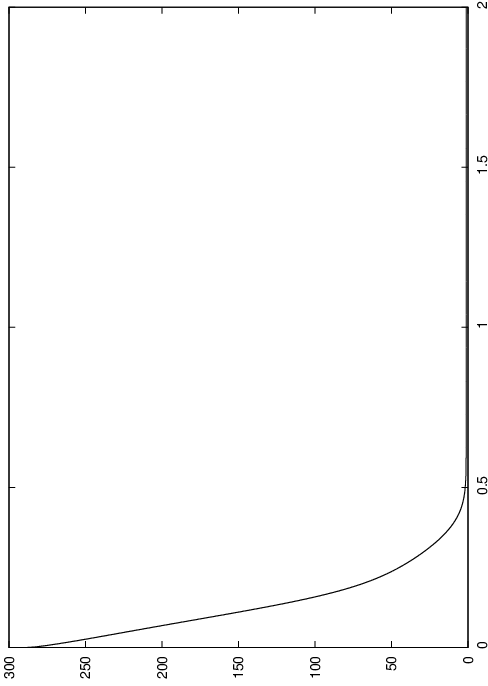}
\includegraphics[angle=-90,width=0.3\textwidth]{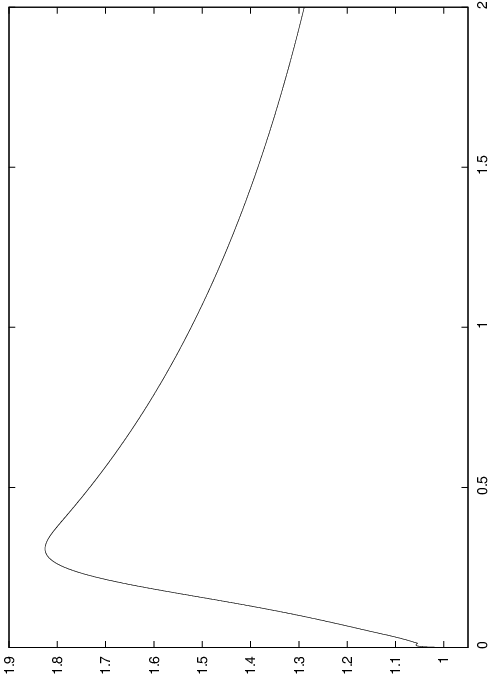}
\caption{
$(\BGNwf_m)^h$
Willmore flow with $\spont=-2$ towards a torus. 
Solution at times $t=0,0.1,\ldots,2$.
We also visualize the axisymmetric surface $\mathcal{S}^m$ generated by
$\Gamma^m$ at time $t=2$.
Below a plot of the discrete energy and of the ratio (\ref{eq:ratio}).} 
\label{fig:bgncigar41spont-2}
\end{figure}%

In order to study the development of a singularity under Willmore flow,
we consider the evolution from \cite[Figs.\ 8, 9]{MayerS02}.
In particular, as initial data for the scheme $(\BGNwf_m)^h$ 
we choose a curve that is the union of a circle of radius $0.5$, and two
quarter circles of radius $2$, see Figure~\ref{fig:limacon}.
The discretization parameters are $J=1024$ and $\ttau = 4\times10^{-5}$.
It can be seen from the numerical results shown in 
Figure~\ref{fig:limacon} that the scheme $(\BGNwf_m)^h$ 
computes an evolution of a shape with a loop with large curvature and two
circular segments that increase in size. We conjecture that as $t\to\infty$,
upon rescaling to a shape of fixed diameter, the surface approaches two
touching spheres. This would resemble a singularity for Willmore flow.
We note that the existence of surfaces that
become singular under Willmore flow was proven in \cite{Blatt09}. 
More precisely, it was shown that either
a finite time singularity occurs, or that a rescaled infinite time solution
becomes singular for large times. It is stated in \cite[p.\ 408]{Blatt09}
that ``either a small quantum of the curvature concentrates or the diameter 
of the surface does not stay bounded under the Willmore flow''. 
Our simulations indicate that the latter can happen and in contrast to
\cite{MayerS02} we did not found any indication for a finite time singularity.
Here we remark that the authors in \cite[Fig.\ 8]{MayerS02},
who also exploit an
additional symmetry and only compute the evolution for half the generating 
curve, appear to have
performed a topological change to yield two touching spheres at a finite time.
Given our numerical results we believe that this heuristical topological change
was not justified, and the simulation should have been continued normally.
Repeating the simulation in Figure~\ref{fig:limacon}
for $J=2048$ and $\ttau=10^{-5}$ until time $t=100$ yields 
very good agreement between the shapes of the curves for our two experiments,
and so we are satisfied that the evolution shown in Figure~\ref{fig:limacon}
approximates Willmore flow of the initial data.
We remark that the discrete energy (\ref{eq:bgnWm}) at time $t=1000$ 
for the run in Figure~\ref{fig:limacon} is
$50.739$, with the Willmore energy, (\ref{eq:W}) for $\spont=0$, for two
touching spheres being equal to $16\,\pi = 50.265$. 
Finally, in order to better understand the long-time behaviour of the
``radius'' of the two approximate expanding spheres, we plot in 
Figure~\ref{fig:limacon_radius} the quantities
$\max_{\overline I} \pol X^m\,.\,\pol\ek_1$ and
$\frac14\,(\max_{\overline I} \pol X^m\,.\,\pol\ek_2 - \min_{\overline I} 
\pol X^m\,.\,\pol\ek_2)$ over time. We fit both curves to a function of the
form $f(t) = a\,t^p$. For the former curve, we obtain a value $p=0.222$,
while for the second curve we obtain the power $p=0.232$.
\begin{figure}
\center
\begin{minipage}{0.4\textwidth}
\includegraphics[angle=-90,width=0.3\textwidth]{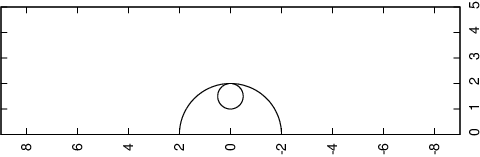}
\includegraphics[angle=-90,width=0.3\textwidth]{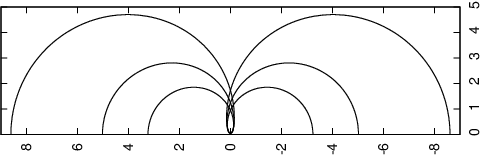} 
\includegraphics[angle=-90,width=0.3\textwidth]{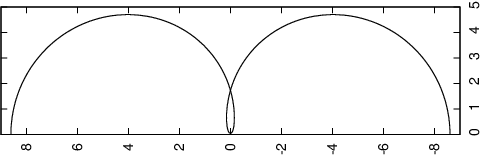}
\end{minipage}
\begin{minipage}{0.5\textwidth}
\includegraphics[angle=-90,width=0.45\textwidth]{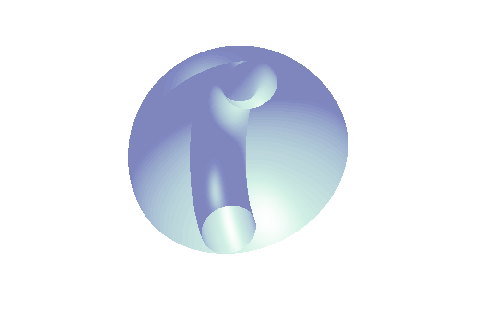}
\includegraphics[angle=-90,width=0.45\textwidth]{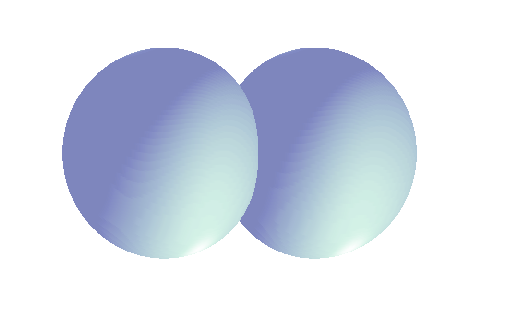} 
\end{minipage}
\includegraphics[angle=-90,width=0.3\textwidth]{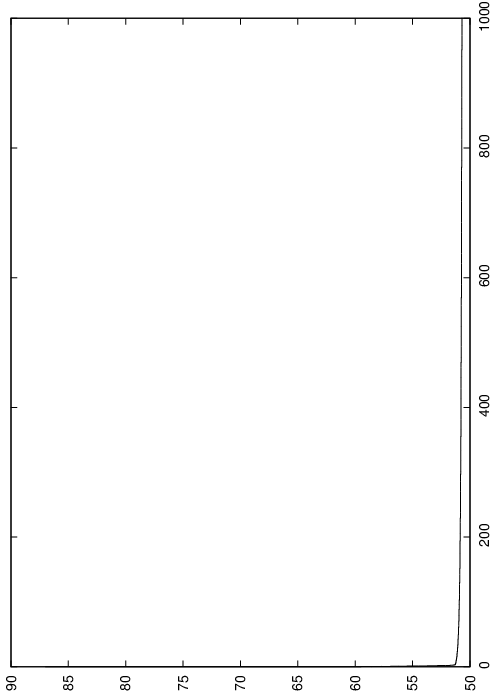}
\includegraphics[angle=-90,width=0.3\textwidth]{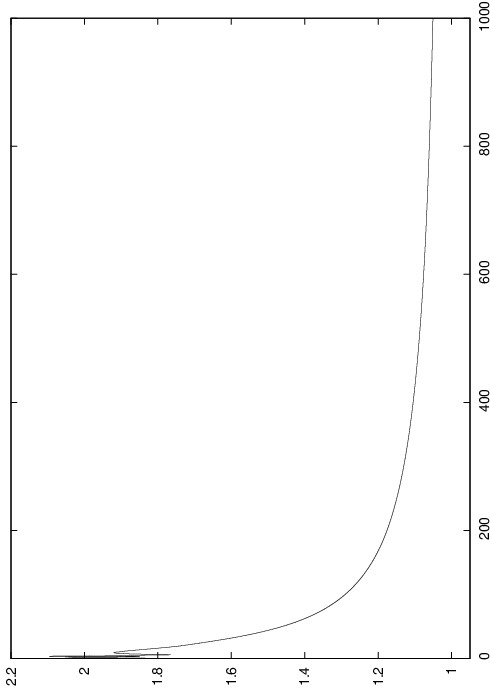}
\caption{
$(\BGNwf_m)^h$
Willmore flow towards two touching spheres. 
In the first three plots we show the initial data, the 
solution at times $t=10,100,1000$, and again at time $t=1000$.
We also visualize parts of the axisymmetric surface $\mathcal{S}^m$ 
generated by $\Gamma^m$ at time $t=0$ and at time $t=10$.
Below a plot of the discrete energy and of the ratio (\ref{eq:ratio}).} 
\label{fig:limacon}
\end{figure}%
\begin{figure}
\center
\includegraphics[angle=-90,width=0.4\textwidth]{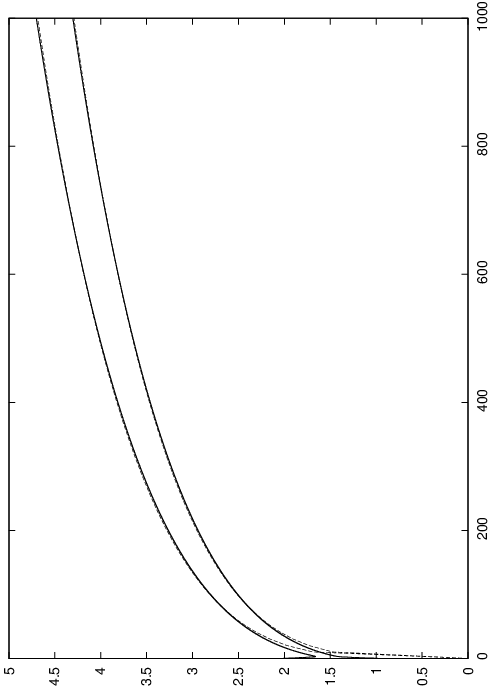}
\caption{
A plot of $\max_{\overline I} \pol X^m\,.\,\pol\ek_1$ (upper graph) and
$\frac14\,(\max_{\overline I} \pol X^m\,.\,\pol\ek_2 - \min_{\overline I} 
\pol X^m\,.\,\pol\ek_2)$ (lower graph) over time, for the simulation in
Figure~\ref{fig:limacon}, together with the functions
$f_i(t) = a_i\,t^{p_i}$, $i = 1,2$, with $(a_1,p_1) = (1.013, 0.222)$
and $(a_2,p_2) = (0.863, 0.232)$.}
\label{fig:limacon_radius}
\end{figure}%

\subsection{Numerical results for Helfrich flow} \label{sec:hfnr}
Here we present some simulations for the scheme
$(\BGNwf_m^{A,V})^h$, recall (\ref{eq:bgnwffd}).

\subsubsection{Genus 0 surface}
We repeat the computation in \cite[Fig.\ 6]{pwfade}
for Helfrich flow with $\spont=0$
of a rounded cylinder of total dimension $1\times4\times1$.
The discretization parameters are $J=128$ and $\ttau = 10^{-3}$.
We observe relative surface area and volume losses of $0.00\%$, 
and we obtain the evolution in Figure~\ref{fig:bgnpwfade6new} towards a mild
dumbbell-like shape.
\begin{figure}
\center
\begin{minipage}{0.35\textwidth}
\includegraphics[angle=-90,width=0.3\textwidth]{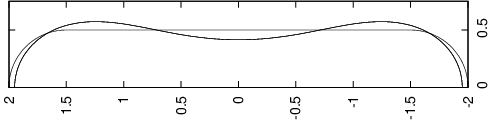}
\qquad
\includegraphics[angle=-90,width=0.35\textwidth]{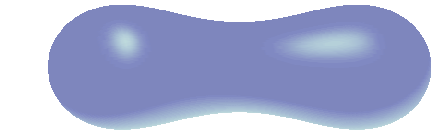} 
\end{minipage}
\begin{minipage}{0.3\textwidth}
\includegraphics[angle=-90,width=0.9\textwidth]{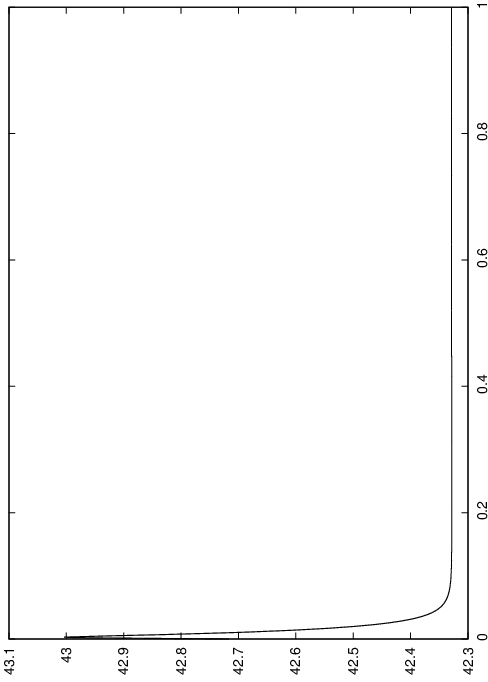}
\includegraphics[angle=-90,width=0.9\textwidth]{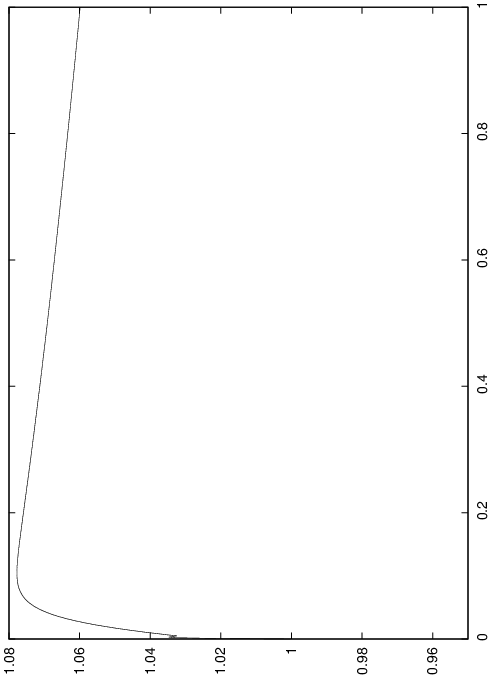}
\end{minipage}
\caption{$(\BGNwf_m^{A,V})^h$
Helfrich flow for $\spont=0$ for a rounded cylinder of dimension
$1\times4\times1$. Solution at times $t=0,0.5,1$.
We also visualize the axisymmetric surface $\mathcal{S}^m$ generated by
$\Gamma^m$ at time $t=1$.
On the right are plots of the discrete energy and of the ratio 
(\ref{eq:ratio}).} 
\label{fig:bgnpwfade6new}
\end{figure}%

\subsubsection{Genus 1 surface}

Repeating the experiment in Figure~\ref{fig:bgncigar41} for Helfrich flow,
until the earlier time of $T = 0.5$, we observe
a relative surface area loss of $0.12\%$ and a relative volume loss of
$0.00\%$. 
The evolution is shown in Figure~\ref{fig:bgncigar41_conv},
where we note that the evolution is very different from the one in
Figure~\ref{fig:bgncigar41}. In particular, the toroidal surface would like to
undergo a change of topology, and close the hole at the origin to become a
genus $0$ surface.
\begin{figure}
\center
\includegraphics[angle=-90,width=0.5\textwidth]{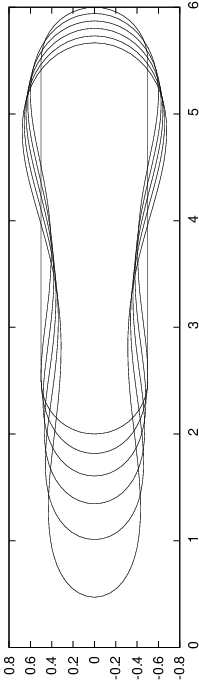} 
\quad
\includegraphics[angle=-90,width=0.25\textwidth]{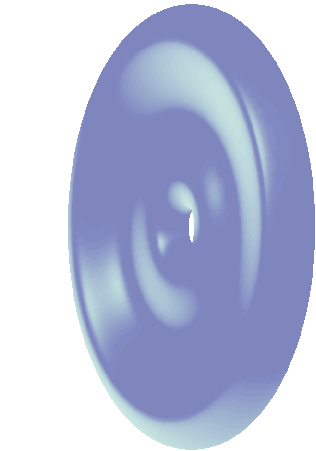} 
\\[2mm]
\includegraphics[angle=-90,width=0.3\textwidth]{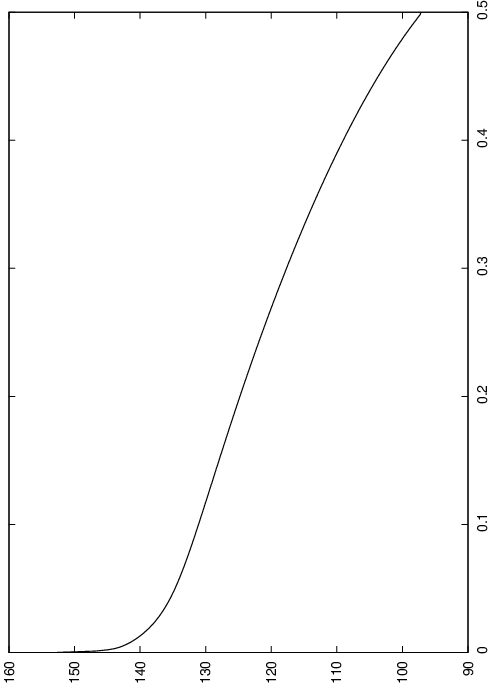}
\includegraphics[angle=-90,width=0.3\textwidth]{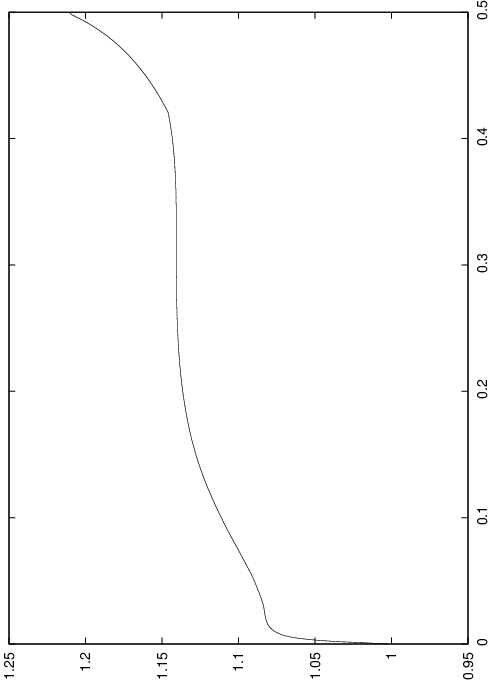}
\caption{$(\BGNwf_m^{A,V})^h$
Helfrich flow for a toroidal surface. Solution at times $t=0,0.1,\ldots,0.5$.
We also visualize the axisymmetric surface $\mathcal{S}^m$ generated by
$\Gamma^m$ at time $t=0.5$.
Below a plot of the discrete energy and of the ratio (\ref{eq:ratio}).} 
\label{fig:bgncigar41_conv}
\end{figure}%
For the smaller time steps $\ttau = 10^{-4}$ and $\ttau = 10^{-5}$,
the relative surface area loss is reduced to $0.01\%$ and $0.00\%$,
respectively, while the relative volume losses remain zero to the displayed
number of digits.

\section*{Conclusions}
We have derived and analysed various numerical schemes for the parametric
approximation of surface diffusion,
an intermediate flow between surface diffusion and conserved mean curvature
flow, Willmore flow and Helfrich flow.

As regards surface diffusion, we propose a choice between two practical and
robust schemes.
A very practical linear scheme is given by
$(\BGNsd_m)^h$. In practice the scheme is stable, and it 
asymptotically distributes the vertices uniformly.
A nonlinear scheme, for which an unconditional stability bound can be shown, is
given by $(\BGNsdstab_{m,\star})$. 
The nonlinearity in $(\BGNsdstab_{m,\star})$ is only very mild,
and so a Newton solver never takes more than three iterations in practice.
Moreover, coalescence of vertices does not occur in practice, and the ratio of
largest element/smallest element appears to asymptotically approach some
value that is significantly larger than $1$, but smaller than $10$.
Similarly to $(\BGNsdstab_{m,\star})$, we presented the scheme
$(\BGNintstab_{m,\star})$ for the approximation of the intermediate flow. Once
again, the scheme is unconditionally stable and can be easily solved for with 
a Newton method

Lastly, for Willmore flow and Helfrich flow we propose the fully practical
linear schemes $(\BGNwf_m)^h$ and $(\BGNwf_m^{A,V})^h$, respectively. Like
the scheme $(\BGNsd_m)^h$, 
they also enjoy an asymptotic equidistribution property.

\renewcommand{\theequation}{A.\arabic{equation}}
\setcounter{equation}{0}
\begin{appendix}

\section{Derivation of (\ref{eq:sdbca}) on $\partial_0 I$} \label{sec:A}
Here we demonstrate that (\ref{eq:sdweaka}) and (\ref{eq:sdstaba}) weakly
impose (\ref{eq:sdbca}) on $\partial_0 I$. 
These proofs are an extension of the proof in \cite[Appendix~A]{aximcf},
where it is shown that (\ref{eq:sdstabb}) weakly
imposes (\ref{eq:bc}). 
First we consider (\ref{eq:sdstaba}) and the case 
$\rho_0 = 0 \in \partial_0 I$.

We assume for almost all $t\in(0,T)$ that
$\pol x(t) \in [C^1(\overline I)]^2$, $\varkappa_S(t) \in C^1(\overline I)$
and $\pol x_t(t) \,.\,\pol \nu(t) \in L^\infty(I)$. 
These assumptions and (\ref{eq:xrho}) imply that 
\begin{equation} \label{eq:x1bound}
C_1\,\rho \leq |\pol x(\rho,t)\,.\,\pol\ek_1 | \leq C_2\,\rho
\qquad \forall\ \rho \in [0,\overline\rho]\,,
\end{equation}
for $\overline\rho$ sufficiently small, and for almost all $t\in(0,T)$.

Let $t \in (0,T)$.
For a fixed $\overline\rho > 0$ and $\epsilon \in (0,\overline\rho)$, we define
\begin{equation*} % \label{eq:chieps}
\chi_\epsilon(\rho) = \begin{cases}
(\overline\rho)^{-1}\,\int_\epsilon^{\overline\rho} (\pol
x(z,t)\,.\,\pol\ek_1)^{-1} \;{\rm d}z & 0 \leq \rho < \epsilon\,, \\
(\overline\rho)^{-1}\,\int_\rho^{\overline\rho} (\pol
x(z,t)\,.\,\pol\ek_1)^{-1} \;{\rm d}z & \epsilon \leq \rho < \overline\rho\,, \\
0 & \overline\rho \leq \rho\,.
\end{cases}
\end{equation*}
We observe that (\ref{eq:x1bound}) implies that
$(\pol x\,.\,\pol\ek_1)\,\chi_\epsilon$ is integrable in the limit
$\epsilon \to 0$.
On choosing 
$\chi = \chi_\epsilon \in H^1(I)$ in (\ref{eq:sdstaba}), we obtain in the
limit $\epsilon \to 0$ that
\begin{equation} \label{eq:app1}
(\overline\rho)^{-1}\,
\int_0^{\overline\rho} (\pol x\,.\,\pol\ek_1)\,\pol x_t\,.\,\pol\nu\,
\left(\int_\rho^{\overline\rho} (\pol x\,.\,\pol\ek_1)^{-1} \;{\rm d}z\right)
|\pol x_\rho| \drho
= - (\overline\rho)^{-1}\,\int_0^{\overline\rho} 
(\varkappa_{\mathcal{S}})_\rho\,|\pol x_\rho|^{-1} \drho\,.
\end{equation}
Applying Fubini's theorem and noting (\ref{eq:x1bound}), as
well as the boundedness of $|\pol x_\rho|$ and
$\pol x_t\,.\,\pol\nu$, yields 
that
\begin{equation} \label{eq:fubini}
\left|
(\overline\rho)^{-1}\,
\int_0^{\overline\rho} (\pol x\,.\,\pol\ek_1)\,\pol x_t\,.\,\pol\nu\,
\left(\int_\rho^{\overline\rho} (\pol x\,.\,\pol\ek_1)^{-1} \;{\rm d}z\right)
|\pol x_\rho| \drho \right| %\nonumber \\ & \quad
= \left| (\overline\rho)^{-1}\,
\int_0^{\overline\rho} 
(\pol x\,.\,\pol\ek_1)^{-1}
\left(\int_0^z
(\pol x\,.\,\pol\ek_1)\,\pol x_t\,.\,\pol\nu\,|\pol x_\rho| \drho
\right) {\rm d}z\right| %\nonumber \\ & \quad
\to 0 \quad \text{as }\ \overline\rho\to0\,.
\end{equation}
On the other hand, the right hand side in (\ref{eq:app1}) converges to
$(\varkappa_{\mathcal{S}})_\rho(0,t)\,|\pol x_\rho(0,t)|^{-1}$
as $\overline\rho\to0$, on recalling the smoothness assumptions on
$\varkappa_{\mathcal{S}}$ and $\pol x$. 
Combining this with (\ref{eq:fubini}) and 
(\ref{eq:xrho}) yields the boundary condition 
(\ref{eq:sdbca}) for $\rho = 0 \in \partial_0 I$. The proof for
$\rho = 1 \in \partial_0 I$ is analogous.

The proof for (\ref{eq:sdweaka}) is identical, 
on assuming that $(\varkappa - 
\frac{\pol\nu\,.\,\pol\ek_1}{\pol x\,.\,\pol\ek_1})(t) \in C^1(\overline I)$ 
for almost all $t\in(0,T)$.
Finally we note that the above proof also shows that (\ref{eq:intstaba}) 
weakly imposes (\ref{eq:intbca}) on $\partial_0 I$,
on assuming that $y(t) \in C^1(\overline I)$ for almost all $t\in(0,T)$.

\renewcommand{\theequation}{B.\arabic{equation}}
\setcounter{equation}{0}
\section{Some axisymmetric differential geometry} \label{sec:B}
Let $\pol x : \overline I \to \bR^2$ parameterize $\Gamma$,
the generating curve of a surface $\mathcal{S}$. Then
$\pol y : \overline I \times [0,2\,\pi) \to \bR^3$ parameterizes
$\mathcal{S}$, where
\begin{equation} \label{eq:vecy}
\pol y(\rho,\theta) = (\pol x(\rho)\,.\,\pol\ek_1\,\cos\theta, 
\pol x(\rho)\,.\,\pol\ek_2, \pol x(\rho)\,.\,\pol\ek_1\,\sin\theta)^T 
\,.
\end{equation}
On recalling that $\partial_s = |\pol{x}_\rho|^{-1}\,\partial_\rho$,
we note that 
\begin{equation} \label{eq:vecys}
|\pol y_s|^2 = 1\,,\quad 
|\pol y_\theta|^2 = (\pol x\,.\,\pol\ek_1)^2\,,\quad
\pol y_s\,.\,\pol y_\theta = 0\,.
\end{equation}
In what follows, we often identify a function $f$ defined on
$\overline I \times [0,2\,\pi)$ with the function 
$f \circ \pol y^{-1}$, defined on $\mathcal{S}$. For example, it 
follows from (\ref{eq:vecys}) that
\begin{equation*} % \label{eq:nabsS}
\nabS\,f = f_s\,\pol y_s + (\pol x\,.\,\pol\ek_1)^{-2}\,
f_\theta\,\pol y_\theta\,.
\end{equation*}
Similarly, 
\begin{equation*} % \label{eq:divS}
\nabS\,.\,\pol f = \pol f_s\,.\,\pol y_s + (\pol x\,.\,\pol\ek_1)^{-2}\,
\pol f_\theta\,.\,\pol y_\theta\,,
\end{equation*}
and so, on noting $((\pol x\,.\,\pol\ek_1)^{-1}\,\pol y_\theta)_s=\pol 0$ 
and $(\pol y_s)_\theta \,.\, \pol y_\theta = (\pol x\,.\,\pol\ek_1)\, 
\pol x_s\,.\,\pol\ek_1$, we obtain that
\begin{equation*} % \label{eq:LBS}
\Delta_{\mathcal S}\,f = \nabS\,.\,(\nabS\, f) = f_{ss} + 
\frac{\pol x_s\,.\,\pol\ek_1}{\pol x\,.\,\pol\ek_1}\,f_s + 
(\pol x\,.\,\pol\ek_1)^{-2}\,f_{\theta\theta}\,.
\end{equation*}
For a radially symmetric function $f$, with $f(\rho,\theta) = f(\rho,0)$ 
for all $(\rho,\theta) \in \overline I \times [0,2\,\pi)$, it follows that
\begin{equation} \label{eq:LBSrad}
\Delta_{\mathcal S}\,f 
= (\pol x\,.\,\pol\ek_1)^{-1} \, ( \pol x\,.\,\pol\ek_1\,f_s)_s\,.
\end{equation}

We remark that a derivation of (\ref{eq:kappaS}),
recall also (\ref{eq:LBid}), is obtained by combining
(\ref{eq:vecy}) and (\ref{eq:LBSrad}) to yield, 
on recalling (\ref{eq:varkappa}), (\ref{eq:tau}) and (\ref{eq:nuS}), that 
\begin{align}
\Delta_{\mathcal{S}}\,\pol y & = 
\pol y_{ss} + 
\frac{\pol x_s\,.\,\pol\ek_1}{\pol x\,.\,\pol\ek_1}\,\pol y_s + 
(\pol x\,.\,\pol\ek_1)^{-2}\,\pol y_{\theta\theta}
= 
\begin{pmatrix}
\pol x_{ss}\,.\,\pol\ek_1\,\cos\theta \\
\pol x_{ss}\,.\,\pol\ek_2 \\
\pol x_{ss}\,.\,\pol\ek_1\,\sin\theta 
\end{pmatrix}
+ (\pol x\,.\,\pol\ek_1)^{-1}
\begin{pmatrix}
(\pol x_s\,.\,\pol\ek_1)^2\,\cos\theta \\
(\pol x_s\,.\,\pol\ek_1)\,\pol x_s\,.\,\pol\ek_2 \\
(\pol x_s\,.\,\pol\ek_1)^2\,\sin\theta 
\end{pmatrix}
- (\pol x\,.\,\pol\ek_1)^{-1}
\begin{pmatrix} \cos\theta \\ 0 \\ \sin\theta \end{pmatrix}
\nonumber \\ & 
= \varkappa\,
\begin{pmatrix}
\pol\nu\,.\,\pol\ek_1\,\cos\theta \\
\pol\nu\,.\,\pol\ek_2 \\
\pol\nu\,.\,\pol\ek_1\,\sin\theta 
\end{pmatrix}
- (\pol x\,.\,\pol\ek_1)^{-1}
\begin{pmatrix}
(\pol x_s\,.\,\pol\ek_2)^2\,\cos\theta \\
- (\pol x_s\,.\,\pol\ek_1)\,\pol x_s\,.\,\pol\ek_2 \\
(\pol x_s\,.\,\pol\ek_2)^2\,\sin\theta 
\end{pmatrix} %\nonumber \\ & 
=\varkappa\,
\begin{pmatrix}
(\pol\nu\,.\,\pol\ek_1)\,\cos\theta \\
\pol\nu\,.\,\pol\ek_2 \\
(\pol\nu\,.\,\pol\ek_1)\,\sin\theta 
\end{pmatrix}
- \frac{\pol x_s\,.\,\pol\ek_2}{\pol x\,.\,\pol\ek_1}
\begin{pmatrix}
\pol x_s\,.\,\pol\ek_2\,\cos\theta \\
- \pol x_s\,.\,\pol\ek_1 \\
\pol x_s\,.\,\pol\ek_2\,\sin\theta 
\end{pmatrix} \nonumber \\ & 
= \left(\varkappa - \frac{\pol\nu\,.\,\pol\ek_1}{\pol x\,.\,\pol\ek_1}
\right)
\begin{pmatrix}
\pol\nu\,.\,\pol\ek_1\,\cos\theta \\
\pol\nu\,.\,\pol\ek_2 \\
\pol\nu\,.\,\pol\ek_1\,\sin\theta 
\end{pmatrix} %\nonumber \\ & 
= \left(\varkappa - \frac{\pol\nu\,.\,\pol\ek_1}{\pol x\,.\,\pol\ek_1}
\right) \pol\nu_{\mathcal{S}} \,.
\label{eq:LBSvecy}
\end{align}

\end{appendix}

\noindent
{\large\bf Acknowledgements}\\
The authors gratefully acknowledge the support 
of the Regensburger Universit\"atsstiftung Hans Vielberth.

%\bibliographystyle{model1b-num-names}
%\bibliography{refs}

\providecommand\noopsort[1]{}\def\soft#1{\leavevmode\setbox0=\hbox{h}\dimen7=\ht0\advance
  \dimen7 by-1ex\relax\if t#1\relax\rlap{\raise.6\dimen7
  \hbox{\kern.3ex\char'47}}#1\relax\else\if T#1\relax
  \rlap{\raise.5\dimen7\hbox{\kern1.3ex\char'47}}#1\relax \else\if
  d#1\relax\rlap{\raise.5\dimen7\hbox{\kern.9ex \char'47}}#1\relax\else\if
  D#1\relax\rlap{\raise.5\dimen7 \hbox{\kern1.4ex\char'47}}#1\relax\else\if
  l#1\relax \rlap{\raise.5\dimen7\hbox{\kern.4ex\char'47}}#1\relax \else\if
  L#1\relax\rlap{\raise.5\dimen7\hbox{\kern.7ex
  \char'47}}#1\relax\else\message{accent \string\soft \space #1 not
  defined!}#1\relax\fi\fi\fi\fi\fi\fi}

\end{document}